\documentclass[11pt,letterpaper]{amsart}

\usepackage[margin=1in]{geometry}
\usepackage{amsmath, bm, bbm, amssymb, amsfonts, mathtools, eucal}
\usepackage{graphicx}
\usepackage{setspace}
\usepackage[shortlabels]{enumitem}
\usepackage{subcaption}
\usepackage{multicol}
\usepackage[normalem]{ulem} 

\newtheorem{theorem}{Theorem}[section]
\newtheorem{proposition}[theorem]{Proposition}
\newtheorem{lemma}[theorem]{Lemma}
\newtheorem{corollary}[theorem]{Corollary}

\theoremstyle{definition}
\newtheorem{definition}[theorem]{Definition}
\newtheorem{question}{Question}
\newtheorem{example}[theorem]{Example}
\newtheorem{open}{Open Problem}

\theoremstyle{remark}

\usepackage{hyperref,xcolor}
\hypersetup{
colorlinks,
linkcolor={red!50!black},
citecolor={blue!50!black},
urlcolor={blue!80!black}
}


\DeclareMathOperator*{\argmin}{argmin}

\DeclareMathOperator{\tv}{\scriptscriptstyle TV}
\DeclareMathOperator{\kl}{\scriptscriptstyle KL}
\DeclareMathOperator{\js}{\scriptscriptstyle JS}
\DeclareMathOperator{\hl}{\scriptscriptstyle H}
\DeclareMathOperator{\lc}{\scriptscriptstyle LC}
\DeclareMathOperator{\D}{D}
\DeclareMathOperator{\proj}{proj}

\DeclareMathOperator{\diag}{diag}
\DeclareMathOperator{\vc}{\scriptscriptstyle VC}

\newcommand{\p}{{\scriptscriptstyle+}}
\newcommand{\tp}{{\scriptscriptstyle\mathsf{T}}}
\newcommand{\1}{\mathbbm{1}}
\renewcommand{\d}{\mathrm{d}} 
\newcommand{\Cint}{\mathsf{C}\!\!\!\!\!\! \int}

\let\latexcirc=\circ
\newcommand{\ccirc}{\mathbin{\mathchoice
  {\xcirc\scriptstyle}
  {\xcirc\scriptstyle}
  {\xcirc\scriptscriptstyle}
  {\xcirc\scriptscriptstyle}
}}
\newcommand{\xcirc}[1]{\vcenter{\hbox{$#1\latexcirc$}}}
\let\circ\ccirc

\begin{document}
\title{Glivenko--Cantelli for $f$-Divergence}
\author[H.~Wang]{Haoming~Wang}
\address{Computational and Applied Mathematics Initiative, Department of Statistics,
University of Chicago, Chicago, IL 60637}
\email{haomingwang@uchicago.edu}
\author[L.-H.~Lim]{Lek-Heng~Lim}
\address{Computational and Applied Mathematics Initiative, Department of Statistics,
University of Chicago, Chicago, IL 60637}
\email{lekheng@uchicago.edu}

\begin{abstract}
We extend the celebrated Glivenko--Cantelli theorem, sometimes called the fundamental theorem of statistics, from its standard setting of total variation distance to all $f$-divergences. A key obstacle in this endeavor is to define $f$-divergence on a subcollection of a $\sigma$-algebra that forms a $\pi$-system but not a $\sigma$-subalgebra. This is a side contribution of our work. We will show that this notion of $f$-divergence on the $\pi$-system of rays preserves nearly all known properties of standard $f$-divergence, yields a novel integral representation of the Kolmogorov--Smirnov distance, and has a Glivenko--Cantelli theorem. We will also discuss the prospects of a Vapnik--Chervonenkis theory for $f$-divergence.
\end{abstract}

\keywords{probability measures, Kolmogorov--Smirnov distance, total variation distance, $f$-divergence, Glivenko--Cantelli theorem}

\subjclass{28A33, 28A50, 46E27, 60E05, 60E15, 60F17, 94A17}
\maketitle

\section{Introduction}\label{sec:intro}

The Glivenko--Cantelli theorem \cite{glivenko1933, cantelli1933} is a cornerstone of empirical process theory. It is likely the best known statement regarding the asymptotic behavior of stochastic processes formed by empirical measures \cite{Dudley2024}. It is also crucial in nonparametric statistics and forms the basis for statistical consistency in many estimation problems \cite{Dudley2024}. In the Kolmogorov--Smirnov test, the theorem ensures that the test statistic has desirable asymptotic properties \cite{Shorack2009}. In statistical resampling methods like the bootstrap, it guarantees that the empirical distribution derived from resampled data will approximate the true distribution as the number of samples grows \cite{Efron1993}. Because of its many central roles, the Glivenko--Cantelli theorem is often called the fundamental theorem of statistics \cite{Taylor1997, Henze2024, Dehling2011, Schumacher2017}. 

In machine learning, the Glivenko--Cantelli theorem can be extended to the Vapnik--Chervonenkis theorem \cite{VapnikChervonenkis1971}, also known as the fundamental theorem of learning theory \cite{shalev2014}. It is used to show the consistency of the principle of empirical risk minimization (ERM) \cite{Vapnik1998} and to derive bounds on generalization error by ensuring the difference between empirical and true distributions diminishes with larger samples, aiding in over-fitting control \cite{Bousquet2002}.

Given its pivotal role in both statistics and machine learning, it is surprising that the Glivenko--Cantelli theorem is somewhat limited in scope---it only works with the \emph{total variation norm} restricted to a Glivenko--Cantelli class. Modern AI applications, on the other hand, have ushered in a plethora of alternative distances between two probability distributions $\mu$ and $\nu$, most of them easier to compute than the total variation norm. The most prominent of these are the $f$-divergences
\begin{equation}\label{eq:f}
\D_f(\mu \Vert \nu) = \int_\Omega f\Bigl(\frac{\d\mu}{\d\nu}\Bigr)\, \d\nu = \int_\Omega f\bigl(g(x)\bigr)\, \d\nu(x),
\end{equation}
where $g$ is the Radon--Nikodym derivative with $\d\mu(x)= g(x)\, \d\nu(x)$. The total variation norm is itself an $f$-divergence with  $f(t) = \lvert t - 1 \rvert/2$  but many other $f$-divergence with yet other $f$'s have played a prominent role in important AI applications recently. In \cite{KL-kingma2014}, the Kullback--Leibler (KL) divergence is used to regularize the posterior distribution in the Variational Autoencoder (VAE), aligning it with a chosen prior. In \cite{JS-goodfellow2014}, Generative Adversarial Networks (GANs) are trained by minimizing the Jensen--Shannon (JS) divergence between the model and real data distributions. In \cite{Renyi-li2016}, R\'{e}nyi $\alpha$-divergences provide a smooth interpolation from the evidence lower bound to the log marginal likelihood that is controlled by the parameter $\alpha$. The goal of our article is to generalize the Glivenko--Cantelli theorem to any $f$-divergence, restricted to a Glivenko--Cantelli class defined with respect to this $f$-divergence.

At this point, one may wonder about Wasserstein metrics, another class of distances between probability distributions that rivals $f$-divergences in importance and popularity. But as was shown in \cite{bolley2007, liu2019}, there is no equivalent of Glivenko--Cantelli theorem  for $p$-Wasserstein metric. Such a result will always be trivial as the ``Glivenko--Cantelli class'' with respect to any $p$-Wasserstein metric is just the entire Borel $\sigma$-algebra: see \cite{bolley2007} for $p \in [1,\infty)$ and \cite{liu2019} for $p = \infty$. On the other hand,  the main contribution of our article shows that there is a natural, nontrivial generalization of the Glivenko--Cantelli theorem to $f$-divergences of which  the special case $f(t) = \lvert t - 1 \rvert/2$ gives us the classical Glivenko--Cantelli theorem.

We remind the reader of this classical version, taking the opportunity to set notations, define basic notions, and establish some facts for later use. Let $(\Omega,\Sigma,P)$ be a probability space and $X : \Omega \to \mathbb{R}$ be a random variable. Let  $\nu$ be the distribution of $X$, i.e., the pushforward measure of $P$ by $X$. Thus $\nu$ is a probability measure on the real Borel space $(\mathbb{R},\mathcal{B})$. For $X_1, \dots, X_n\sim \nu$ independent and identically distributed  random variables on the probability space $(\mathbb{R},\mathcal{B},\nu)$, the \emph{empirical distribution} is the random measure
\[
\nu_n(A,\omega) \coloneqq \frac{1}{n}\sum_{i=1}^{n} \1_{\{X_i(\omega)\in A\}},
\]
defined for any $A\in\mathcal{B}$ and $\omega\in\Omega$. For a fixed $\omega\in\Omega$, $\nu_n(\cdot,\omega)$ is a probability measure on $(\mathbb{R},\mathcal{B})$; and for a fixed $A\in\mathcal{B}$, $\nu_n(A) \coloneqq \nu_n(A,\cdot)$ is a random variable on $\Omega$. 

As is well known, the strong law of large number states that as $n \to \infty$,
\begin{equation}
\tag*{SLLN:} \text{for each fixed }A, \; \nu_n(A) - \nu(A) \to 0 \text{ almost surely.}
\end{equation}
The Glivenko--Cantelli theorem may be viewed as an attempt to replace the qualifier  ``for each fixed $A$''  by ``for all $A \in \mathcal{C}$'' where  $\mathcal{C}$ is an appropriately chosen class of subsets of $\Omega$ called a Glivenko--Cantelli class \cite{Shorack2009}, while maintaining the property that as $n \to \infty$,
\begin{equation}
\tag*{GC:} \sup_{ A \in \mathcal{C}} \, \bigl( \nu_n(A) - \nu(A) \bigr) \to 0  \text{ almost surely.}
\end{equation}
The following example shows that one cannot simply take $\mathcal{C} = \mathcal{B}$, the Borel $\sigma$-algebra.
\begin{example}\label{ex:tv}
Let $\nu$ be the standard Gaussian distribution. Let $X_1, \dots, X_n\sim \nu$ be independent and identically distributed random variables with realizations $x_1,\dots,x_n$. If $A=\{x_1,\dots, x_n\}\in\mathcal{B}$, then $\nu_n(A)=1$ but $\nu(A)=0$. Thus $\sup_{A\in\mathcal{B}}\bigl(\nu_n(A)-\nu(A)\bigr) = 1$ for all $n$ and $\mathcal{B}$ is not a Glivenko--Cantelli class.
\end{example}
One might think that $\mathcal{B}$ is too big but even if we choose a much smaller $\mathcal{C}$, it could still fail to be a Glivenko--Cantelli class, as the following example from \cite{Shortt1984} shows.
\begin{example}[Shortt]\label{ex:gc}
Let $\nu$ be the uniform measure on $[0,1]$ and $\mathcal{C}$ be the collection of all finite subset of $[0,1]$. Let $X_1, \dots, X_n\sim \nu$ be independent and identically distributed random variables with realizations $x_1,\dots,x_n$. If $A = \{x_1,\dots, x_n\}\in\mathcal{C}$, then $\nu_n(C)=1$ but $\nu(C)=0$. Thus $\sup_{A\in\mathcal{C}}\bigl(\nu_n(A)-\nu(A)\bigr) = 1$ for all $n$ and  $\mathcal{C}$ is not a Glivenko--Cantelli class.
\end{example}
So the existence of (nontrivial) Glivenko--Cantelli classes is not obvious. The Glivenko--Cantelli theorem shows that they indeed exists, with the class of rays the original \cite{glivenko1933, cantelli1933} and best known example.
\begin{definition}[Rays]
The class of \emph{open rays} and the class of \emph{closed rays} are
\[
\mathcal{R}_{\mathrm{o}}=\{(-\infty,a): a\in\mathbb{R}\}\cup\{\varnothing, \mathbb{R}\}, \quad
\mathcal{R}_{\mathrm{c}}=\{(-\infty,a]: a\in\mathbb{R}\}\cup\{\varnothing, \mathbb{R}\}
\]
respectively. Their union is the class of \emph{rays}, denoted
\[
\mathcal{R} = \mathcal{R}_{\mathrm{c}}\cup \mathcal{R}_{\mathrm{o}}.
\]
\end{definition}
These are all $\pi$-systems \cite[p.~202]{Dynkin1965} but not $\sigma$-algebras. In this article, we restrict ourselves to ``left'' rays, i.e., where the left-end point of the interval is always $-\infty$. In topology, $\mathcal{R}_{\mathrm{o}}$ is called the left-ray topology or left-order topology.

Glivenko \cite{glivenko1933} showed that $\mathcal{R}_{\mathrm{c}}$ is a Glivenko--Cantelli class  whereas Cantelli \cite{cantelli1933} showed that $\mathcal{R}_{\mathrm{o}}$ is a Glivenko--Cantelli class but we will soon see that it makes no difference whether we use $\mathcal{R}_{\mathrm{c}}$, $\mathcal{R}_{\mathrm{o}}$, or $\mathcal{R}$; all three statements are equivalent. Obviously, the name ``Glivenko--Cantelli class'' came about much later \cite{Shorack2009}, earlier statements of the result were more of the following form:
\begin{theorem}[Glivenko--Cantelli]\label{thm:GC}
Let $\nu$ be a Borel probability measure and $\nu_n$ be the corresponding empirical measure, $n \in \mathbb{N}$. Then, almost surely,
\begin{equation}\label{eq:1}
\sup_{A\in\mathcal{R}}\, \bigl( \nu_n(A)-\nu(A) \bigr) \to 0.
\end{equation}
\end{theorem}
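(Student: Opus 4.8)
The plan is to reduce \eqref{eq:1} to a uniform statement about distribution functions and then upgrade the pointwise strong law of large numbers to a uniform one by a finite discretization. Introduce the cumulative distribution function $F(a) = \nu\bigl((-\infty,a]\bigr)$ together with its empirical counterpart $F_n(a) = \nu_n\bigl((-\infty,a]\bigr) = \frac1n\sum_{i=1}^n \1_{\{X_i \le a\}}$. Since every $A \in \mathcal{R}$ is one of $(-\infty,a]$, $(-\infty,a)$, $\varnothing$, $\mathbb{R}$, and the last two contribute nothing to $\nu_n(A) - \nu(A)$, the quantity in \eqref{eq:1} equals
\[
\max\Bigl\{\,\sup_{a\in\mathbb{R}}\bigl(F_n(a) - F(a)\bigr),\ \sup_{a\in\mathbb{R}}\bigl(F_n(a^-) - F(a^-)\bigr)\Bigr\}.
\]
Because this is dominated by $\sup_a \lvert F_n(a) - F(a)\rvert$ (and the left-limit version is handled by the identical argument), it suffices to prove that $\sup_a \lvert F_n(a) - F(a)\rvert \to 0$ almost surely.

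First I would record the pointwise input. For each fixed $a$ the random variables $\1_{\{X_i \le a\}}$ are i.i.d.\ Bernoulli with mean $F(a)$, so SLLN gives $F_n(a) \to F(a)$ almost surely, and applying it to $\1_{\{X_i < a\}}$ gives $F_n(a^-) \to F(a^-)$ almost surely. The difficulty is that each such convergence holds on an $a$-dependent almost-sure event, and an uncountable intersection of full-measure events need not have full measure; this is precisely the mechanism behind Examples~\ref{ex:tv} and \ref{ex:gc}, where the supremum is pinned at $1$ even though every fixed coordinate behaves well.

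To defeat this I would exploit monotonicity of $F$ and $F_n$ through a finite grid. Fix $\epsilon > 0$ and choose $-\infty = t_0 < t_1 < \dots < t_k = +\infty$ with $F(t_j^-) - F(t_{j-1}) \le \epsilon$ for every $j$; this is possible because $F$ is nondecreasing with limits $0$ and $1$, inserting a grid point at each location where $F$ jumps by more than $\epsilon$. For $a \in [t_{j-1}, t_j)$ monotonicity gives the sandwich
\[
F_n(t_{j-1}) - F(t_j^-) \;\le\; F_n(a) - F(a) \;\le\; F_n(t_j^-) - F(t_{j-1}),
\]
and since $F(t_j^-) - F(t_{j-1}) \le \epsilon$ this yields, uniformly in $a$,
\[
\sup_{a\in\mathbb{R}} \bigl\lvert F_n(a) - F(a) \bigr\rvert \;\le\; \max_{0 \le j \le k} \Bigl( \bigl\lvert F_n(t_j) - F(t_j) \bigr\rvert + \bigl\lvert F_n(t_j^-) - F(t_j^-) \bigr\rvert \Bigr) + \epsilon.
\]

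Finally I would assemble the pieces. The right-hand side above involves only the finitely many points $t_0,\dots,t_k$, so by the pointwise law of large numbers and a finite union bound over the corresponding null sets it converges to $\epsilon$ almost surely; hence $\limsup_n \sup_a \lvert F_n(a) - F(a)\rvert \le \epsilon$ almost surely. Taking $\epsilon = 1/m$ for $m \in \mathbb{N}$ and intersecting the countably many almost-sure events produced this way delivers $\sup_a \lvert F_n(a) - F(a)\rvert \to 0$ almost surely, which is the claim. I expect the main obstacle to be the bookkeeping of null sets---fixing the $\epsilon$-grid before the exceptional set and taking only a countable intersection at the end---so that the final almost-sure event is genuinely independent of $a$; this is exactly the step that fails for the pathological classes of Examples~\ref{ex:tv} and \ref{ex:gc}.
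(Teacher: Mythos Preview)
Your argument is the standard textbook proof of the classical Glivenko--Cantelli theorem, and it is correct: the reduction to distribution functions, the $\epsilon$-grid exploiting monotonicity of $F$, the sandwich inequality, the strong law at finitely many grid points, and the countable intersection over $\epsilon=1/m$ are all sound. One small remark: once you have $\sup_a\lvert F_n(a)-F(a)\rvert\to 0$, the open-ray case is automatic, since $\lvert F_n(a^-)-F(a^-)\rvert=\lim_{b\uparrow a}\lvert F_n(b)-F(b)\rvert\le\sup_b\lvert F_n(b)-F(b)\rvert$; you need not run the argument a second time.

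As for comparison with the paper: the paper does not supply its own proof of Theorem~\ref{thm:GC}. It is stated there as the classical result of Glivenko and Cantelli, with citations to the original 1933 papers, and is used as background for the paper's genuine contributions (the $f$-divergence over rays and Theorems~\ref{thm:GC1}--\ref{thm:GC2}). So there is nothing in the paper to compare your proof against; you have simply reconstructed the classical argument that the paper takes for granted.
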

There is no substantive difference if we instead use $\mathcal{R}_{\mathrm{o}}$ or $\mathcal{R}_{\mathrm{c}}$ in place of $\mathcal{R}$ in Theorem~\ref{thm:GC} as 
\begin{equation}\label{eq:KS}
\sup_{A\in \mathcal{R}_{\mathrm{c}}} \bigl( \mu(A)-\nu(A) \bigr) = \sup_{A\in \mathcal{R}_{\mathrm{o}}} \bigl( \mu(A)-\nu(A) \bigr) = \sup_{A\in\mathcal{R}} \,\bigl( \mu(A)-\nu(A) \bigr)
\end{equation}
for any probability measures $\mu$ and $\nu$ on $(\mathbb{R},\mathcal{B})$, a fact that follows easily  from the continuity of probability measures. The common value in \eqref{eq:KS} defines a distance between probability measures called the Kolmogorov--Smirnov distance, although it is usually defined in terms of $\mathcal{R}_{\mathrm{c}}$  \cite{kelbert2023}.

The \emph{total variation distance} is defined by	
\begin{equation}\label{eq:tv}
\D_{\tv}(\mu \Vert \nu) =\sup_{A\in \mathcal{B}} \, \bigl( \mu(A)-\nu(A)\bigr).	
\end{equation}
So the Kolmogorov--Smirnov distance is just the total variation distance ``restricted'' to $\mathcal{R} \subseteq \mathcal{B}$, i.e., a partial variation over a smaller class of sets than the full Borel $\sigma$-algebra. As we have alluded to earlier, the total variation is an example of $f$-divergence, an extensive class of distance between probability measures. Note that whether we take absolute values or not in \eqref{eq:tv} makes no difference to the value because $A\in \mathcal{B}$ iff $A^\mathsf{c} \in \mathcal{B}$.  Throughout this article, an integral sign $\int$ unadorned with upper and lower limits is taken to mean $\int_{\mathbb{R}}$.
\begin{definition}[$f$-divergence]\label{def:f}
Let $f : \mathbb{R} \to \mathbb{R}$ be a continuous convex function with $f(1)=0$. Let $\mu$ and $\nu$ be probability measures on $(\mathbb{R},\mathcal{B})$ be such that $\mu\ll \nu$, i.e., $\mu$ is absolutely continuous with respect to $\nu$, so that we may speak of Radon--Nikodym derivative $\d\mu / \d\nu$. Then the $f$-divergence \cite{renyi1961, Csiszar1964, Morimoto1963, Ali1966} between $\mu$ and $\nu$ is given by
\begin{equation}\label{eq:f}	
\D_f(\mu \Vert \nu) = \int f\Bigl(\frac{\d\mu}{\d\nu}\Bigr)\,\d\nu.
\end{equation}
\end{definition}
Different choices of $f$ yield various well-known divergences, including
Kullback--Leibler \cite{Bishop2006}, 
Le~Cam \cite{LeCam1986},
Jensen--Shannon \cite{Lin1991}, 
Jeffreys \cite{Jeffreys1998}, 
Chernoff \cite{Chernoff1952}, 
Pearson $\chi^2$ \cite{Pearson1992}, 
Hellinger squared \cite{Hellinger1909},
exponential \cite{Stanislav2017}, and 
alpha--beta \cite{Eguchi1985}
divergences, and so on. For $f(t)=\lvert t-1 \rvert /2$, we get \cite{Tsybakov2009}:
\[
\int  \frac{1}{2}\Bigl\lvert\frac{\d\mu}{\d\nu}-1\Bigr\rvert\,\d\nu  = \sup_{A\in \mathcal{B}} \, \bigl( \mu(A)-\nu(A) \bigr),
\]
the total variation distance. To obtain a Glivenko--Cantelli theorem for an arbitrary $f$-divergence, we first need an analog of Kolmogorov--Smirnov distance corresponding to an $f$-divergence.
\begin{question}\label{ques:1}
Is there a notion of $f$-divergence over $\mathcal{R}$ for general $f$ that (a) reduces to the Kolmogorov--Smirnov distance in \eqref{eq:KS} when $f(t)=\lvert t-1 \rvert /2$; and (b) reduces to the standard $f$-divergence in \eqref{eq:f} when $\mathcal{R}$ is replaced by $\mathcal{B}$?
\end{question}
The answer to Question~\ref{ques:1} is yes, provided by what we will call an $f$-divergence over $\mathcal{R}$ and denoted $\D_f^\mathcal{R}(\mu \Vert \nu)$. We will establish the existence of $f$-divergence over $\mathcal{R}$ in Section~\ref{sec:Rfdiv} and develop some of its properties in Section~\ref{sec:ineq}, showing in particular that this new notion preserves some of the best-known relations between different $f$-divergences.  In Section~\ref{sec:GC}, we will prove the result claimed in the title of our article, namely, $\D_f^\mathcal{R}(\nu_n \Vert \nu)$ converges to zero almost surely. Note that when $f(t)=\lvert t-1 \rvert /2$, this reduces to the Glivenko--Cantelli theorem in Theorem~\ref{thm:GC}.

As we will see, it takes some effort to establish the notion of $f$-divergence over $\mathcal{R}$.  For the total variation distance, getting from  \eqref{eq:tv} to \eqref{eq:KS} is a matter of simply replacing $\mathcal{B}$ by $\mathcal{R}$. But for general $f$-divergence, $\mathcal{B}$ does not even appear directly in \eqref{eq:f} and it is no longer a matter of simply  replacing $\mathcal{B}$ by  $\mathcal{R}$. Indeed we know of no straightforward answer to Question~\ref{ques:1}.

In particular, we emphasize that the groundbreaking work of Vapnik and Chervonenkis \cite{VapnikChervonenkis1971} sheds no light on whether the Glivenko--Cantelli theorem remains true when total variation distance is replaced by an $f$-divergence. They showed that if the \textsc{vc}-dimension of a class $\mathcal{C} \subseteq \Sigma$, i.e., the maximum number of points that $\mathcal{C}$ can shatter, is finite, then $\mathcal{C}$ is a Glivenko--Cantelli class, i.e., $\sup_{A\in\mathcal{C}}\bigl(\nu_n(A)-\nu(A)\bigr)\to 0$ almost surely. \label{pg:VC} More generally, existing Vapnik--Chervonenkis theory is still cast entirely in terms of the total variation distance  \cite{Vapnik1998}.  Indeed, our work in this article suggests that a ``Vapnik--Chervonenkis theory for $f$-divergence'' is likely within reach. We will establish some initial results in Section~\ref{sec:VC}.

Lastly, readers familiar with Choquet integrals \cite{choquet1953, Denneberg1994} may think that it provides a plausible answer to Question~\ref{ques:1}, as we did initially; we will explain why it does not in Section~\ref{sec:cho}.

\section{Notations and terminologies}

\subsection{Functions} We reserve the letter $f$ for $f$-divergence in this article. Other functions will be denoted $g$, $h$, $\varphi$, $\psi$. Depending on context, the notation  $X_k\uparrow X$ could mean an increasing sequence $X_1 \le X_2 \le \dots \le X_k \in \mathbb{R}$ converging to a real value $X \in \mathbb{R}$ or a nested sequence of sets  $X_1 \subseteq X_2 \subseteq \dots \subseteq X_k \subseteq \mathbb{R}$ converging to a set $X \subseteq \mathbb{R}$. Similarly for $X_k\downarrow X$.

We write $\lVert \, \cdot \, \rVert_p$ for the $L^p$-norm for all $p \in [1,\infty]$ except $p = 2$. The $L^2$-norm will just be denoted as $\lVert \, \cdot \,  \rVert$. We write $L^2(\nu) \coloneqq L^2(\mathbb{R},\mathcal{B},\nu)$ for the space of measurable functions with finite $L^2$-norm and
\[
L^2_\p (\nu)\coloneqq \{g\in L^2(\nu): g\ge 0\}
\]
for the cone of functions nonnegative almost everywhere. As usual, by ``function'' we mean an equivalence class of functions that differ at most on a measure-zero set and conditions like ``$h \ge g$'' will always be in the almost everywhere sense without specification. On the other hand, we will specify the measure every time since we often have to deal with multiple measures. 

Composition of functions will always be denoted by $\circ$ and pointwise product of real-valued functions will be denoted by $\cdot$ if necessary for emphasis or otherwise left unmarked.

Let $\varnothing \ne M\subseteq L^2(\nu)$ be a closed convex subset. The \emph{metric projection} onto $M$ is the operator $\proj_M: L^2(\nu) \to L^2(\nu)$ that takes $g \in  L^2(\nu)$ to the closest point $g_* \in M$, i.e., $\lVert g - g_* \rVert = \min_{h \in M} \lVert h - g \rVert$.
The  existence and uniqueness of $g_*$ is guaranteed by the conditions on $M$ and so $\proj_M$ is well-defined. It is also well-known that $\proj_M$ is continuous \cite[p.~52]{Alimov2021}.

\subsection{Sequences} We will remind the readers of the three notions of convergence used in our article: For a sequence of measurable functions $g_n$ converging to $g$ in $L^2(\nu)$. The convergence is said to be (i) $\nu$-almost surely if $g_n$ converges to $g$ pointwise except on a $\nu$-null set; (ii) in $\nu$-measure if $\lim_{n\to\infty} \nu(\lvert g_n-g \rvert>\varepsilon)=0$ for all $\varepsilon>0$; (iii) in $L^2$-norm if $\lVert g_n-g \rVert \to 0$. The following lemma, adapted from \cite[pp.~80--83]{Cohn2013}, summarizes the relations between them.

\begin{lemma}\label{lem:modes-of-cvg} 
Let $\nu$ be a Borel probability measure.
\begin{enumerate}[\upshape (a)]
\item If $g_n\to g$ in $L^2$-norm, then $g_n\to g$ in $\nu$-measure.
\item If $g_n\to g$ in $\nu$-measure, then there is a subsequence $(g_{n_k})$ such that $g_{n_k}\to g$ almost surely.
\item If $g_n\to g$ almost surely and there exists $g\in L^2(\nu)$ that dominates $g_n$, then $g_n\to g$ in $L^2$-norm.
\end{enumerate}
\end{lemma}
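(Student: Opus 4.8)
The plan is to prove the three implications separately, since the lemma packages together three classical facts (as the authors note, drawn from Cohn), each of which follows from a standard tool: Chebyshev's inequality for (a), the first Borel--Cantelli lemma for (b), and the dominated convergence theorem for (c). None of the three requires any structure beyond a finite (here, probability) measure, so I would simply assemble the standard arguments.

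For part (a), fix $\varepsilon > 0$ and apply Chebyshev's inequality to the nonnegative function $\lvert g_n - g \rvert^2$:
\[
\nu\bigl(\lvert g_n - g \rvert > \varepsilon\bigr) \le \varepsilon^{-2} \int \lvert g_n - g \rvert^2 \, \d\nu = \varepsilon^{-2} \lVert g_n - g \rVert^2 \to 0,
\]
where the limit is the hypothesis $\lVert g_n - g \rVert \to 0$. This gives convergence in $\nu$-measure. For part (b), I would extract the subsequence by dyadic thinning and close with Borel--Cantelli: using convergence in measure, choose indices $n_1 < n_2 < \cdots$ so that $\nu\bigl(\lvert g_{n_k} - g \rvert > 1/k\bigr) < 2^{-k}$ for each $k$. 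Since $\sum_k 2^{-k} < \infty$, the first Borel--Cantelli lemma shows that, almost surely, only finitely many of the events $\{\lvert g_{n_k} - g \rvert > 1/k\}$ occur; on that full-measure set one has $\lvert g_{n_k} - g \rvert \le 1/k$ eventually, so $g_{n_k} \to g$ almost surely.

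For part (c) I would reduce to the dominated convergence theorem. (I read the hypothesis as asserting the existence of a dominating $G \in L^2(\nu)$ with $\lvert g_n \rvert \le G$ almost surely for all $n$, the limit being the fixed $g$.) From $g_n \to g$ almost surely we get $\lvert g_n - g \rvert^2 \to 0$ almost surely; and passing to the pointwise limit in $\lvert g_n \rvert \le G$ yields $\lvert g \rvert \le G$, whence
\[
\lvert g_n - g \rvert^2 \le \bigl(\lvert g_n \rvert + \lvert g \rvert\bigr)^2 \le (2G)^2 = 4G^2 \in L^1(\nu),
\]
the integrability coming from $G \in L^2(\nu)$. Dominated convergence then gives $\int \lvert g_n - g \rvert^2 \, \d\nu \to 0$, i.e.\ $\lVert g_n - g \rVert \to 0$.

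The only point requiring genuine care---the main ``obstacle,'' such as it is---is the domination bookkeeping in part (c): one must exhibit a \emph{single} $\nu$-integrable majorant for the whole sequence $\lvert g_n - g \rvert^2$, which is precisely where one uses that the almost-sure limit $g$ inherits the bound $G$, so that $4G^2$ serves as the dominating function. Parts (a) and (b) are otherwise direct applications of Chebyshev's inequality and the Borel--Cantelli lemma respectively.
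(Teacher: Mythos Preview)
Your proof is correct and follows the standard textbook route (Chebyshev, Borel--Cantelli, dominated convergence). The paper does not give its own proof of this lemma; it simply cites \cite[pp.~80--83]{Cohn2013}, so there is nothing to compare against beyond noting that your argument is exactly the one found there. Your reading of the awkwardly phrased hypothesis in (c)---that the dominating function is a separate $G\in L^2(\nu)$ rather than the limit $g$ itself---is the intended one.
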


We will also need an observation that follows from $\lvert \int g_n-g\,\d\nu\rvert \le \int \lvert g_n-g \rvert\,\d\nu\le \bigl( \int (g_n-g)^p\,\d\nu\bigr)^{1/p}$.
\begin{lemma}\label{lem:lp-imply-exch} 
Let $g_n\in L^p(\nu)$. If $g_n\to g$ in $L^p$-norm for some $1\le p<\infty$, then 
\[
\int g\,\d\nu =\lim_{n\to\infty} \int g_n \,\d\nu.
\]
\end{lemma}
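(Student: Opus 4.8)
The plan is to reduce everything to the chain of inequalities already recorded immediately before the statement, and then to verify that each quantity appearing in that chain is well-defined and finite. First I would observe that since $g_n \to g$ in $L^p$-norm with each $g_n \in L^p(\nu)$, the limit $g$ also lies in $L^p(\nu)$, because $L^p(\nu)$ is complete and hence closed under convergence in its own norm. Because $\nu$ is a probability measure, the $L^p$ spaces are nested, $L^p(\nu) \subseteq L^1(\nu)$, so in particular $g$ and every $g_n$ belong to $L^1(\nu)$ and the integrals $\int g\,\d\nu$ and $\int g_n\,\d\nu$ are all finite. This is the only place where the finiteness of $\nu$ is genuinely used.

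Next, by linearity of the integral followed by the triangle inequality, I would write
\[
\Bigl\lvert \int g_n \,\d\nu - \int g \,\d\nu \Bigr\rvert = \Bigl\lvert \int (g_n - g)\,\d\nu \Bigr\rvert \le \int \lvert g_n - g \rvert \,\d\nu = \lVert g_n - g \rVert_1.
\]
The crucial step is then the norm-monotonicity $\lVert g_n - g \rVert_1 \le \lVert g_n - g \rVert_p$, valid on a probability space for every $p \ge 1$; this is precisely the second inequality in the displayed observation preceding the lemma, and it follows from Jensen's inequality applied to the convex map $t \mapsto \lvert t \rvert^p$ (equivalently, from Hölder's inequality pairing $\lvert g_n - g\rvert$ against the constant function $1$ with conjugate exponents $p$ and $p/(p-1)$). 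By hypothesis $\lVert g_n - g \rVert_p \to 0$, so the right-hand side vanishes as $n \to \infty$, and the squeeze forces $\int g_n\,\d\nu \to \int g\,\d\nu$, which is exactly the claim.

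There is essentially no obstacle here: the entire content is the $L^1 \le L^p$ bound on a finite measure space, and everything else is bookkeeping. If I had to single out the one point deserving care, it is confirming that $\int g\,\d\nu$ is finite \emph{before} asserting the limit, since otherwise the difference $\int g_n\,\d\nu - \int g\,\d\nu$ would be meaningless; the nesting $L^p(\nu) \subseteq L^1(\nu)$ settles this, and it is precisely where the assumption that $\nu$ is a probability measure, rather than merely $\sigma$-finite, enters.
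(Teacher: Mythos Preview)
Your proof is correct and follows exactly the paper's approach: the paper derives the lemma directly from the displayed chain $\lvert \int g_n - g\,\d\nu \rvert \le \int \lvert g_n - g\rvert\,\d\nu \le \bigl(\int (g_n - g)^p\,\d\nu\bigr)^{1/p}$, and you simply expand on this by verifying that $g \in L^p(\nu) \subseteq L^1(\nu)$ so that all integrals are finite.
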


\subsection{Measures}\label{sec:meas}

We write $2^\Omega$ for the power set of $\Omega$. A collection of subsets $\mathcal{E} \subseteq 2^\Omega$ is called a \emph{$\pi$-system} if it is closed under finite intersections \cite[p.~202]{Dynkin1965}. The following results from \cite[p.~38]{Cohn2013} and \cite[p.~58]{Schilling2017} are reproduced here for easy reference.
\begin{lemma}\label{lem:pi-sys} 
Let $\mu,\nu$ be finite measures on $(\Omega,\Sigma)$ such that $\mu(\Omega)=\nu(\Omega)$. If $\mathcal{A}$ is a $\pi$-system that generates $\Sigma$ and $\mu(A)=\nu(A)$ for any $A\in \mathcal{A}$, then $\mu=\nu$.
\end{lemma}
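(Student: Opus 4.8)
The plan is to deploy Dynkin's $\pi$-$\lambda$ theorem, the canonical tool for promoting agreement on a $\pi$-system to agreement on the generated $\sigma$-algebra. First I would introduce the ``agreement class''
\[
\mathcal{D} \coloneqq \{A \in \Sigma : \mu(A) = \nu(A)\},
\]
and observe that the hypothesis $\mu(A)=\nu(A)$ for all $A \in \mathcal{A}$ says precisely that $\mathcal{A} \subseteq \mathcal{D}$. The goal then reduces to showing $\mathcal{D} = \Sigma$, and since $\mathcal{A}$ generates $\Sigma$, i.e.\ $\sigma(\mathcal{A}) = \Sigma$, it suffices to verify that $\mathcal{D}$ is a $\lambda$-system (Dynkin system): for then the $\pi$-$\lambda$ theorem gives $\sigma(\mathcal{A}) \subseteq \mathcal{D} \subseteq \Sigma$, forcing equality.

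Verifying the three $\lambda$-system axioms for $\mathcal{D}$ is where the hypotheses enter. (i) $\Omega \in \mathcal{D}$ is exactly the assumption $\mu(\Omega) = \nu(\Omega)$. (ii) For closure under proper differences, I take $A, B \in \mathcal{D}$ with $A \subseteq B$ and compute $\mu(B \setminus A) = \mu(B) - \mu(A) = \nu(B) - \nu(A) = \nu(B \setminus A)$; this subtraction is legitimate precisely because $\mu$ and $\nu$ are finite, so no $\infty - \infty$ arises. (iii) For closure under increasing limits, I take $A_n \in \mathcal{D}$ with $A_n \uparrow A$ and invoke continuity from below of both measures, obtaining $\mu(A) = \lim_{n} \mu(A_n) = \lim_{n} \nu(A_n) = \nu(A)$, so $A \in \mathcal{D}$.

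The main obstacle---such as it is for a result of this standard type---is the finiteness requirement, which is not cosmetic: it is exactly what makes the subtraction in step (ii) well-posed, while the accompanying condition $\mu(\Omega) = \nu(\Omega)$ is what places $\Omega$ into $\mathcal{D}$ in step (i). For $\sigma$-finite but infinite measures the naive argument breaks down, which is why the lemma is stated for finite measures. In our intended applications $\mu$ and $\nu$ are probability measures, so $\mu(\Omega) = \nu(\Omega) = 1$ and all three axioms hold without further work. Once $\mathcal{D}$ is confirmed to be a $\lambda$-system containing the $\pi$-system $\mathcal{A}$, the $\pi$-$\lambda$ theorem closes the argument immediately.
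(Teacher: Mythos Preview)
Your proof is correct and is the standard $\pi$-$\lambda$ argument. The paper does not actually prove this lemma: it is stated without proof and attributed to \cite[p.~38]{Cohn2013} and \cite[p.~58]{Schilling2017} as a known result reproduced for reference, and the argument in those sources is precisely the one you give.
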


\begin{lemma}\label{lem:sigma-preimage} 
Let $g:X\to Y$ be any function and $\mathcal{E}\subseteq 2^Y$. Then $\sigma(g^{-1}(\mathcal{E}))=g^{-1}(\sigma(\mathcal{E}))$.
\end{lemma}
Whenever we write $\d\mu / \d\nu$, we implicitly assume that $\mu \ll \nu$.

\section{$f$-divergence over rays}\label{sec:Rfdiv}

The goal of this section is to resolve Question~\ref{ques:1} in the affirmative. The answer is an analogue of $f$-divergence with respect to the class of rays $\mathcal{R}$ that we will call the $f$-divergence over $\mathcal{R}$. This appears only at the end of this section in Definition~\ref{def:Rfdiv}. It will take the groundwork developed over the course of the whole section before we can show that the notion is indeed well-defined. There are two milestones to watch for: We will see in Theorem~\ref{thm:main-theorem} that the construction of such a divergence would work for any class $\mathcal{C} \subseteq \mathcal{B}$ that satisfies a certain ``Radon--Nikodym property.'' We will see in Corollary~\ref{cor:RNP}, which follows from Theorem~\ref{thm:key-lemma}, that $\mathcal{C} = \mathcal{R}$ has this property.

We begin by establishing some basic properties related to $\mathcal{R}$. Throughout this section, $\nu$ denotes a Borel probability measure on $(\mathbb{R},\mathcal{B})$.

\begin{proposition}\label{prop:FR}
The set
\begin{equation}\label{eq:GR}
 G(\mathcal{R}) \coloneqq \bigl\{g\in L^2(\nu) : \{ g>r \} \in \mathcal{R} \text{ for all } r\in\mathbb{R}\bigr\}
\end{equation}
is a closed convex cone in $L^2(\nu)$ comprising all nonincreasing functions in $L^2(\nu)$.
\end{proposition}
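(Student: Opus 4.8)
The plan is to prove the proposition in three stages: first identify $G(\mathcal{R})$ with the set of nonincreasing functions in $L^2(\nu)$, then read off the convex-cone property as a consequence, and finally establish closedness, which is the only analytically substantive part.

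\textbf{Characterization.} For the inclusion of nonincreasing functions into $G(\mathcal{R})$, I would note that if $g$ is nonincreasing then each superlevel set $\{g>r\}$ is downward closed: $x\in\{g>r\}$ and $y<x$ force $g(y)\ge g(x)>r$. Hence $\{g>r\}$ is an interval with left endpoint $-\infty$, and depending on whether the supremum $a=\sup\{g>r\}$ is attained it equals $(-\infty,a)$, $(-\infty,a]$, $\varnothing$, or $\mathbb{R}$, in every case a member of $\mathcal{R}$. Conversely, if every $\{g>r\}\in\mathcal{R}$ and $g$ failed to be nonincreasing, there would be $x<y$ with $g(x)<g(y)$; choosing a rational $r$ with $g(x)<r<g(y)$ puts $y\in\{g>r\}$ but $x\notin\{g>r\}$, contradicting that the ray $\{g>r\}$ is downward closed. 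The only subtlety is that elements of $L^2(\nu)$ are equivalence classes, so $\{g>r\}$ is defined only up to a $\nu$-null set; I would phrase the defining condition as ``$\{g>r\}$ agrees with a ray modulo $\nu$-measure zero'' and record that any such $g$ admits a genuinely nonincreasing representative (constructed below), so the two descriptions agree.

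\textbf{Convex cone.} Granting the characterization, the cone and convexity properties are immediate. A nonnegative multiple of a nonincreasing function is nonincreasing (for $\lambda>0$ one has $\{\lambda g>r\}=\{g>r/\lambda\}$, and $\lambda=0$ returns the constant $0$), and a sum, hence any convex combination, of nonincreasing functions is again nonincreasing; each of these operations preserves membership in $L^2(\nu)$.

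\textbf{Closedness.} Suppose $g_n\in G(\mathcal{R})$ and $g_n\to g$ in $L^2(\nu)$. By Lemma~\ref{lem:modes-of-cvg}(a)--(b), $L^2$-convergence gives convergence in $\nu$-measure, so along a subsequence $g_{n_k}\to g$ $\nu$-almost surely; let $N$ be the $\nu$-null set off which this holds. For $x<y$ in $N^{\mathrm c}$ each $g_{n_k}(x)\ge g_{n_k}(y)$, and letting $k\to\infty$ yields $g(x)\ge g(y)$, so $g$ is nonincreasing on the full-measure set $N^{\mathrm c}$. To obtain an honest nonincreasing representative I would set $\tilde g(x)=\inf\{g(y):y\in N^{\mathrm c},\ y\le x\}$: the defining set grows with $x$, so $\tilde g$ is nonincreasing, and since $g$ is already nonincreasing on $N^{\mathrm c}$ the infimum is attained at $y=x$ for $x\in N^{\mathrm c}$, giving $\tilde g=g$ on $N^{\mathrm c}$, hence $\tilde g=g$ $\nu$-a.e. and $\tilde g\in L^2(\nu)$. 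Therefore $g\in G(\mathcal{R})$ and the set is closed.

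I expect the main obstacle to be the bookkeeping around equivalence classes: making ``$\{g>r\}\in\mathcal{R}$'' precise for an $L^2$-class, and checking that the a.e.\ limit of nonincreasing functions genuinely lands in $G(\mathcal{R})$ rather than being merely nonincreasing off a null set. The construction of $\tilde g$, together with the verification that the degenerate endpoint cases (where the defining infimum is over an empty set) occur only on a $\nu$-null set, is the technical heart; everything else reduces to the elementary stability of nonincreasing functions under nonnegative linear combinations and under almost-everywhere limits.
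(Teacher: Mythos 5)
Your proposal is correct and follows essentially the same route as the paper: the same superlevel-set characterization of $G(\mathcal{R})$ as the nonincreasing functions in $L^2(\nu)$ (downward-closed superlevel sets are rays; the converse by contradiction from $x<y$ with $g(x)<g(y)$), the same routine cone and convexity observation, and the same closedness argument via Lemma~\ref{lem:modes-of-cvg} (passing from $L^2$-convergence to $\nu$-measure convergence and then to an almost surely convergent subsequence). Your explicit construction of a genuinely nonincreasing representative $\tilde{g}$ is a harmless refinement that the paper elides by working with equivalence classes and concluding only that the limit is almost surely nonincreasing.
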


\begin{proof}
We first show that $ G(\mathcal{R})$ is exactly the set of nonincreasing $L^2$-functions.
Let $g$ be such a function and let $r\in\mathbb{R}$. Set $a=\sup\{x:g(x)>r\}$. Any $x\in \{g>r\}$ has $x\le a$ so $\{g>r\}\subseteq (-\infty,a]$. If there is an $x<a$ with $g(x)\le r$, then there must be some  $y>x$ with $g(y)>r\ge g(x)$. This contradicts the assumption that $g$ is nonincreasing. Hence $(\infty,a)\subseteq \{g>r\}\subseteq (-\infty,a]$. It follows that $\{g>r\}\in\mathcal{R}$ and so $g\in G(\mathcal{R})$. Conversely, let $g\in G(\mathcal{R})$. Suppose we have $x<y$ with $g(x)<g(y)$. Then there is some $a$ with $(-\infty,a)\subseteq \{g>g(x)\}\subseteq (-\infty,a]$. Since $g(y)>g(x)$, it follows that $y\in\{g>g(x)\}$, and so $x<y<a$. Consequently, $x\in\{g>g(x)\}$, i.e., $g(x)>g(x)$, a contradiction. Hence $g$ must be nonincreasing.

$G(\mathcal{R})$ is a closed convex cone because nonnegative linear combinations or $L^2$-limits of nonincreasing functions remain nonincreasing. The former is routine. The latter follows from Lemma~\ref{lem:modes-of-cvg}: A sequence $(g_n)$ of nonincreasing functions with  $\lVert g_n-g \rVert \to 0$ must also converge in $\nu$-measure. So there is a subsequence $(g_{n_k})$ that converges to $g$ almost surely. So there is a null set $A$ where $\lim_{k\to\infty}g_{n_k}(x)=g(x)$ for all $x \notin A$. Since each $g_{n_k}$ is nonincreasing, $g$ is almost surely nonincreasing.
\end{proof}
 
An immediate consequence of Proposition~\ref{prop:FR} is that $ G(\mathcal{R})$ has a well-defined metric projection. For a given $g \in L^2(\nu)$, finding $\proj_{ G(\mathcal{R})} g$ is equivalent to finding the nearest nonincreasing function to $g$ in $L^2$-norm.


\begin{definition} 
A collection $\{A_1, \dots, A_n\}$ is called an \emph{ordered partition} of $\mathbb{R}$ if it satisfies the following conditions:
\begin{enumerate}[\upshape (i)]
\item disjoint: $A_i\cap A_j=\varnothing$ for any $i\ne j$;
\item cover: $A_1 \cup \dots \cup A_n =\mathbb{R}$;
\item ordered: $\sup A_i\le \inf A_j$ for each $i\le j$.
\end{enumerate}
An \emph{ordered simple function} $h:\mathbb{R}\to \mathbb{R}$ is a simple function defined on an ordered partition $\{A_1, \dots, A_n\}$.	
\end{definition}
 
It is well-known that any $g\in L^2_\p (\nu)$ can be approximated to arbitrary accuracy by an increasing sequence of nonnegative simple functions $(h_n)$. We will show  in Corollary~\ref{cor:2.7} that $h_n$ can be chosen to be ordered simple functions.
\begin{proposition}\label{prop:gf}
If $g\in L^2_\p (\nu)$ is almost surely continuous, then there exists a monotone increasing sequence of nonnegative ordered simple functions $(h_n)$ that converges pointwise to $g$ almost surely.
\end{proposition}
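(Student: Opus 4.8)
The plan is to construct $(h_n)$ explicitly from a dyadic partition of $\mathbb{R}$ into intervals, assigning to each interval the infimum of $g$ over it; the interval structure keeps every partition ordered, and continuity is exactly what forces these infima to converge pointwise to $g$. As a preliminary reduction, since $g \ge 0$ $\nu$-almost everywhere, I may replace $g$ by $\max(g,0)$, which alters neither its $L^2(\nu)$ equivalence class nor its set of continuity points; thus I assume $g \ge 0$ at every point, guaranteeing that each infimum is a nonnegative real and that the functions I build are genuinely nonnegative.

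For each $n$, I would partition $\mathbb{R}$ into the two tails $(-\infty,-n)$ and $[n,\infty)$ together with the dyadic intervals $[j2^{-n},(j+1)2^{-n})$ for $-n2^n \le j < n2^n$, which tile $[-n,n)$. These pieces are consecutive, so the collection is an ordered partition; I set $h_n \equiv 0$ on the two tails and $h_n \equiv \inf_I g$ on each dyadic piece $I$, which makes $h_n$ a nonnegative ordered simple function. Monotonicity $h_n \le h_{n+1}$ then follows from two mechanisms. Refining from level $n$ to level $n+1$ splits each dyadic interval into two halves, over which the infimum can only increase; and the dyadic region grows from $[-n,n)$ to $[-(n+1),n+1)$, so on the portions of the old tails now covered by genuine intervals the value rises from $0$ to a nonnegative infimum. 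A short case analysis according to where a point lies (the common deep tail, an old tail now tiled by intervals, or an old interval now refined) exhausts $\mathbb{R}$ and gives the inequality everywhere.

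It remains to prove pointwise convergence at each continuity point $x$ of $g$. For $n > \lvert x \rvert$ the point $x$ lies in some dyadic interval $I_n$ of length $2^{-n}$, so $\inf_{I_n} g \le g(x)$ at once, while continuity at $x$ forces $\inf_{I_n} g \ge g(x) - \varepsilon$ as soon as $2^{-n}$ falls below the relevant modulus-of-continuity scale; hence $h_n(x) \to g(x)$. Since the continuity points carry full $\nu$-measure by hypothesis, this yields $h_n \to g$ $\nu$-almost surely, as required. I expect the one genuinely fiddly step to be the bookkeeping for monotonicity across the moving boundary between the dyadic block and the two tails; everything else reduces to the textbook facts that infima grow under refinement and that an infimum over a shrinking interval recovers the value of a continuous function.
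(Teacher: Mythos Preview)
Your proposal is correct and follows essentially the same approach as the paper: construct an ordered partition of $\mathbb{R}$ into a growing block of shrinking intervals flanked by two tails, and define $h_n$ as the infimum of $g$ over each piece. Your choice of the standard dyadic intervals $[j2^{-n},(j+1)2^{-n})$ (rather than the paper's intervals of length $n/2^n$) and your decision to set $h_n\equiv 0$ on the tails are minor implementation differences that, if anything, make the monotonicity verification cleaner since your partitions are genuinely nested.
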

\begin{proof} 
Define the ordered partition
\[
\mathcal{P}_n \coloneqq \Bigl\{\Bigl[\frac{nk}{2^n}, \frac{n(k+1)}{2^n}\Bigr): k=-2^n, -2^n+1, \dots, 2^n-1\Bigr\}\cup \bigl\{(-\infty,-n), [n,\infty)\bigr\}
\]
and the ordered simple function $h_n=\sum_{A\in \mathcal{P}_n} \inf_{x\in A} g(x)\1_A$. Since $g$ is almost surely continuous, we can find a null set $S$ such that for any $\varepsilon > 0$ and $x \in \mathbb{R} \backslash S$, there exist $n \in \mathbb{N}$ and $A \in \mathcal{P}_n$ with $x \in A \subseteq B_\varepsilon(x)$. Consequently,
\[
\inf_{y\in B_\varepsilon(x)}g(y)\le \inf_{y\in A}g(y)=h_n(x)\le g(x),
\]
where $\inf_{y\in B_\varepsilon(x)}g(y)\uparrow g(x)$ as $\varepsilon\downarrow 0$ since $g$ is continuous on $\mathbb{R}\backslash S$. Therefore, $h_n(x)\to g(x)$ almost surely as $n\to\infty$.
For any $n \le m$, we have $A\subseteq B$ for any $A\in\mathcal{P}_m$ and $B\in\mathcal{P}_n$; so $h_n(x)\le h_m(x)$ for all $x\in\mathbb{R}$.
\end{proof}
In Proposition~\ref{prop:gf}, $h_n$ is dominated by $g$; so we also have $h_n\to g$ in $L^2$-norm by Lemma~\ref{lem:modes-of-cvg}.
The almost sure continuity in Proposition~\ref{prop:gf} may sometimes be overly restrictive. We derive an alternative version without this requirement. 
\begin{corollary}\label{cor:2.7} 
Let $g\in L^2_\p (\nu)$. Then there exists a sequence of bounded ordered simple functions $(h_n)$ that converges to $g$ in $L^2$-norm and almost surely.
\end{corollary}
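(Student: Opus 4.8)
The plan is to reduce to the almost-surely-continuous case already settled in Proposition~\ref{prop:gf}, by first approximating $g$ by continuous nonnegative functions and then diagonalizing. First I would invoke the density of continuous functions in $L^2(\nu)$: since $\nu$ is a Borel probability measure on $\mathbb{R}$ (hence regular), the continuous functions are dense in $L^2(\nu)$, so I can pick continuous $\varphi_k$ with $\lVert \varphi_k - g \rVert \to 0$. Because $g \in L^2_\p(\nu)$, replacing each $\varphi_k$ by its nonnegative part $\varphi_k^+ = \max(\varphi_k, 0)$ keeps it continuous and nonnegative while not increasing the distance to $g$, since $\lvert \varphi_k^+ - g \rvert \le \lvert \varphi_k - g \rvert$ pointwise whenever $g \ge 0$. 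So I may assume each $\varphi_k \in L^2_\p(\nu)$ is continuous with $\varphi_k \to g$ in $L^2$-norm.

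Next, for each fixed $k$, Proposition~\ref{prop:gf} together with the remark following it yields a sequence of nonnegative ordered simple functions $(h_{k,n})_n$ with $h_{k,n} \to \varphi_k$ in $L^2$-norm as $n \to \infty$; each $h_{k,n}$ is bounded because the partition $\mathcal{P}_n$ is finite, so $h_{k,n}$ takes only finitely many values. I would then diagonalize: choose $n_k$ so that $\lVert h_{k,n_k} - \varphi_k \rVert < 1/k$, and set $H_k \coloneqq h_{k,n_k}$. The triangle inequality gives
\[
\lVert H_k - g \rVert \le \lVert h_{k,n_k} - \varphi_k \rVert + \lVert \varphi_k - g \rVert < \tfrac{1}{k} + \lVert \varphi_k - g \rVert \to 0,
\]
so $(H_k)$ is a sequence of bounded ordered simple functions converging to $g$ in $L^2$-norm.

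Finally, to upgrade to almost sure convergence I would appeal to Lemma~\ref{lem:modes-of-cvg}: $L^2$-convergence implies convergence in $\nu$-measure by part (a), and by part (b) there is a subsequence $(H_{k_j})$ converging to $g$ almost surely. This subsequence still converges in $L^2$-norm, being a subsequence of an $L^2$-convergent sequence, and it still consists of bounded ordered simple functions, so after relabeling it is the desired sequence.

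The argument is essentially routine once the reduction is in place; the only genuine content is the passage from the general $g$ to a continuous one, which is exactly what the density of continuous functions in $L^2(\nu)$ buys us, so I expect that density statement to be the load-bearing ingredient. The minor technical points to watch are keeping the approximants nonnegative (handled by taking positive parts, using that projection onto $[0,\infty)$ is nonexpansive) and producing a \emph{single} subsequence that simultaneously witnesses both $L^2$ and almost sure convergence, which the two parts of Lemma~\ref{lem:modes-of-cvg} deliver cleanly.
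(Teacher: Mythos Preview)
Your proposal is correct and follows essentially the same route as the paper's proof: approximate $g$ in $L^2$ by a continuous function (the paper uses $C_{\mathrm{c}}(\mathbb{R})$, you use general continuous functions), invoke Proposition~\ref{prop:gf} to pass to ordered simple functions, combine via the triangle inequality, and extract an almost surely convergent subsequence via Lemma~\ref{lem:modes-of-cvg}. Your explicit step of replacing $\varphi_k$ by $\varphi_k^+$ to ensure nonnegativity before applying Proposition~\ref{prop:gf} is a point of extra care that the paper glosses over.
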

\begin{proof}
Given that the space of continuous functions with compact support $C_{\mathrm{c}}(\mathbb{R})$ is dense in $L^2(\nu)$ \cite[p.~354]{Hunter2001}, there is a continuous function $\varphi$ with compact support that approximates $g$ to accuracy $\lVert g-\varphi \rVert <\varepsilon/2$ for any $\varepsilon>0$. On the other hand, the continuous function $\varphi$ is in $L^2(\nu)$; so by Proposition~\ref{prop:gf} there is an ordered simple function $h$ that approximates it to accuracy $\lVert \varphi- h\rVert <\varepsilon/2$. Hence $\lVert h - g\rVert  <\varepsilon$. This shows that for any $g\in L^2_\p (\nu)$, there exists a sequence of ordered simple function $h_n$ such that $h_n\to g$ in $L^2$-norm. By passing through a subsequence if necessary, Lemma~\ref{lem:modes-of-cvg} ensures $h_n\to g$ almost surely as well.
\end{proof} 
We next see that the metric projection of an ordered simple function onto $G(\mathcal{R})$  remains ordered simple.
\begin{proposition}[Projection]\label{prop:proj}
Let $h=\sum_{i=1}^{n} \alpha_i\1_{A_i}\in L^2_\p (\nu)$ be an ordered simple function with $\{A_1,\dots,A_n\}$ an ordered partition of $\mathbb{R}$. Then $\proj_{ G(\mathcal{R})} h$ is also an ordered simple function on $\{A_1, \dots, A_n\}$.
\end{proposition}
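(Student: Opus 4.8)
The plan is to use the characterization from Proposition~\ref{prop:FR} that $G(\mathcal{R})$ consists precisely of the nonincreasing functions in $L^2(\nu)$, so that $g_* \coloneqq \proj_{G(\mathcal{R})} h$ is the unique nonincreasing function nearest to $h$. The strategy is then to replace $g_*$ by the ordered simple function obtained by averaging $g_*$ over each block $A_i$ of the partition, and to argue that this replacement both stays in $G(\mathcal{R})$ and does not increase the distance to $h$; uniqueness of the metric projection will then force the two to coincide.

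Concretely, for each $i$ with $\nu(A_i)>0$ I would set $\bar g_i \coloneqq \nu(A_i)^{-1}\int_{A_i} g_*\,\d\nu$ and define $\tilde g \coloneqq \sum_{i=1}^n \bar g_i\1_{A_i}$ (for blocks with $\nu(A_i)=0$ the value is immaterial to the $L^2$-norm and may be chosen so as to preserve monotonicity). First I would check that $\tilde g\in G(\mathcal{R})$. Because $\{A_1,\dots,A_n\}$ is an ordered partition, every $x\in A_i$ and $y\in A_j$ with $i<j$ satisfy $x\le y$; since $g_*$ is nonincreasing this gives $g_*(x)\ge g_*(y)$ pointwise, and averaging over the two blocks yields $\bar g_1\ge \bar g_2\ge\dots\ge \bar g_n$. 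An ordered simple function with nonincreasing values is itself nonincreasing, hence lies in $G(\mathcal{R})$ by Proposition~\ref{prop:FR}.

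The key inequality is $\lVert \tilde g - h\rVert\le \lVert g_*-h\rVert$, which I would obtain blockwise. On $A_i$ the function $h$ equals the constant $\alpha_i$, and $\bar g_i$ is the $\nu$-mean of $g_*$ over $A_i$, so the standard decomposition of squared deviation about the mean gives
\[
\int_{A_i}(g_*-\alpha_i)^2\,\d\nu = \int_{A_i}(g_*-\bar g_i)^2\,\d\nu + \nu(A_i)(\bar g_i-\alpha_i)^2 \ge \int_{A_i}(\tilde g-\alpha_i)^2\,\d\nu.
\]
Summing over $i$ yields $\lVert g_*-h\rVert^2\ge \lVert \tilde g-h\rVert^2$. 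Since $\tilde g\in G(\mathcal{R})$ while $g_*$ is the \emph{unique} minimizer of the distance to $h$ over $G(\mathcal{R})$, we must have $\tilde g = g_*$ almost everywhere, so $g_*$ is an ordered simple function on $\{A_1,\dots,A_n\}$, as claimed.

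I expect the only genuinely delicate point to be the verification that the averaged function remains in the cone: this is exactly where the ordered structure of the partition is used, and it is the reason the statement is specific to ordered partitions rather than arbitrary ones. The distance inequality is a routine consequence of the fact that the conditional mean minimizes squared deviation, and the degenerate blocks with $\nu(A_i)=0$ are a cosmetic technicality handled by the almost-everywhere convention.
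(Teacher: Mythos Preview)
Your proof is correct and takes a different route from the paper. The paper argues by contradiction one block at a time: if $\proj_{G(\mathcal{R})}h$ were nonconstant on some $A_j$, they replace its values on $A_j$ by the single value $c=\proj_{G(\mathcal{R})}h(\omega_*)$ at a point $\omega_*\in A_j$ where $\lvert h-\proj_{G(\mathcal{R})}h\rvert$ is minimized, observe that this replacement remains nonincreasing (since $c$ lies between the left and right values of the projection on the interval $A_j$), and conclude that the distance drops, contradicting minimality. Your averaging argument is the conditional-expectation version of the same idea: instead of picking a special value on one block, you average over \emph{every} block simultaneously and use the bias--variance decomposition to see that this can only decrease the $L^2$-distance to a function constant on each block. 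Your approach is slightly more robust in that it avoids any existence issue for the $\argmin$ in the paper's argument, and it is the standard device in isotonic regression; the paper's argument is more direct in that it uses only a pointwise comparison rather than an integral identity.
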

\begin{proof} 
Suppose that $\proj_{ G(\mathcal{R})} h$ is not constant on $A_j$ for some $j=1,\dots,n$. Define 
\[
\varphi = 
\begin{cases}
\proj_{ G(\mathcal{R})} h & \text{on } A_j^{\mathsf{c}},  \\
c & \text{on } A_j,	
\end{cases}
\]
where $c=\proj_{ G(\mathcal{R})} h (\omega_*)$ and $\omega_*=\argmin_{\omega\in A_j}\lvert h(\omega) -\proj_{ G(\mathcal{R})} h(\omega)\rvert$. Then $\varphi\in G(\mathcal{R})$ and
\[
\int  (h-\proj_{ G(\mathcal{R})} h)^2\,\d\nu \ge \int  (h-\varphi)^2\,\d\nu,
\]
a contradiction.
\end{proof}
A consequence of Proposition~\ref{prop:proj} is that metric projection onto $G(\mathcal{R})$ preserves ordering.
\begin{proposition}[Monotonicity]\label{prop:proj-monotone} 
Let  $g_1,g_2\in L^2_\p (\nu)$. If $g_1\le g_2$, then
\[
\proj_{ G(\mathcal{R})} g_1 \le \proj_{ G(\mathcal{R})} g_2.
\]
\end{proposition}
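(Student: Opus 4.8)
The plan is to exploit the fact that $G(\mathcal{R})$ is a \emph{lattice}: by Proposition~\ref{prop:FR} it consists of exactly the nonincreasing functions in $L^2(\nu)$, and the pointwise minimum and maximum of two nonincreasing square-integrable functions are again nonincreasing and square-integrable (since $\lvert p_1 \wedge p_2\rvert, \lvert p_1 \vee p_2\rvert \le \lvert p_1\rvert + \lvert p_2\rvert$). So, writing $p_i = \proj_{G(\mathcal{R})} g_i$, I would introduce the two competitors $u = p_1 \wedge p_2$ and $v = p_1 \vee p_2$, both of which therefore lie in $G(\mathcal{R})$. The goal is to show $u = p_1$, which is precisely the assertion $p_1 \le p_2$. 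The key observation is that $u$ and $v$ agree with $p_1$ and $p_2$ off the set $E = \{p_1 > p_2\}$, and swap roles on $E$, namely $u = p_2$ and $v = p_1$ on $E$.

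First I would compare the total squared error of the pair $(u,v)$ with that of $(p_1,p_2)$. Because everything cancels off $E$, only the integral over $E$ survives, where a direct expansion gives the pointwise identity
\[
(g_1 - p_2)^2 + (g_2 - p_1)^2 - (g_1 - p_1)^2 - (g_2 - p_2)^2 = 2(p_1 - p_2)(g_1 - g_2).
\]
Since $p_1 > p_2$ and $g_1 \le g_2$ on $E$, the right-hand side is nonpositive there, whence
\[
\lVert g_1 - u\rVert^2 + \lVert g_2 - v\rVert^2 \le \lVert g_1 - p_1\rVert^2 + \lVert g_2 - p_2\rVert^2.
\]

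Next I would invoke the minimizing property of the projections. As $u, v \in G(\mathcal{R})$ and $p_1, p_2$ are the nearest points to $g_1, g_2$, we have $\lVert g_1 - u\rVert \ge \lVert g_1 - p_1\rVert$ and $\lVert g_2 - v\rVert \ge \lVert g_2 - p_2\rVert$; summing these gives the reverse of the previous inequality. Hence equality must hold throughout, and in particular $\lVert g_1 - u\rVert = \lVert g_1 - p_1\rVert$, so $u$ also attains the minimal distance from $g_1$ over $G(\mathcal{R})$. By the uniqueness of the metric projection onto a closed convex set, $u = p_1$, i.e. $p_1 \wedge p_2 = p_1$, which is exactly $p_1 \le p_2$ almost everywhere.

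The substance of the argument is the choice of the lattice competitors $u,v$ together with the recognition that a single ``swap'' inequality, when combined with optimality of both projections, is rigid enough to force equality and hence (via uniqueness) $u = p_1$; the pointwise algebra and the membership $u,v \in G(\mathcal{R})$ are then routine. I expect the only point requiring care to be the verification that $G(\mathcal{R})$ is genuinely closed under $\wedge$ and $\vee$, which rests squarely on the characterization of $G(\mathcal{R})$ as the set of nonincreasing functions in Proposition~\ref{prop:FR}. (An alternative route would reduce to ordered simple functions via Corollary~\ref{cor:2.7} and Proposition~\ref{prop:proj} and pass to the limit using continuity of $\proj_{G(\mathcal{R})}$, but this ultimately requires proving the finite-dimensional isotonic case, so the direct lattice argument above is cleaner.)
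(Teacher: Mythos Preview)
Your argument is correct and takes a genuinely different route from the paper's. The paper proceeds in two stages: first it establishes the claim for ordered simple functions by assuming a single cell $E=A_i\cap B_j$ on which $\proj_{G(\mathcal{R})} h_1 > \proj_{G(\mathcal{R})} h_2$, constructing swapped competitors $\varphi,\psi$ on that cell, and then running through a six-case analysis on the relative ordering of $h_1,h_2,\proj h_1,\proj h_2$ to derive a contradiction; it then extends to general $g_1,g_2\in L^2_\p(\nu)$ by approximating via Corollary~\ref{cor:2.7} and invoking continuity of $\proj_{G(\mathcal{R})}$. Your lattice argument bypasses both the case analysis and the approximation step entirely: by taking $u=p_1\wedge p_2$ and $v=p_1\vee p_2$ globally (rather than swapping on a single cell), you swap simultaneously on all of $\{p_1>p_2\}$, and the single pointwise identity $2(p_1-p_2)(g_1-g_2)\le 0$ replaces the six cases. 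This is the standard order-preservation proof for isotonic regression and, as you note, it is cleaner; the only extra ingredient it requires beyond what the paper uses is the observation that $G(\mathcal{R})$ is a lattice, which is immediate from Proposition~\ref{prop:FR}. The paper's approach, by contrast, keeps everything phrased in terms of ordered simple functions and their projections (Proposition~\ref{prop:proj}), which fits the paper's broader program of reducing to that setting, but at the cost of a longer argument here.
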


\begin{proof} 
We first show that the statement holds for ordered simple functions. Let $h_1=\sum_{i=1}^{n} a_i\1_{A_i}$ and $h_2=\sum_{j=1}^{m} b_j\1_{B_j}$ where $(A_1, \dots, A_n)$ and $(B_1, \dots, B_m)$ are ordered partitions.  Suppose $h_1\le h_2$ but that $\proj_{ G(\mathcal{R})} h_1 > \proj_{ G(\mathcal{R})} h_2$ on $E=A_i\cap B_j$ for some $i$ and $j$. Define
\begin{align*}
\varphi &=(\proj_{ G(\mathcal{R})} h_1)\cdot\1_{E^{\mathsf{c}} }+(\proj_{ G(\mathcal{R})} h_2)\cdot\1_E ,\\
\psi&=(\proj_{ G(\mathcal{R})} h_2)\cdot\1_{E^{\mathsf{c}} }+(\proj_{ G(\mathcal{R})} h_1)\cdot\1_E .
\end{align*}
Both $\varphi$ and $\psi$ are clearly nonincreasing and thus $\mathcal{R}$-measurable. There are six possibilities for the values of these functions on $E$: 
\begin{multicols}{2}
\begin{enumerate}[\upshape (a)]
    \item \label{case:1} $ h_2\ge h_1\ge \proj_{ G(\mathcal{R})} h_1\ge \proj_{ G(\mathcal{R})} h_2 $,
    \item \label{case:2} $ h_2\ge \proj_{ G(\mathcal{R})} h_1\ge h_1\ge \proj_{ G(\mathcal{R})} h_2 $,
    \item \label{case:3} $ h_2\ge \proj_{ G(\mathcal{R})} h_1\ge \proj_{ G(\mathcal{R})} h_2\ge h_1 $,
    \item \label{case:4} $ \proj_{ G(\mathcal{R})} h_1\ge h_2\ge h_1\ge \proj_{ G(\mathcal{R})} h_2 $,
    \item \label{case:5} $ \proj_{ G(\mathcal{R})} h_1\ge h_2\ge \proj_{ G(\mathcal{R})} h_2\ge h_1 $,
    \item \label{case:6} $ \proj_{ G(\mathcal{R})} h_1\ge \proj_{ G(\mathcal{R})} h_2\ge h_2\ge h_1 $.
\end{enumerate}
\end{multicols}
\noindent For Cases~\ref{case:1}, \ref{case:2}, and \ref{case:3},
\[
\lVert \psi-h_2 \rVert \le \lVert \proj_{ G(\mathcal{R})} h_2-h_2  \rVert;
\]
for Cases~\ref{case:5} and \ref{case:6},
\[
\lVert \varphi-h_1 \rVert \le \lVert \proj_{ G(\mathcal{R})} h_1-h_1 \rVert;
\] 
and for Case~\ref{case:4}, either of the following
\[
\lVert \psi-h_2 \rVert \le \lVert \proj_{ G(\mathcal{R})} h_2-h_2 \rVert, \quad
\lVert \varphi-h_1 \rVert \le \lVert \proj_{ G(\mathcal{R})} h_1-h_1 \rVert
\]
must hold; all leading to contradictions.  

For the general case with $g_1,g_2\in L^2_\p (\nu)$, we set $g_0\coloneqq g_2-g_1\in L^2_\p (\nu)$. By Corollary~\ref{cor:2.7}, we have two sequences of nonnegative ordered simple functions $(h_{0,n})$ and $(h_{1,n})$ such that $h_{0,n}\to g_0$ and $h_{1,n}\to g_1$ in $L^2$-norm. Set $h_{2,n}\coloneqq h_{0,n}+h_{1,n}$, which is nonnegative, ordered simple, and satisfies $h_{2,n}\to g_2$ in $L^2$-norm and $h_{2,n}\ge h_{1,n}$ for each $n$.  By the continuity of the metric projection operator $\proj_{ G(\mathcal{R})}$,
\[
\proj_{ G(\mathcal{R})} g_1 = \lim_{n\to\infty} \proj_{ G(\mathcal{R})} h_{1,n} \le \lim_{n\to\infty} \proj_{ G(\mathcal{R})} h_{2,n} = \proj_{ G(\mathcal{R})} g_2,
\]
as required.
\end{proof}
A slight refinement of the last step of the proof above yields the following.
\begin{corollary}[Sequential monotonicity]
Let $g\in L^2_\p (\nu)$. If $(h_n)$ is a monotone increasing sequence in $L^2_\p (\nu)$ converging to $g$, then $(\proj_{ G(\mathcal{R})} h_n)$  is a monotone increasing sequence in $L^2_\p (\nu)$ converging to $\proj_{ G(\mathcal{R})} g$. Convergence is both in the sense of $L^2$-norm and almost surely.
\end{corollary}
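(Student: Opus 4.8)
The plan is to read off all three assertions by assembling Proposition~\ref{prop:proj-monotone}, the continuity of $\proj_{G(\mathcal{R})}$, and Lemma~\ref{lem:modes-of-cvg}; the remark's phrase ``slight refinement of the last step'' signals that the $L^2$-convergence is immediate from the previous proof and that only the almost sure convergence is genuinely new.

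First I would dispose of monotonicity and membership. Since $(h_n)$ increases, $h_n \le h_{n+1}$ for every $n$, so Proposition~\ref{prop:proj-monotone} gives $\proj_{G(\mathcal{R})} h_n \le \proj_{G(\mathcal{R})} h_{n+1}$; hence $(\proj_{G(\mathcal{R})} h_n)$ is monotone increasing. Applying the same proposition to the pair $0 \le h_n$, together with $\proj_{G(\mathcal{R})} 0 = 0$ (valid because the zero function is nonincreasing and so lies in $G(\mathcal{R})$), yields $\proj_{G(\mathcal{R})} h_n \ge 0$; thus each projected iterate lies in $L^2_\p(\nu)$.

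Next I would obtain the $L^2$-convergence exactly as in the preceding proof. Because $0 \le h_n \uparrow g$ with $g \in L^2(\nu)$, each $h_n$ is dominated by $g$, so Lemma~\ref{lem:modes-of-cvg}(c) promotes the pointwise monotone convergence to $h_n \to g$ in $L^2$-norm. Continuity of $\proj_{G(\mathcal{R})}$ then delivers $\proj_{G(\mathcal{R})} h_n \to \proj_{G(\mathcal{R})} g$ in $L^2$-norm.

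The genuinely new part, and the step I expect to require the most care, is the almost sure convergence. As a monotone increasing sequence, $(\proj_{G(\mathcal{R})} h_n)$ converges pointwise almost everywhere to some limit $\widetilde g$. Independently, $L^2$-convergence implies convergence in $\nu$-measure by Lemma~\ref{lem:modes-of-cvg}(a), so Lemma~\ref{lem:modes-of-cvg}(b) furnishes a subsequence converging almost surely to $\proj_{G(\mathcal{R})} g$. Since the full sequence already converges almost everywhere to $\widetilde g$, the two limits must agree, forcing $\widetilde g = \proj_{G(\mathcal{R})} g$ almost surely and hence almost sure convergence of the entire sequence. The crux is precisely this identification: it is the monotonicity established in the first step that upgrades the merely subsequential almost sure convergence supplied by Lemma~\ref{lem:modes-of-cvg} into convergence of the whole sequence, so the order of the argument---monotonicity before the limit identification---is essential.
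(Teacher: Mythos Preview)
Your proof is correct and follows the same strategy as the paper: continuity of $\proj_{G(\mathcal{R})}$ for the $L^2$ limit, then Lemma~\ref{lem:modes-of-cvg} to extract an almost surely convergent subsequence. In fact your write-up is more complete than the paper's own---the paper simply says ``by passing to a subsequence if necessary'' without spelling out, as you do, that monotonicity of $(\proj_{G(\mathcal{R})} h_n)$ is what upgrades subsequential almost sure convergence to convergence of the full sequence, and it also omits the explicit verification of monotonicity and nonnegativity that you supply via Proposition~\ref{prop:proj-monotone}.
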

\begin{proof}
Let $h_n$ be a sequence of ordered simple function such that $h_n\to g$ almost surely and in $L^2$-norm. Continuity of $\proj_{ G(\mathcal{R})}$ yields
\[
\proj_{ G(\mathcal{R})} h_n\to \proj_{ G(\mathcal{R})} g
\]
in $L^2$-norm. By passing to a subsequence if necessary, we may assume that this convergence also holds in the almost sure sense.
\end{proof}

\begin{proposition}\label{prop:proj-indicator-ineq} 
Let $A\in \mathcal{R}$. Then
\[
\proj_{ G(\mathcal{R})} (g\1_A) = \proj_{ G(\mathcal{R})} (g\1_A) \cdot \1_A
\le (\proj_{ G(\mathcal{R})} g)\cdot \1_A.
\]
\end{proposition}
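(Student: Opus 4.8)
The plan is to prove the two relations separately: first the support identity $\proj_{G(\mathcal{R})}(g\1_A) = \proj_{G(\mathcal{R})}(g\1_A)\cdot\1_A$, which asserts that the projection of $g\1_A$ is nonnegative and vanishes off $A$; and then the inequality, which will fall out of the monotonicity already established in Proposition~\ref{prop:proj-monotone}. Throughout, write $p = \proj_{G(\mathcal{R})}(g\1_A)$ and $q = \proj_{G(\mathcal{R})} g$.

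For the support identity I would exhibit a single competitor and invoke uniqueness of the metric projection. Since $A$ is a left ray, its indicator $\1_A$ is nonincreasing, and I claim that $\tilde p \coloneqq \max(p,0)\cdot\1_A$ lies in $G(\mathcal{R})$: on $A$ it equals $\max(p,0)$, which is nonincreasing because $t \mapsto \max(t,0)$ is nondecreasing and $p$ is nonincreasing by Proposition~\ref{prop:FR}; on $A^{\mathsf{c}}$ it is $0$; and at the transition from $A$ to $A^{\mathsf{c}}$ the value drops from something nonnegative to $0$, so monotonicity is preserved across the whole line. To compare distances I would use $g\1_A \ge 0$, so that on $A$ the truncation $\max(p,0)$ is no farther from $g$ than $p$ is (it is the pointwise projection of $p$ onto $[0,\infty)\ni g$), while on $A^{\mathsf{c}}$ the target $g\1_A$ is $0$ and $\tilde p = 0$ contributes nothing. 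This yields $\lVert g\1_A - \tilde p \rVert \le \lVert g\1_A - p \rVert$, and uniqueness of the projection forces $p = \tilde p$. In particular $p \ge 0$ and $p = p\1_A$, which is exactly the first equality.

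For the inequality I would apply Proposition~\ref{prop:proj-monotone} to the pair $g\1_A \le g$, both of which lie in $L^2_\p(\nu)$ since $g \ge 0$; this gives $p \le q$. Multiplying this pointwise inequality by $\1_A \ge 0$ gives $p\1_A \le q\1_A$, and combining with the support identity $p = p\1_A$ yields $p \le q\1_A$, as claimed.

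I expect the only delicate point to be the support identity, specifically the verification that $\tilde p$ remains nonincreasing across the boundary of $A$; this is precisely where nonnegativity of the truncated values and the left-handedness of the ray $A$ are both used. Once the correct competitor $\tilde p = \max(p,0)\1_A$ is in hand, both the distance comparison and the final inequality are routine.
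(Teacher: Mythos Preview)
Your proof is correct and follows essentially the same route as the paper: exhibit a competitor in $G(\mathcal{R})$ that is at least as close to $g\1_A$ as $p$ is, invoke uniqueness of the metric projection for the support identity, and then apply Proposition~\ref{prop:proj-monotone} to $g\1_A \le g$ for the inequality. The one difference is that the paper uses $p\1_A$ as the competitor while you use $\max(p,0)\1_A$; your choice is in fact more careful, since it simultaneously proves $p \ge 0$ and thereby makes the verification that the competitor is nonincreasing across the boundary of $A$ explicit, a point the paper leaves implicit.
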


\begin{proof} 
We have
\begin{align*}
\int (g\1_A-&\proj_{ G(\mathcal{R})} (g\1_A))^2\,\d\nu\\
&= \int_A (g\1_A-\proj_{ G(\mathcal{R})} (g\1_A))^2\,\d\nu + \int_{A^{\mathsf{c}} } (g\1_A-\proj_{ G(\mathcal{R})} (g\1_A))^2\,\d\nu\\
&= \int_A (g\1_A-\proj_{ G(\mathcal{R})} (g\1_A))^2\,\d\nu  + \int_{A^{\mathsf{c}} } (\proj_{ G(\mathcal{R})} (g\1_A))^2\,\d\nu\\
&\ge  \int_A (g\1_A-\proj_{ G(\mathcal{R})} (g\1_A))^2\,\d\nu\\
&=\int  (g\1_A-\proj_{ G(\mathcal{R})} (g\1_A)\cdot\1_A)^2\,\d\nu.
\end{align*}
Since the projection is unique, we have $\proj_{ G(\mathcal{R})} (g\1_A) = \proj_{ G(\mathcal{R})} (g\1_A) \cdot \1_A$. 
As $g\1_A\le g$, the monotonicity in Proposition~\ref{prop:proj-monotone}  yields
\[
\proj_{ G(\mathcal{R})} (g\1_A) = \proj_{ G(\mathcal{R})} (g\1_A) \cdot\1_A \le (\proj_{ G(\mathcal{R})} g)\cdot\1_A. \qedhere
\]
\end{proof}

Let  $\mathcal{C} \subseteq \mathcal{B}$ be a $\sigma$-subalgebra. The Radon--Nikodym theorem states that for $\sigma$-finite measures $\mu$ and $\nu$ on the measurable space $(\mathbb{R},\mathcal{B})$ with $\mu\ll\nu$, there exists a $\mathcal{C}$-measurable function $\rho_\mathcal{C}$ such that $\mu(A)=\int_A \rho_\mathcal{C} \,\d\nu$ for any $A\in\mathcal{C}$.

What happens if $\mathcal{C}$ is not a $\sigma$-subalgebra? For us, the most important case to consider is when $\mathcal{C}=\mathcal{R}$, which is not a $\sigma$-subalgebra. We will show in Corollary~\ref{cor:RNP} that $\mathcal{R}$ satisfies a kind of Radon--Nikodym property: There is a $\mathcal{R}$-measurable function $\rho_\mathcal{R}$ and a subcollection $\mathcal{E}\subseteq \mathcal{R}$  such that  $\mu(A)=\int_A \rho_\mathcal{R}\,\d\nu$ for any $A\in \mathcal{E}$. The bulk of the work is in establishing the following technical result.

\begin{theorem}\label{thm:key-lemma} 
Let $g\in L^2_\p (\nu)$. Define the $\pi$-system $\mathcal{E}(g) \subseteq \mathcal{R}$ by
\begin{equation}\label{eq:E(g)}
\mathcal{E}(g) \coloneqq \bigl\{ \{\proj_{ G(\mathcal{R})} g>r \} : r\ge 0 \bigr\}\cup \{\mathbb{R} \}.
\end{equation}
Then  for any $E\in \mathcal{E}(g)$,
\begin{equation}\label{eq:3}
\int_E  \proj_{ G(\mathcal{R})} g\,\d\nu = \int_E  g\,\d\nu.
\end{equation}
\end{theorem}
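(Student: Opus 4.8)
The plan is to exploit the variational (obtuse-angle) characterization of the metric projection onto the closed convex cone $G(\mathcal{R})$ and then reduce the claimed equality on every superlevel set to a single orthogonality identity via a layer-cake decomposition. Write $g_* \coloneqq \proj_{G(\mathcal{R})} g$. Since $g \ge 0$ and $\proj_{G(\mathcal{R})} 0 = 0$, the monotonicity in Proposition~\ref{prop:proj-monotone} gives $g_* \ge 0$, and by Proposition~\ref{prop:FR} the function $g_*$ is nonincreasing; hence each $\{g_* > r\}$ is a ray and $\1_{\{g_* > r\}} \in G(\mathcal{R})$.

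First I would record the two standard consequences of $g_*$ being the projection onto a closed convex cone. The obtuse-angle criterion $\int (g - g_*)(h - g_*)\,\d\nu \le 0$ for all $h \in G(\mathcal{R})$, applied to $h = 0$ and to $h = 2 g_*$ (both in the cone), forces $\int (g - g_*) g_*\,\d\nu = 0$; feeding this back then yields $\int (g - g_*) h \,\d\nu \le 0$ for every nonincreasing $h \in L^2(\nu)$.

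The core of the argument is the auxiliary function $\phi(r) \coloneqq \int_{\{g_* > r\}} (g - g_*)\,\d\nu$ for $r \ge 0$. Two facts about $\phi$ do the work. Because $\1_{\{g_* > r\}} \in G(\mathcal{R})$, the inequality above gives $\phi(r) \le 0$ for all $r \ge 0$. Because $g_* \ge 0$, the layer-cake identity $g_* = \int_0^\infty \1_{\{g_* > r\}}\,\d r$ together with Tonelli--Fubini (justified since $\int_0^\infty \int |g - g_*|\1_{\{g_* > r\}}\,\d\nu\,\d r = \int |g - g_*|\, g_*\,\d\nu \le \lVert g - g_* \rVert\, \lVert g_* \rVert < \infty$) gives $\int_0^\infty \phi(r)\,\d r = \int (g - g_*) g_*\,\d\nu = 0$. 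A nonpositive function with vanishing integral vanishes almost everywhere, so $\phi(r) = 0$ for Lebesgue-almost every $r \ge 0$.

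The remaining obstacle—and the step I expect to require the most care—is upgrading this almost-everywhere statement to every $r \ge 0$, since $\mathcal{E}(g)$ singles out the specific sets $\{g_* > r\}$ rather than a full-measure collection of thresholds. For this I would show $\phi$ is right-continuous: as $r \downarrow r_0$ the superlevel sets increase to $\{g_* > r_0\}$, so $(g - g_*)\1_{\{g_* > r\}} \to (g - g_*)\1_{\{g_* > r_0\}}$ pointwise under the $L^1(\nu)$-dominating function $|g - g_*|$, and dominated convergence gives $\phi(r) \to \phi(r_0)$. Choosing a sequence $r_n \downarrow r_0$ from the full-measure set on which $\phi$ vanishes then forces $\phi(r_0) = 0$, which establishes \eqref{eq:3} for every ray in $\mathcal{E}(g)$. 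Finally, the case $E = \mathbb{R}$ is handled separately: both the constant $1$ and $-1$ lie in $G(\mathcal{R})$, so $\int (g - g_*)\cdot 1\,\d\nu \le 0$ and $\int (g - g_*)\cdot(-1)\,\d\nu \le 0$ give $\int (g - g_*)\,\d\nu = 0$ directly.
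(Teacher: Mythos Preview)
Your argument is correct and substantially different from the paper's. The paper proceeds in four steps: it first establishes the identity for ordered simple functions by rewriting the projection as a finite quadratic program with monotonicity constraints and reading off the equalities from the KKT complementary-slackness conditions; it then passes to general $g$ by $L^2$-approximation with ordered simple functions, first under the auxiliary assumption $\nu(\{g_*=r\})=0$ and then removing that assumption by a delicate squeeze on thresholds; the case $E=\mathbb{R}$ is handled by a separate KKT argument. Your route bypasses all of this: the cone projection identities $\langle g-g_*,g_*\rangle=0$ and $\langle g-g_*,h\rangle\le 0$ for $h\in G(\mathcal{R})$ give $\phi\le 0$ directly, the layer-cake/Tonelli step collapses the problem to $\int_0^\infty\phi=0$, and right-continuity of $r\mapsto\{g_*>r\}$ upgrades the a.e.\ vanishing of $\phi$ to pointwise. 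What the paper's approach buys is a concrete, computable description of the projection on simple functions (useful later for Proposition~\ref{prop:proj} and Lemma~\ref{lem:integral-proj-eq}, which are also proved via simple-function reductions); what your approach buys is brevity and a proof that transparently uses only the abstract Hilbert-space structure of the cone projection together with the single combinatorial fact that indicators of superlevel sets of $g_*$ lie in $G(\mathcal{R})$, so it would port to other cones with that closure property.
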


\begin{proof} 
We establish the result in four steps, showing that
\begin{enumerate}[\upshape (i)]
\item\label{it:sim} it holds when $g$ is an ordered simple function $h$;
\item\label{it:ass} it holds for $E=\{\proj_{ G(\mathcal{R})} g>r\}$ under the assumption that $\nu(\{\proj_{ G(\mathcal{R})} g= r\}) = 0$;
\item\label{it:ext} it extends to all $E=\{\proj_{ G(\mathcal{R})} g>r\}$;
\item\label{it:R} it holds for $E=\mathbb{R}$.
\end{enumerate}

\textsc{Step~\ref{it:sim}:} First we assume that we have an ordered simple function
\[
h=\sum_{i=1}^{n} \alpha_i\1_{A_i},\quad \proj_{ G(\mathcal{R})} h=\sum_{i=1}^{n} \beta_i\1_{A_i}.
\]
So $E=\{\proj_{ G(\mathcal{R})} h>r\}$ can be written as $E=A_1 \cup \dots \cup A_k \in\mathcal{E}(h)$ where $\beta_k>r$ and $\beta_{k+1}\le r$. Write $\alpha=(\alpha_1, \dots, \alpha_n)$, $\beta =(\beta_1, \dots, \beta_n)$, $\omega=(\omega_1, \dots, \omega_n)$ where $\omega_i=\nu(A_i)$, and let $W=\diag(\omega)$. Then $\beta$ is the solution to the following quadratic programming problem:
\begin{align*}
	\operatorname{minimize} &\quad (\alpha -\beta)^\tp W (\alpha-\beta) \\
	\operatorname{subject~to} &\quad \beta_1\ge \dots\ge \beta_n\ge 0.
\end{align*}
Observe that for
\[
\Pi = 
\begin{bmatrix}
  1    &     -1    &        &        \\ 
      &   1      &   -1    &       \\ 
      &         &  \ddots     & -1      \\
      &     &  &      1           
\end{bmatrix},\qquad
\Pi^{-1} = 
\begin{bmatrix}
  1    &     1    &  \dots      &   1     \\ 
      &   1      &   \dots    &   1    \\ 
      &         &  \ddots     & \vdots      \\
      &     &     &      1           
\end{bmatrix}.
\]
Let $\gamma=\Pi\beta$. Then the quadratic programming problem above is equivalent to
\begin{align*}
	\operatorname{minimize} &\quad \gamma^\tp \Pi^{-\tp}W\Pi^{-1} \gamma - 2 \alpha^\tp \Pi^{-1} W \gamma \\
	\operatorname{subject~to} &\quad -\gamma\preceq  0.
\end{align*}
Standard KKT conditions yield
 \[
 \lambda = 2 \Pi^{-\tp}W(\Pi^{-1}\gamma -\alpha)=2 \Pi^{-\tp}W(\beta-\alpha)
 \] 
or
\begin{equation}\label{eq:4}
\begin{bmatrix} 
     \lambda_1      \\ 
      \lambda_2     \\
      \vdots \\
      \lambda_n
\end{bmatrix}
=2 
\begin{bmatrix} 
  1    &         &        &        \\ 
  1    &   1      &       &       \\ 
  \vdots    &  \vdots       &  \ddots     &       \\
   1   &  1   & \dots &      1           
\end{bmatrix}
\begin{bmatrix} 
  \omega_1    &         &        &        \\ 
      &   \omega_2      &       &       \\ 
      &         &  \ddots     &       \\
      &     &  &      \omega_n           
\end{bmatrix}
\begin{bmatrix} 
     \beta_1-\alpha_1      \\ 
     \beta_2-\alpha_2    \\
      \vdots \\
      \beta_n-\alpha_n
\end{bmatrix}.	
\end{equation}
Since $\beta_k = (\Pi^{-1} \gamma)_k = \gamma_k+\gamma_{k+1}+\dots+\gamma_n>r$ and $\beta_{k+1}=\gamma_{k+1}+\gamma_{k+2}+\dots+\gamma_n\le r$, we must have $\gamma_k>0$. By the KKT complementary slackness condition,
\begin{equation}\label{eq:lk}
\lambda_k=2( \Pi^{-\tp}W(\beta-\alpha))_k=2\sum_{i=1}^k \omega_i(\beta_i-\alpha_i) =0.
\end{equation}
Hence we have
\[
\int  (\proj_{ G(\mathcal{R})} h)\cdot\1_E \,\d\nu = \sum_{i=1}^k\beta_i \omega_i = \sum_{i=1}^k\alpha_i \omega_i = \int  h \1_E \,\d\nu.
\]

\textsc{Step~\ref{it:ass}:} Next we drop the ordered simple assumption and just require that  $g\in L^2_\p (\nu)$ be a general function with
\begin{equation}\label{eq:FR0}
\nu(\{\proj_{ G(\mathcal{R})} g = r\}) = 0.
\end{equation}
Let $(h_n)$ be a sequence of nonnegative ordered simple functions with $h_n\to g$ almost surely and in $L^2$-norm. So $\proj_{ G(\mathcal{R})} h_n\to \proj_{ G(\mathcal{R})} g$ in $L^2$-norm and thus in measure. By passing through a subsequence if necessary, we may assume that $\proj_{ G(\mathcal{R})} h_n\to \proj_{ G(\mathcal{R})} g$ almost surely and in $L^2$-norm. Set $E=\{\proj_{ G(\mathcal{R})} g>r\}$ and $E_n=\{\proj_{ G(\mathcal{R})} h_n>r\}$, bearing in mind that we assume \eqref{eq:FR0}. Then
\begin{align*}
	\lVert \1_E -\1_{E_n} \rVert ^2 &= \int (\1_E -\1_{E_n})^2\,\d\nu \\
	&=\nu(E)+\nu(E_n)-2\nu(E\cap E_n) = \nu(E\triangle E_n) = 
	\nu(E\cap E_n^{\mathsf{c}} ) + \nu(E_n\cap E^{\mathsf{c}} ).
\end{align*}
Consider the null set $N=\{\proj_{ G(\mathcal{R})} h_n\not\to \proj_{ G(\mathcal{R})} g\}$. If
\[
x\in E\cap E_n^{\mathsf{c}} =\{\proj_{ G(\mathcal{R})} g>r\}\cap\{\proj_{ G(\mathcal{R})} h_n\le r\}
\]
infinitely often, then 
\[
\liminf_{n\to\infty} \proj_{ G(\mathcal{R})} h_n(x)\le r < \proj_{ G(\mathcal{R})} g(x),
\]
and so $x\in N$. Consequently, $\limsup_{n\to\infty} E\cap E_n^{\mathsf{c}}  \subseteq N$.  On the other hand, we may decompose
\begin{align*}
E_n\cap E^{\mathsf{c}} &=\bigl[ {\{\proj_{ G(\mathcal{R})} h_n>r\}\cap \{\proj_{ G(\mathcal{R})} g<r\}}\bigr] \cup \bigl[\{\proj_{ G(\mathcal{R})} h_n>r\}\cap \{\proj_{ G(\mathcal{R})} g = r\}\bigr] \\
&\eqqcolon F_n \cup G_n.
\end{align*}
Since $G_n$ is a null set for each $n$, if $x\in F_n$ infinitely often, then 
\[
\limsup_{n\to\infty} \proj_{ G(\mathcal{R})} h_n(x)\ge r > \proj_{ G(\mathcal{R})} g(x),
\]
and so $x\in N$ as well. Consequently, $\limsup_{n\to\infty} F_n \subseteq N$. We obtain
\begin{align*}
	\limsup_{n\to\infty} \lVert \1_E -\1_{E_n} \rVert ^2 & \le \limsup_{n\to\infty} \bigl[ \nu(E\cap E_n^{\mathsf{c}} ) + \nu(F_n) + \nu(G_n)\bigr] \\
	&\le  \limsup_{n\to\infty} \nu(E\cap E_n^{\mathsf{c}} ) + \limsup_{n\to\infty} \nu(F_n)\\
	&\le \nu \Bigl(\limsup_{n\to\infty} E\cap E_n^{\mathsf{c}} \Bigr) + \nu\Bigl(\limsup_{n\to\infty} F_n\Bigr) = 0,
\end{align*}
and by H\"{o}lder,
\begin{align*}
\lVert h\1_E-h_n\1_{E_n}\rVert_1 &= \lVert h\1_E - h\1_{E_n} + h\1_{E_n}-h_n\1_{E_n}\rVert_1 \\
&\le \lVert h\cdot(\1_E - \1_{E_n})\rVert_1  + \lVert (h-h_n)\cdot\1_{E_n}\rVert_1  \\
&\le \lVert h \rVert \lVert \1_E - \1_{E_n}\rVert + \lVert h-h_n\rVert \lVert \1_{E_n}\rVert \to 0.
\end{align*}
The same argument also yields
\[
(\proj_{ G(\mathcal{R})} h_n)\cdot\1_{E_n}\to (\proj_{ G(\mathcal{R})} g)\cdot \1_E
\]
in $L^1$-norm. By Lemma~\ref{lem:lp-imply-exch}, we get
\[
	\int  g \1_E \,\d\nu = \lim_{n\to\infty} \int  h_n \1_{E_n} \,\d\nu
	=\lim_{n\to\infty} \int  (\proj_{ G(\mathcal{R})} h_n)\cdot \1_{E_n} \,\d\nu
	=\int  (\proj_{ G(\mathcal{R})} g)\cdot\1_E\,\d\nu
\]
as required.

\textsc{Step~\ref{it:ext}:} We next omit the condition \eqref{eq:FR0}. Let $x=\inf \{\proj_{ G(\mathcal{R})} g = r\}$ and assume $x>-\infty$ for nontriviality. For any $y<x$, we have $\proj_{ G(\mathcal{R})} g(y)\ge r$ and so
\begin{equation}\label{eq:gyr}
\inf_{y\in (-\infty,x)}\proj_{ G(\mathcal{R})} g(y)\ge r.
\end{equation}
If we have strict inequality in \eqref{eq:gyr}, then there exists $\varepsilon>0$ such that $\inf_{y\in (-\infty,x)}\proj_{ G(\mathcal{R})} g(y) > r+\varepsilon$. Thus 
\[
E_r\coloneqq \{\proj_{ G(\mathcal{R})} g>r\}=\{\proj_{ G(\mathcal{R})} g>r+\varepsilon\}\eqqcolon E_{r+\varepsilon}.
\]
Since $\proj_{ G(\mathcal{R})} g\ne r+\varepsilon$ almost surely, $\nu(\{\proj_{ G(\mathcal{R} )} g=r+\varepsilon\})=0$, and
\[
\int_{E_r} \proj_{ G(\mathcal{R})} g\,\d\nu = \int_{E_{r+\varepsilon}} \proj_{ G(\mathcal{R})} g\,\d\nu 
=\int_{E_{r+\varepsilon}} g\,\d\nu = \int_{E_r}  g \,\d\nu.
\]
If we have equality in \eqref{eq:gyr}, then for any $k\in\mathbb{N}$, we may pick $x_k\in (-\infty,x)$ such that 
\[
r<\proj_{ G(\mathcal{R})} g(x_k)<r+\frac{1}{k}
\]
and $r_k\in (r,\proj_{ G(\mathcal{R})} g(x_k)]$ such that
\[
r<r_k<r+\frac1k \quad\text{and}\quad \nu(\proj_{ G(\mathcal{R})} g=r_k)=0.
\]
Hence $E_k \coloneqq \{\proj_{ G(\mathcal{R})} g>r_k\}$, $k \in \mathbb{N}$, is an increasing sequence of nested sets with $E_k\subseteq E_r$ and so $\bigcup_{k=1}^\infty E_k\subseteq E_r$.  Conversely, if $y\in E_r$, i.e., $\proj_{ G(\mathcal{R})} g(y)>r$, then there exists $r_k$ such that $r_k<\proj_{ G(\mathcal{R})} g(y)$; so $y\in E_k$; and so $E_k\uparrow E_r$.  Define the finite measures
\[
\zeta(A)\coloneqq \int_A \proj_{ G(\mathcal{R})} g\,\d\nu ,\qquad \xi(A)\coloneqq \int_A g\,\d\nu
\] 
for any $A\in\mathcal{B}$. By the continuity of measure from below,
\[
\zeta(E_r) = \lim_{k\to\infty} \zeta(E_k) = \lim_{k\to\infty} \xi(E_k) = \xi(E_r)
\]
as required.

\textsc{Step~\ref{it:R}:} It remains to treat the case $E = \mathbb{R}$.
We will show that $\int \proj_{ G(\mathcal{R})} g\,\d\nu = \int  g\,\d\nu$ by first establishing it for an ordered simple function  $h=\sum_{i=1}^{n}\alpha_i\1_{A_i}$. We may assume that $\beta_k>0$ and $\beta_{k+1}=\beta_{k+2}=\dots=\beta_n=0$ without loss of generality. By \eqref{eq:4} and Step~\ref{it:sim},
\begin{align*}
\lambda_n &= 2 \sum_{i=1}^{n} \omega_i(\beta_i-\alpha_i) = 2 \sum_{i=1}^{k} \omega_i(\beta_i-\alpha_i) + 2\sum_{j=k+1}^n \omega_i (\beta_i-\alpha_i) \\
&=	-\lambda_k -2 (\omega_{k+1}\alpha_{k+1}+\omega_{k+2}\alpha_{k+2}+\dots+\omega_n\alpha_n).
\end{align*}
By \eqref{eq:lk}, $\lambda_k = 0$. By KKT dual feasibility, $\lambda_n\ge 0$. Since we also have $\omega_i,\alpha_i\ge 0$, $i=k+1, k+2,\dots,n$,
\[
\lambda_n=-2 (\omega_{k+1}\alpha_{k+1}+\omega_{k+2}\alpha_{k+2}+\dots+\omega_n\alpha_n)=0.
\] 
It follows from \eqref{eq:4} that $0=\lambda_n=2\sum_{i=1}^{n} \omega_i(\beta_i-\alpha_i)$ and so
\[
\int  \proj_{ G(\mathcal{R})} h\,\d\nu = \sum_{i=1}^{n} \beta_iw_i=\sum_{i=1}^{n} \alpha_iw_i = \int  h\,\d\nu.
\]
For a general function $g\in L^2_\p (\nu)$, pick a sequence of nonnegative ordered simple functions $h_n\to g$ in $L^2$-norm and apply Lemma~\ref{lem:modes-of-cvg} to get
\[
	\int  g \,\d\nu = \lim_{n\to\infty} \int  h_n \,\d\nu  
	=\lim_{n\to\infty} \int  \proj_{ G(\mathcal{R})} h_n \,\d\nu=\int  \proj_{ G(\mathcal{R})} g \,\d\nu. \qedhere
\]
\end{proof}
We state an immediate corollary of Theorem~\ref{thm:key-lemma}.
\begin{corollary}\label{cor:generate}
Let  $g\in L^2_\p (\nu)$. If $\proj_{ G(\mathcal{R})} g (E) \in \mathcal{B}$, then
\[
\int_E  \proj_{ G(\mathcal{R})} g\,\d\nu = \int_E  g \,\d\nu.	
\]	
\end{corollary}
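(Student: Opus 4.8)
The plan is to promote the equality of Theorem~\ref{thm:key-lemma}, which is asserted only on the $\pi$-system $\mathcal{E}(g)$, to the whole $\sigma$-algebra $\sigma(\mathcal{E}(g))$ that it generates, via the uniqueness-of-measures principle in Lemma~\ref{lem:pi-sys}. Throughout write $\rho \coloneqq \proj_{G(\mathcal{R})} g$, and recall from Proposition~\ref{prop:FR} that $\rho$ is nonincreasing; moreover $\rho \ge 0$, since truncating any candidate below at $0$ can only decrease the $L^2$-distance to the nonnegative $g$, so by uniqueness of the projection $\rho = \max(\rho,0)$. Introduce the two finite Borel measures
\[
\zeta(A) \coloneqq \int_A \rho\,\d\nu, \qquad \xi(A) \coloneqq \int_A g\,\d\nu,
\]
already used in the proof of Theorem~\ref{thm:key-lemma}; they are finite because $\nu$ is a probability measure and $\rho, g \in L^2(\nu) \subseteq L^1(\nu)$.

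First I would note that $\mathcal{E}(g)$, defined in \eqref{eq:E(g)} as the nested chain $\{\{\rho>r\} : r\ge 0\}$ together with $\mathbb{R}$, is a $\pi$-system: a family totally ordered by inclusion is automatically closed under intersection. Theorem~\ref{thm:key-lemma} states exactly that $\zeta$ and $\xi$ coincide on every member of this $\pi$-system. The case $E=\mathbb{R}$ in particular gives $\zeta(\mathbb{R}) = \int \rho\,\d\nu = \int g\,\d\nu = \xi(\mathbb{R})$, so $\zeta$ and $\xi$ are finite measures of equal total mass, which is the remaining hypothesis needed to invoke Lemma~\ref{lem:pi-sys}.

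Next I would identify the generated $\sigma$-algebra. Since $\rho \ge 0$, its level sets are the preimages $\{\rho>r\} = \rho^{-1}\bigl((r,\infty)\bigr)$ with $r\ge 0$, and $\{(r,\infty) : r\ge 0\}$ generates, after pulling back by the nonnegative $\rho$, all Borel preimages under $\rho$. Concretely, Lemma~\ref{lem:sigma-preimage} gives
\[
\sigma(\mathcal{E}(g)) = \rho^{-1}\bigl(\sigma(\{(r,\infty) : r\ge 0\})\bigr) = \sigma(\rho),
\]
the $\sigma$-algebra generated by $\proj_{G(\mathcal{R})} g$; here nonnegativity of $\rho$ is what guarantees that the thresholds $r \ge 0$ suffice to recover all of $\sigma(\rho)$. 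Applying Lemma~\ref{lem:pi-sys} to $\zeta$ and $\xi$ then yields $\zeta = \xi$ on $\sigma(\rho)$, i.e., $\int_E \rho\,\d\nu = \int_E g\,\d\nu$ for every $E \in \sigma(\proj_{G(\mathcal{R})} g)$, which is the assertion of the corollary.

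The genuinely delicate step, and the one I would flag as the main obstacle, is this last identification $\sigma(\mathcal{E}(g)) = \sigma(\proj_{G(\mathcal{R})} g)$ together with the bookkeeping that lets Lemma~\ref{lem:pi-sys} be applied over the sub-$\sigma$-algebra $\sigma(\rho) \subseteq \mathcal{B}$: one must confirm that restricting to level thresholds $r\ge 0$ loses nothing (this is precisely where $\rho \ge 0$ enters) and that the equal-mass hypothesis is supplied by the $E=\mathbb{R}$ case. Everything else is a direct transcription of Theorem~\ref{thm:key-lemma}.
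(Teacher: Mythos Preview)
Your proposal is correct and follows exactly the paper's argument: define the two finite measures $\zeta$ and $\xi$, use Theorem~\ref{thm:key-lemma} to get agreement on the $\pi$-system $\mathcal{E}(g)$ with equal total mass from the $E=\mathbb{R}$ case, invoke Lemma~\ref{lem:pi-sys} to extend to $\sigma(\mathcal{E}(g))$, and identify $\sigma(\mathcal{E}(g)) = (\proj_{G(\mathcal{R})} g)^{-1}(\mathcal{B})$ via Lemma~\ref{lem:sigma-preimage}. You supply more detail than the paper does---in particular the justification that $\rho\ge 0$ so that thresholds $r\ge 0$ suffice, and the observation that a chain is automatically a $\pi$-system---but the route is the same.
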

\begin{proof} 
Theorem~\ref{thm:key-lemma} says that \eqref{eq:3} holds for any $E\in\mathcal{E}(g)$. The left and right sides of \eqref{eq:3} define finite measures $\zeta$ and $\xi$ respectively, and $\zeta(\mathbb{R})=\xi(\mathbb{R})$ as in Step~\ref{it:ext} of the proof of Theorem~\ref{thm:key-lemma}. Since  $\mathcal{E}(g)$ is a $\pi$-system, $\sigma(\mathcal{E}(g))=(\proj_{ G(\mathcal{R})} g)^{-1}(\mathcal{B})$ by Lemmas~\ref{lem:pi-sys} and \ref{lem:sigma-preimage}. Hence \eqref{eq:3} holds for all $E\in (\proj_{ G(\mathcal{R})} g)^{-1}(\mathcal{B})$.
\end{proof}

We now arrive at the Radon--Nikodym property for $\mathcal{R}$ mentioned earlier.
\begin{corollary}[Radon--Nikodym over rays]\label{cor:RNP}
Let $\mu,\nu$ be Borel probability measures on $(\mathbb{R},\mathcal{B})$. Then the $\mathcal{R}$-measurable function 
\[
\rho_\mathcal{R}=\proj_{ G(\mathcal{R})} \frac{\d\mu}{\d\nu},
\]
and the $\pi$-system $\mathcal{E}(\d\mu/\d\nu)=\bigl\{\{\rho_\mathcal{R} >r\}: r\ge 0 \bigr\}\cup \{\mathbb{R} \} \subseteq \mathcal{R}$ satisfy
\[
\mu(E)=\int_E \rho_\mathcal{R}\,\d\nu
\]
for any $E\in \mathcal{E}(\d\mu/\d\nu)$.
\end{corollary}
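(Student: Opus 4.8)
The plan is to read off the corollary as essentially a restatement of Theorem~\ref{thm:key-lemma}, with the classical Radon--Nikodym theorem supplying the bridge between the integral identity and the measure $\mu$. Write $g \coloneqq \d\mu/\d\nu$, which we take to lie in $L^2_\p(\nu)$ so that the projection $\rho_\mathcal{R} = \proj_{ G(\mathcal{R})} g$ is well-defined in the first place. The point to observe is that, under this identification, the two objects named in the corollary coincide verbatim with those in Theorem~\ref{thm:key-lemma}: the $\pi$-system $\mathcal{E}(\d\mu/\d\nu)$ is exactly $\mathcal{E}(g)$ as defined in \eqref{eq:E(g)}, and $\rho_\mathcal{R}$ is exactly $\proj_{ G(\mathcal{R})} g$. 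So there is really nothing new to construct; the whole content has already been deposited in the key lemma.

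First I would invoke the classical Radon--Nikodym theorem. Since $\mu \ll \nu$ is built into the notation $\d\mu/\d\nu$, there is a nonnegative Borel function $g$ with $\mu(A) = \int_A g \, \d\nu$ for every $A \in \mathcal{B}$. As $\mathcal{E}(\d\mu/\d\nu) \subseteq \mathcal{R} \subseteq \mathcal{B}$, this identity in particular holds for every $E \in \mathcal{E}(\d\mu/\d\nu)$. Second, I would apply Theorem~\ref{thm:key-lemma} to $g$, which gives $\int_E \proj_{ G(\mathcal{R})} g \, \d\nu = \int_E g \, \d\nu$, i.e.\ $\int_E \rho_\mathcal{R}\,\d\nu = \int_E g\,\d\nu$, for every $E \in \mathcal{E}(g) = \mathcal{E}(\d\mu/\d\nu)$. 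Chaining the two displays yields
\[
\mu(E) = \int_E g \, \d\nu = \int_E \rho_\mathcal{R} \, \d\nu
\]
for every $E \in \mathcal{E}(\d\mu/\d\nu)$, which is precisely the claim. Note that, unlike Corollary~\ref{cor:generate}, we do not even need to pass to the generated $\sigma$-algebra $\sigma(\mathcal{E}(g))$, since the assertion is only over $\mathcal{E}(\d\mu/\d\nu)$ itself.

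The one genuine point deserving attention is the square-integrability of $g = \d\mu/\d\nu$. For two arbitrary Borel probability measures the Radon--Nikodym derivative is only guaranteed to lie in $L^1(\nu)$, whereas the metric projection $\proj_{ G(\mathcal{R})}$---and hence $\rho_\mathcal{R}$ itself---is defined only on $L^2(\nu)$; so the very statement of the corollary tacitly presupposes $\d\mu/\d\nu \in L^2_\p(\nu)$, under which the argument above is immediate. I expect this integrability requirement, rather than any measure-theoretic difficulty, to be the only obstacle. Should one wish to dispense with it, the natural remedy is a truncation $g_N \coloneqq g \wedge N \in L^2_\p(\nu)$ together with the sequential monotonicity of the projection and monotone convergence to let $N \to \infty$; the mild care needed there is that truncation alters the level sets $\{\rho_\mathcal{R} > r\}$, so one must track the convergence of the corresponding $\pi$-systems rather than applying the identity on a fixed $\mathcal{E}(g)$.
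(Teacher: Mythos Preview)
Your proof is correct and matches the paper's own argument essentially verbatim: apply Theorem~\ref{thm:key-lemma} with $g=\d\mu/\d\nu$ and combine with the defining property $\mu(E)=\int_E (\d\mu/\d\nu)\,\d\nu$ of the Radon--Nikodym derivative. Your remark on the tacit $L^2$ assumption is a fair observation that the paper leaves implicit as well.
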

\begin{proof}
For any $E\in \mathcal{E}(\d\mu/\d\nu)$, it follows from Theorem~\ref{thm:key-lemma} that
\[
\mu(E) = \int_E \frac{\d\mu}{\d\nu}\,\d\nu
= \int_E \proj_{ G(\mathcal{R})} \frac{\d\mu}{\d\nu}\,\d\nu
= \int_E \rho_\mathcal{R}\,\d\nu.\qedhere
\]
\end{proof}

We will show that the following definition of $f$-divergence over $\mathcal{R}$ provides an affirmative answer to Question~\ref{ques:1}: Theorem~\ref{thm:main-theorem} shows that (a) when $f$ is chosen to be  $f(t) = \lvert t - 1 \rvert/2$, we recover the Kolmogorov--Smirnov distance in \eqref{eq:KS}; and Definition~\ref{def:Rfdiv} shows that by replacing $\mathcal{R}$ by $\mathcal{B}$, we recover the standard $f$-divergence in \eqref{eq:f}. A reminder of our convention in Section~\ref{sec:meas}: we assume that $\mu\ll \nu$ whenever we write $\d \mu / \d \nu$. This is not an additional imposition; it is also a requirement for standard $f$-divergence in Definition~\ref{def:f}.
\begin{definition}[$f$-divergence over rays]\label{def:Rfdiv}
Let $\mu$ and $\nu$ be Borel probability measures and $\mathcal{R}$ be the class of rays. Then the $f$-divergence over $\mathcal{R}$ is defined as
\begin{equation}\label{eq:Rfdiv}
\D_f^\mathcal{R} (\mu \Vert \nu) \coloneqq \int  f\Bigl(\proj_{ G(\mathcal{R})} \frac{\d\mu}{\d\nu}\Bigr)\,\d\nu.
\end{equation}
\end{definition}
Henceforth we will use this terminology in conjunction with standard nomenclatures. For example when we speak of total variation or Hellinger distance over $\mathcal{R}$ we mean the $f$-divergence over $\mathcal{R}$ with $f(t) = \lvert t - 1 \rvert/2$ or $f(t) = (\sqrt{t} - 1)^2$ respectively.
Obviously, if we replace $\mathcal{R}$ in \eqref{eq:Rfdiv} by the Borel $\sigma$-algebra $\mathcal{B}$, then $\D_f^\mathcal{B} (\mu \Vert \nu) = \D_f(\mu \Vert \nu)$ and we recover the standard $f$-divergence in Definition~\ref{def:f}. So we may view the standard definition as ``$f$-divergence over $\mathcal{B}$.'' 

We will provide abundant evidence in Section~\ref{sec:ineq} that our $f$-divergence over $\mathcal{R}$ in Definition~\ref{def:Rfdiv} retains almost all known properties of the standard $f$-divergence. But our most rudimentary justification is Theorem~\ref{thm:main-theorem}, i.e., the total variation distance on $\mathcal{R}$ is exactly the Kolmogorov--Smirnov distance. To get there we need two  corollaries.
\begin{corollary}\label{cor:integral-proj-ineq} 
Let $g\in L^2_\p (\nu)$. Then for any $A\in\mathcal{R}$,
\[
\int_A \proj_{ G(\mathcal{R})} g \,\d\nu\ge \int_A g \,\d\nu.
\]
\end{corollary}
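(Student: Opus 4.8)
The plan is to reduce the statement to two results already in hand: the indicator inequality of Proposition~\ref{prop:proj-indicator-ineq} and the total-mass identity that is exactly the $E = \mathbb{R}$ case of Theorem~\ref{thm:key-lemma}. The crucial move is to apply these not to $g$ itself but to the truncated function $g\1_A$. This stays inside the relevant domain: since $g \in L^2_\p(\nu)$ and $\1_A$ is bounded and nonnegative, we have $g\1_A \in L^2_\p(\nu)$, so both cited results are applicable to it.

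First I would invoke Proposition~\ref{prop:proj-indicator-ineq}. Because $A \in \mathcal{R}$, it supplies the pointwise bound
\[
\proj_{G(\mathcal{R})}(g\1_A) \le (\proj_{G(\mathcal{R})} g)\cdot\1_A .
\]
Integrating both sides against $\nu$ and using that the right-hand integrand is supported on $A$ gives
\[
\int \proj_{G(\mathcal{R})}(g\1_A)\,\d\nu \le \int_A \proj_{G(\mathcal{R})} g\,\d\nu .
\]

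Next I would evaluate the left-hand side exactly. Applying the $E = \mathbb{R}$ conclusion of Theorem~\ref{thm:key-lemma} to the function $g\1_A$ yields
\[
\int \proj_{G(\mathcal{R})}(g\1_A)\,\d\nu = \int g\1_A\,\d\nu = \int_A g\,\d\nu .
\]
Chaining this identity with the previous inequality produces $\int_A \proj_{G(\mathcal{R})} g\,\d\nu \ge \int_A g\,\d\nu$, which is the claim.

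I do not expect a genuinely hard step once the right substitution is identified; the only points requiring care are feeding $g\1_A$ (rather than $g$) into both cited results and checking that this truncation remains in $L^2_\p(\nu)$ so that they legitimately apply. Accordingly, I anticipate that the main obstacle is conceptual rather than technical: recognizing that combining the ``restriction by an indicator lowers the projection'' inequality with the ``projection preserves total mass'' identity is already enough, after which the proof collapses to a two-line chain of one inequality and one equality.
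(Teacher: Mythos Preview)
Your proposal is correct and follows essentially the same approach as the paper: both apply Proposition~\ref{prop:proj-indicator-ineq} to bound $\proj_{G(\mathcal{R})}(g\1_A) \le (\proj_{G(\mathcal{R})} g)\cdot\1_A$, then invoke the $E=\mathbb{R}$ case of Theorem~\ref{thm:key-lemma} on $g\1_A$ to identify $\int \proj_{G(\mathcal{R})}(g\1_A)\,\d\nu = \int_A g\,\d\nu$. The paper presents this as a single displayed chain of one inequality and two equalities, which is exactly what you describe.
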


\begin{proof} 
By Proposition~\ref{prop:proj-indicator-ineq} and Theorem~\ref{thm:key-lemma}, we have
\[
\int_A \proj_{ G(\mathcal{R})} g \,\d\nu = \int  (\proj_{ G(\mathcal{R})} g)\cdot \1_A \,\d\nu \ge  \int  \proj_{ G(\mathcal{R})} (g \1_A) \,\d\nu =\int_A g \,\d\nu. \qedhere
\]
\end{proof}
The $\pi$-system $\mathcal{E}(g)$ as defined in \eqref{eq:E(g)} has the following property.
\begin{corollary} 
Let $g\in L^2_\p (\nu)$. Then for any $E\in\mathcal{E}(g)$,
\[
(\proj_{ G(\mathcal{R})} g) \cdot\1_E = \proj_{ G(\mathcal{R})} (g\1_E).
\]
\end{corollary}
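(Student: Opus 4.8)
The plan is to obtain one inequality for free from Proposition~\ref{prop:proj-indicator-ineq} and then promote it to an equality using an integral identity supplied by Theorem~\ref{thm:key-lemma}. Throughout write $p \coloneqq \proj_{ G(\mathcal{R})} g$ and $q \coloneqq \proj_{ G(\mathcal{R})}(g\1_E)$, and recall that $E \in \mathcal{E}(g) \subseteq \mathcal{R}$.

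First I would record the easy direction. Since $E \in \mathcal{R}$, Proposition~\ref{prop:proj-indicator-ineq} gives $q = q\cdot\1_E \le p\cdot\1_E$ almost everywhere, so the difference $p\1_E - q$ is nonnegative a.e.\ (and both functions vanish on $E^{\mathsf c}$). The entire content of the corollary is therefore to show that this nonnegative difference is in fact zero.

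The key step is to verify that $q$ and $p\1_E$ carry the same total $\nu$-mass. Applying Theorem~\ref{thm:key-lemma} to the function $g\1_E \in L^2_\p(\nu)$ at the set $\mathbb{R} \in \mathcal{E}(g\1_E)$ gives $\int q\,\d\nu = \int g\1_E\,\d\nu$. Separately, applying Theorem~\ref{thm:key-lemma} to $g$ itself at the set $E \in \mathcal{E}(g)$ gives $\int_E p\,\d\nu = \int_E g\,\d\nu$, that is, $\int p\1_E\,\d\nu = \int g\1_E\,\d\nu$. Chaining the two identities yields $\int (p\1_E - q)\,\d\nu = 0$. Since $p\1_E - q \ge 0$ almost everywhere, a nonnegative integrand with vanishing integral must vanish a.e., giving $q = p\1_E$, which is precisely the claim.

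The only point that warrants care---the mild obstacle here---is the legitimacy of invoking Theorem~\ref{thm:key-lemma} at $\mathbb{R}$ for the truncated function $g\1_E$ rather than for $g$. This is harmless because $g\1_E \in L^2_\p(\nu)$ (immediate, as $g \in L^2_\p(\nu)$ and $\1_E$ is bounded and measurable) and because $\mathbb{R}$ belongs to $\mathcal{E}(\,\cdot\,)$ for every admissible function by the definition in \eqref{eq:E(g)}. Everything else is the elementary fact that a nonnegative function of zero integral is null.
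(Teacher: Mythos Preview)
Your proof is correct and follows essentially the same approach as the paper: combine the pointwise inequality from Proposition~\ref{prop:proj-indicator-ineq} with two applications of Theorem~\ref{thm:key-lemma} (to $g$ at $E$ and to $g\1_E$ at $\mathbb{R}$) to force the nonnegative difference $p\1_E - q$ to have zero integral, hence to vanish. The paper's write-up is more terse but the logic is identical.
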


\begin{proof} 
By Theorem~\ref{thm:key-lemma},
\[
\int (\proj_{ G(\mathcal{R})} g)\cdot\1_E \,\d\nu = \int g\1_E \,\d\nu = \int \proj_{ G(\mathcal{R})} (g\1_E) \,\d\nu.
\]
Since  $(\proj_{ G(\mathcal{R})} g)\cdot\1_E \ge  \proj_{ G(\mathcal{R})} (g\1_E)$, we must in fact have equality.
\end{proof}

We now arrive at the main result of this section. The key to Theorem~\ref{thm:main-theorem} is Theorem~\ref{thm:key-lemma}. If we have an analogue of Theorem~\ref{thm:key-lemma} for a class of sets $\mathcal{C}$, then we have an ``$f$-divergence on $\mathcal{C}$'' for which an analogue of Theorem~\ref{thm:main-theorem} with $\mathcal{C}$ in place of $\mathcal{R}$ holds.
\begin{theorem}[Kolmogorov--Smirnov as special case]\label{thm:main-theorem}
Let $\mu,\nu$ be Borel probability measures on $(\mathbb{R},\mathcal{B})$. Then
\begin{equation}\label{eq:ks}
\int \frac{1}{2} \Bigl\lvert \proj_{ G(\mathcal{R})} \frac{\d\mu}{\d\nu}-1\Bigr\rvert \,\d\nu = \sup_{A\in \mathcal{R}} \, \bigl( \mu(A)-\nu(A) \bigr).
\end{equation}
\end{theorem}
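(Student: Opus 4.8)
The plan is to introduce $g \coloneqq \d\mu/\d\nu$ and $\rho \coloneqq \proj_{G(\mathcal{R})} g$, reducing both sides of \eqref{eq:ks} to integrals involving the single nonincreasing function $\rho$. The structural fact that drives everything is that, since $\rho \in G(\mathcal{R})$, its superlevel set $A^* \coloneqq \{\rho > 1\}$ is itself a ray, and moreover $A^* \in \mathcal{E}(g)$ because the threshold $r = 1 \ge 0$. This one ray will simultaneously be the maximizer of the right-hand side and reproduce the left-hand side exactly.

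First I would rewrite the left-hand side. Taking $E = \mathbb{R}$ in Theorem~\ref{thm:key-lemma} gives $\int \rho\,\d\nu = \int g\,\d\nu = \mu(\mathbb{R}) = 1$, so $\int (\rho - 1)\,\d\nu = 0$. Splitting $\int |\rho - 1|\,\d\nu$ over $A^*$ and its complement and using this vanishing total shows that the two contributions are equal, hence
\[
\int \tfrac{1}{2}\bigl\lvert \rho - 1\bigr\rvert\,\d\nu = \int_{A^*} (\rho - 1)\,\d\nu .
\]

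Next I would bound the right-hand side above by this quantity. For any $A \in \mathcal{R}$, writing $\mu(A) - \nu(A) = \int_A (g - 1)\,\d\nu$ and invoking Corollary~\ref{cor:integral-proj-ineq}, which gives $\int_A g\,\d\nu \le \int_A \rho\,\d\nu$ for rays, yields $\mu(A) - \nu(A) \le \int_A (\rho - 1)\,\d\nu$. Since $\rho - 1 \le 0$ off $A^*$, discarding the negative part gives $\int_A (\rho - 1)\,\d\nu \le \int_{A^*}(\rho - 1)\,\d\nu$, so $\sup_{A \in \mathcal{R}}\bigl(\mu(A) - \nu(A)\bigr) \le \int_{A^*}(\rho - 1)\,\d\nu$. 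For the matching lower bound I would exhibit $A^*$ itself as an admissible ray: because $A^* \in \mathcal{E}(g)$, Theorem~\ref{thm:key-lemma} applies with $E = A^*$ to give $\int_{A^*} \rho\,\d\nu = \int_{A^*} g\,\d\nu = \mu(A^*)$, whence $\mu(A^*) - \nu(A^*) = \int_{A^*}(\rho - 1)\,\d\nu$. As $A^* \in \mathcal{R}$, this is a lower bound for the supremum, and combining the two inequalities with the rewritten left-hand side closes the argument.

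The only real subtlety—and the place where the heavy lifting has already been done—is that $\int_A g\,\d\nu \le \int_A \rho\,\d\nu$ holds for all rays but becomes an \emph{equality} precisely on the $\pi$-system $\mathcal{E}(g)$. Matching the two sides therefore hinges on the chosen extremal ray $A^* = \{\rho > 1\}$ lying in $\mathcal{E}(g)$, so that Theorem~\ref{thm:key-lemma} converts the projection integral $\int_{A^*}\rho\,\d\nu$ back into the honest measure $\mu(A^*)$; everything else is routine manipulation of the normalization $\int \rho\,\d\nu = 1$.
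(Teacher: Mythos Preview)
Your proof is correct and is essentially the same as the paper's: both identify the extremal ray $A^* = \{\rho > 1\} \in \mathcal{E}(g)$, use Theorem~\ref{thm:key-lemma} at $E = \mathbb{R}$ and $E = A^*$ to get the normalization and the equality $\int_{A^*}\rho\,\d\nu = \mu(A^*)$, and use Corollary~\ref{cor:integral-proj-ineq} for the opposite inequality. The only cosmetic difference is the order in which the two inequalities are established and your slightly more direct pointwise argument that $\int_A(\rho-1)\,\d\nu \le \int_{A^*}(\rho-1)\,\d\nu$ in place of the paper's intermediate supremum over $\int_A \rho\,\d\nu - \nu(A)$.
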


\begin{proof} 
Let $E=\{\proj_{ G(\mathcal{R})} \d\mu/\d\nu > 1\}$. By  Theorem~\ref{thm:key-lemma} and Corollary~\ref{cor:RNP},
\[
\int_A \proj_{ G(\mathcal{R})} \frac{\d\mu}{\d\nu}\,\d\nu = \int_A \frac{\d\mu}{\d\nu} \,\d\nu = \mu(A)
\] 
for any  $A\in \mathcal{E}(\d\mu / \d\nu)$. So
\begin{align}
\int \frac{1}{2} \Bigl\lvert \proj_{ G(\mathcal{R})} \frac{\d\mu}{\d\nu}-1\Bigr\rvert \,\d\nu &=
\frac{1}{2}\int_E \proj_{ G(\mathcal{R})} \frac{\d\mu}{\d\nu}-1 \,\d\nu + \frac{1}{2}\int_{E^{\mathsf{c}} }  1- \proj_{ G(\mathcal{R})} \frac{\d\mu}{\d\nu} \,\d\nu \notag	\\
&=\frac{1}{2}\Bigl[2\int_E \proj_{ G(\mathcal{R})} \frac{\d\mu}{\d\nu}\,\d\nu - 1\Bigr] - \frac{1}{2}\Bigl[ 2\int_E \,\d\nu -1\Bigr] \notag \\
&=\int_E \proj_{ G(\mathcal{R})} \frac{\d\mu}{\d\nu}\,\d\nu - \nu(E) \label{eq:same} \\
&=\mu(E)-\nu(E) \le \sup_{A\in \mathcal{R}}\, \bigl(\mu(A)-\nu(A)\bigr), \notag
\end{align}
noting that $E=\{\proj_{ G(\mathcal{R})} \d\mu / \d\nu > 1\}\in \mathcal{R}$ as $\proj_{ G(\mathcal{R})}\d\mu / \d\nu \in  G(\mathcal{R})$.  For the reverse inequality, by Corollary~\ref{cor:integral-proj-ineq}, we have
\[
\int_A \Bigl( \proj_{ G(\mathcal{R})} \frac{\d\mu}{\d\nu} - 1 \Bigr) \d\nu \ge \mu(A)-\nu(A),
\]
for any $A\in\mathcal{R}$. By \eqref{eq:same},
\begin{align*}
\int  \frac{1}{2}  \Bigl\lvert \proj_{ G(\mathcal{R})} \frac{\d\mu}{\d\nu}-1\Bigr\rvert \,\d\nu &=	\int_E \Bigl( \proj_{ G(\mathcal{R})} \frac{\d\mu}{\d\nu}-1 \Bigr) \d\nu \\
&=\sup_{A\in\mathcal{R}}\Bigl( \int_A \proj_{ G(\mathcal{R})} \frac{\d\mu}{\d\nu}\,\d\nu -\nu(A)\Bigr) \ge \sup_{A\in\mathcal{R}} \, \bigl(\mu(A)-\nu(A)\bigr)
\end{align*}
as required.
\end{proof}
 

\section{Properties of the $f$-divergences over rays}\label{sec:ineq}

Theorem~\ref{thm:main-theorem} serves as an indication that our notion of $f$-divergence over rays in Definition~\ref{def:Rfdiv} is the right one because it restricts to the Kolmogorov--Smirnov distance for a special choice of $f$. In this section, we will see that just about every property that we know holds for $f$-divergences also holds for $f$-divergences over $\mathcal{R}$. Henceforth we will denote the convex cone of functions convex on $[0, \infty)$ and both vanishing and strictly convex at $1$ by
\begin{equation}\label{eq:mathcalF}
\mathcal{F} \coloneqq \{f : [0,\infty) \to \mathbb{R} : f \text{ is convex, } f \text{ is strictly convex at 1,  and } f(1)=0\}.	 
\end{equation}
The strict convexity at $1$ is as in  \cite[p.~120]{Polyanskiy_Wu_2024}: For all $x,y\in [0,\infty)$ and $t \in (0,1)$ such that $t x+(1- t)y=1$, we have $t f(x)+(1- t)f(y)>0$. This condition guarantees that $\D_f(\mu\Vert\nu)=0$ if and only if $\mu=\nu$. As a precaution, in situations like the Kullback--Leibler divergence where $f(x) = x \log x$, the value $f(0)$ may only be defined in a limiting sense.

Again $\mu,\nu$ will denote Borel probability measures on $(\mathbb{R},\mathcal{B})$ throughout this section, with the implicit assumption that $\mu\ll \nu$ whenever we speak of their Radon--Nikodym derivative  $\d\mu / \d\nu$. 
\begin{theorem}\label{thm:prop}
Let $f, g \in \mathcal{F}$. Then the $f$-divergence over $\mathcal{R}$ has the following properties:
\begin{enumerate}[\upshape (i)]
\item\label{it:lin} Linearity: $\D_{\alpha f + \beta g}^\mathcal{R} (\mu \Vert \nu) = \alpha \D_f^\mathcal{R}(\mu \Vert \nu) + \beta \D_g^\mathcal{R}(\mu \Vert \nu)$ for any $\alpha, \beta\ge 0$.
\item\label{it:pos} Nonnegativity: $\D_f^\mathcal{R}(\mu \Vert \nu) \ge 0$.
\item\label{it:aff} Affine invariance: If $g(x)=f(x)+c(x-1)$, $c \in \mathbb{R}$, then $\D_f^\mathcal{R}(\mu \Vert \nu) = \D_g^\mathcal{R}(\mu \Vert \nu)$.
\item\label{it:bd} Boundedness: $\D_f^\mathcal{R}(\mu \Vert \nu)\le \D_f(\mu \Vert \nu)$.
\item\label{it:id} Identity: If $\D_f^\mathcal{R}(\mu \Vert \nu) =0 = \D_f^\mathcal{R}(\nu \Vert \mu)$, then $\mu=\nu$.
\end{enumerate}
\end{theorem}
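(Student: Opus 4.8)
The plan is to treat the five properties in order of increasing difficulty, writing $g \coloneqq \d\mu/\d\nu$ and $\rho \coloneqq \proj_{G(\mathcal{R})} g$ throughout and leaning repeatedly on a single identity: $\int \rho\,\d\nu = \int g\,\d\nu = \mu(\mathbb{R}) = 1$, furnished by the $E=\mathbb{R}$ case of Theorem~\ref{thm:key-lemma} (equivalently Corollary~\ref{cor:RNP}). Linearity is then immediate, since $\rho$ depends only on $g$ and not on $f$, so $(\alpha f+\beta g)(\rho)$ integrates termwise. For affine invariance, writing $g_0(x)=f(x)+c(x-1)$ gives $\int g_0(\rho)\,\d\nu = \int f(\rho)\,\d\nu + c\int(\rho-1)\,\d\nu$, and the last integral vanishes by the mean-one identity; one checks in passing that $g_0\in\mathcal{F}$, as adding a linear term preserves convexity, the vanishing at $1$, and strict convexity at $1$. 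Nonnegativity is Jensen's inequality applied to the convex $f$ and the mean-one $\rho$: $\int f(\rho)\,\d\nu \ge f\bigl(\int \rho\,\d\nu\bigr)=f(1)=0$.

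The first substantive step is boundedness, where the plan is to recognize $\rho$ as a conditional expectation. Corollary~\ref{cor:generate} shows $\int_E \rho\,\d\nu = \int_E g\,\d\nu$ for every $E \in \sigma(\rho)=\rho^{-1}(\mathcal{B})$; since $\rho$ is $\sigma(\rho)$-measurable, this says exactly that $\rho = \mathbb{E}_\nu[g \mid \sigma(\rho)]$. Conditional Jensen then yields $f(\rho)\le \mathbb{E}_\nu[f(g)\mid\sigma(\rho)]$ $\nu$-almost everywhere, and integrating removes the conditioning: $\D_f^\mathcal{R}(\mu\Vert\nu)=\int f(\rho)\,\d\nu \le \int f(g)\,\d\nu = \D_f(\mu\Vert\nu)$. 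If $\D_f(\mu\Vert\nu)=\infty$ there is nothing to prove, so one may assume $f(g)\in L^1(\nu)$ to make the conditioning licit.

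The identity property is where I expect the real difficulty, because projection onto the non-symmetric cone $G(\mathcal{R})$ discards information — an increasing density projects to the constant $1$ — so a single vanishing divergence cannot force $\mu=\nu$, and both hypotheses must genuinely be used. The plan is first to prove that $\D_f^\mathcal{R}(\mu\Vert\nu)=0$ implies $\rho = 1$ $\nu$-almost everywhere. Using affine invariance I subtract a subgradient of $f$ at $1$ to assume $f\ge 0$ with $f(1)=0$; then $\int f(\rho)\,\d\nu=0$ forces $f(\rho)=0$ a.e., so $\rho$ lies a.e.\ in the interval $\{f=0\}\ni 1$. Strict convexity at $1$ prevents this interval from straddling $1$ (otherwise $1$ would be a nontrivial convex combination of two distinct zeros of $f$), so $\{f=0\}\subseteq[1,\infty)$ or $\{f=0\}\subseteq[0,1]$; either way $\rho$ is a.e.\ $\ge 1$ or a.e.\ $\le 1$, and the mean-one constraint $\int\rho\,\d\nu=1$ squeezes $\rho\equiv 1$. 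With $\rho=1$, Corollary~\ref{cor:integral-proj-ineq} gives $\nu(A)=\int_A\rho\,\d\nu \ge \mu(A)$ for every $A\in\mathcal{R}$; the symmetric hypothesis $\D_f^\mathcal{R}(\nu\Vert\mu)=0$ supplies the reverse inequality, so $\mu$ and $\nu$ agree on the generating $\pi$-system $\mathcal{R}$, whence $\mu=\nu$ by Lemma~\ref{lem:pi-sys}. The points requiring care are the equality analysis of the zero-set of $f$ under strict convexity at $1$, the asymmetry argument explaining why one-sided vanishing is insufficient, and the integrability ensuring that $\rho$ and the conditional-expectation identity are well defined.
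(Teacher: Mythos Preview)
Your proof is correct. Parts (i)--(iii) match the paper exactly, but for (iv) and (v) you take different and more direct routes. For boundedness, the paper proves a separate lemma that $\int \rho\cdot(\varphi\circ\rho)\,\d\nu = \int g\cdot(\varphi\circ\rho)\,\d\nu$ for nondecreasing $\varphi$ (Lemma~\ref{lem:integral-proj-eq}), then integrates the subgradient inequality $f(\rho)-f(g)\le f'(\rho)(\rho-g)$ (Proposition~\ref{prop:cvxineq}); your recognition of $\rho=\mathbb{E}_\nu[g\mid\sigma(\rho)]$ via Corollary~\ref{cor:generate} followed by conditional Jensen is cleaner and also avoids the almost-sure-continuity hypothesis that Proposition~\ref{prop:cvxineq} carries. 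For the identity, the paper takes a substantial detour: it builds a transfer principle (Propositions~\ref{prop:Rfdiv-in-fdiv}--\ref{prop:fdiv-in-Rfdiv}, Corollary~\ref{cor:Rfdiv-iff-fdiv}) showing that any inequality between standard $f$-divergences holds verbatim over $\mathcal{R}$, then imports the known bound $\bar f(\D_{\tv})\le\D_f$ to obtain Corollary~\ref{cor:lowerbound} and reduce to the total-variation case. Your direct argument---analyze the zero set of the normalized $f$, use the mean-one constraint to force $\rho\equiv1$, then invoke Corollary~\ref{cor:integral-proj-ineq} to get $\nu(A)\ge\mu(A)$ on $\mathcal{R}$---is more elementary and self-contained. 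The paper's detour, however, pays for itself elsewhere: the transfer principle is reused to port the entire catalogue of classical $f$-divergence inequalities to $\mathcal{R}$ (Theorem~\ref{thm:rel}), which your approach does not furnish.
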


\begin{proof} 
Item~\ref{it:lin} is routine from definition. For item~\ref{it:pos}, since $f$ is convex, applying Jensen's inequality and Theorem~\ref{thm:key-lemma} gives
\[
\int  f\Bigl(\proj_{ G(\mathcal{R})} \frac{\d\mu}{\d\nu}\Bigr)\,\d\nu \ge f\Bigl(\int  \proj_{ G(\mathcal{R})} \frac{\d\mu}{\d\nu}\,\d\nu \Bigr)	=f\Bigl(\int   \frac{\d\mu}{\d\nu}\,\d\nu \Bigr) =f(1)=0.
\]
Item~\ref{it:aff} follows from
\begin{align*}
	\D_g^\mathcal{R} (\mu \Vert \nu) &= \int  f\Bigl(\proj_{ G(\mathcal{R})}\frac{\d\mu}{\d\nu}\Bigr) + c\Bigl(\proj_{ G(\mathcal{R})} \frac{\d\mu}{\d\nu} - 1\Bigr) \,\d\nu \\
	&=\int  f\Bigl(\proj_{ G(\mathcal{R})}\frac{\d\mu}{\d\nu}\Bigr) \,\d\nu + c \int \Bigl( \proj_{ G(\mathcal{R})} \frac{\d\mu}{\d\nu} -1 \Bigr) \d\nu =\D_f^\mathcal{R}(\mu \Vert \nu),
\end{align*}
noting that the last integral vanishes. The proofs for items~\ref{it:bd} and \ref{it:id} are considerably more involved and will be deferred to Proposition~\ref{prop:cvxineq} and Corollary~\ref{cor:lowerbound} respectively.
\end{proof}

We caution the reader that special properties of a specific $f$-divergence may sometimes be lost for its counterpart over $\mathcal{R}$. For example, it is well-known that while $f$-divergence is generally not symmetric, i.e., $\D_f(\mu \Vert \nu) \ne \D_f(\nu \Vert \mu)$, the total variation distance is. This symmetry is lost for the total variation distance over $\mathcal{R}$ as
\[
\D_{\tv}^\mathcal{R}(\mu \Vert \nu)=\int \frac{1}{2}\Bigl\lvert \proj_{ G(\mathcal{R})}\frac{\d\mu}{\d\nu} -1\Bigr\rvert\,\d\nu =\sup_{A\in\mathcal{R}}\,\bigl(\mu(A)-\nu(A)\bigr);
\]
although one may recover it by symmetrizing:
\begin{equation}\label{eq:dist}
\smash[t]{\widehat{\D}}_{\tv}^\mathcal{R}(\mu \Vert \nu) \coloneqq \max\bigl\{\D_{\tv}^\mathcal{R}(\mu \Vert \nu),\D_{\tv}^\mathcal{R}(\nu \Vert \mu) \bigr\}=\sup_{A\in\mathcal{R}}\, \lvert \mu(A)-\nu(A) \rvert.
\end{equation}
Strictly speaking, it is this symmetrized version $\smash[t]{\widehat{\D}}_{\tv}^\mathcal{R}$ that should be called the Kolmogorov--Smirnov \emph{distance} \cite{kelbert2023} although we have been loosely using the term for the \emph{divergence} $\D_{\tv}^\mathcal{R}$ as well.

This is also the reason why we need both $\D_f^\mathcal{R}(\mu \Vert \nu) =0$ and $\D_f^\mathcal{R}(\nu \Vert \mu) = 0$ in Theorem~\ref{thm:prop}\ref{it:id}. It would not be true otherwise. We illustrate this in Figure~\ref{fig:three_images} with the total variation and Hellinger distances over $\mathcal{R}$. One can see that   $\D_f^\mathcal{R}(\mu \Vert  \nu) = 0$  or  $\D_f^\mathcal{R}(\nu \Vert  \mu) = 0$   does not necessarily imply  $\mu = \nu$ but that $\D_f^\mathcal{R}(\mu \Vert  \nu) = \D_f^\mathcal{R}(\nu \Vert  \mu) = 0$ does.
\begin{figure}[htp]
    \centering
        \hfill
    \begin{subfigure}[b]{0.23\textwidth}
        \centering
        \includegraphics[width=\textwidth]{./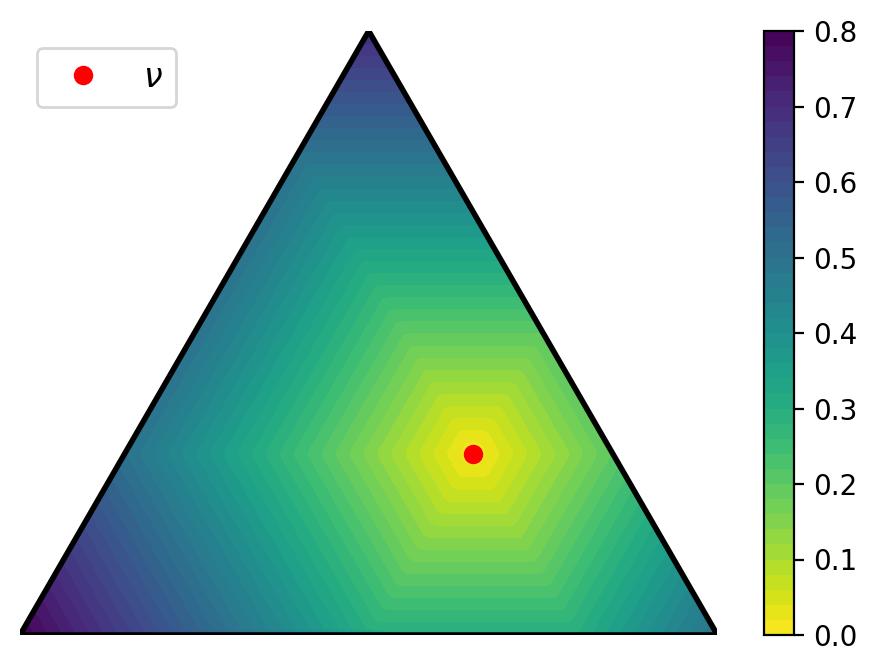}
        \caption{$\D_{\tv}(\mu \Vert \nu)$}
        \label{fig:image1}
    \end{subfigure}
    \hfill
    \begin{subfigure}[b]{0.23\textwidth}
        \centering
        \includegraphics[width=\textwidth]{./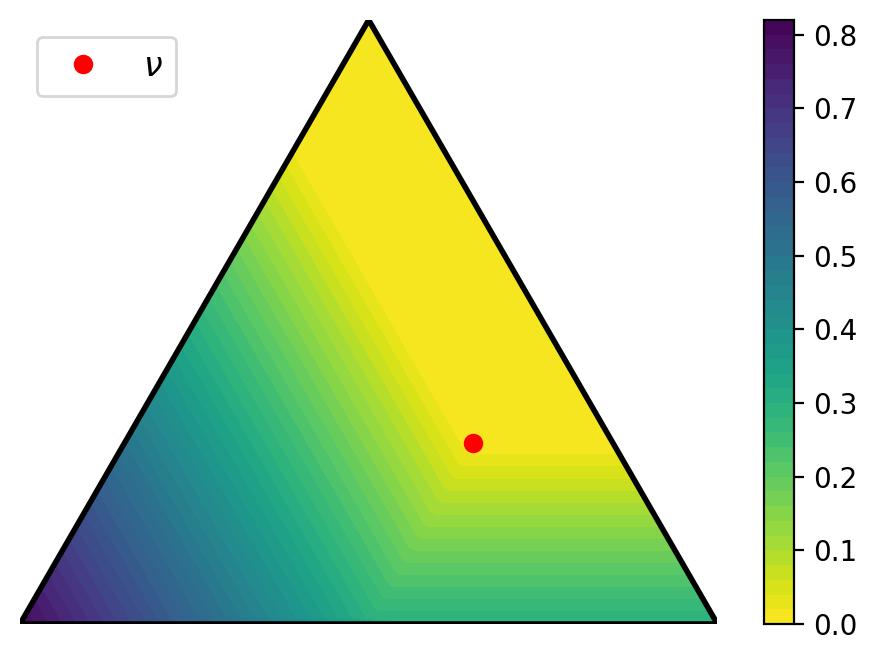}
        \caption{$\D_{\tv}^\mathcal{R}(\mu \Vert \nu)$}
        \label{fig:image2}
    \end{subfigure}
    \hfill
    \begin{subfigure}[b]{0.23\textwidth}
        \centering
        \includegraphics[width=\textwidth]{./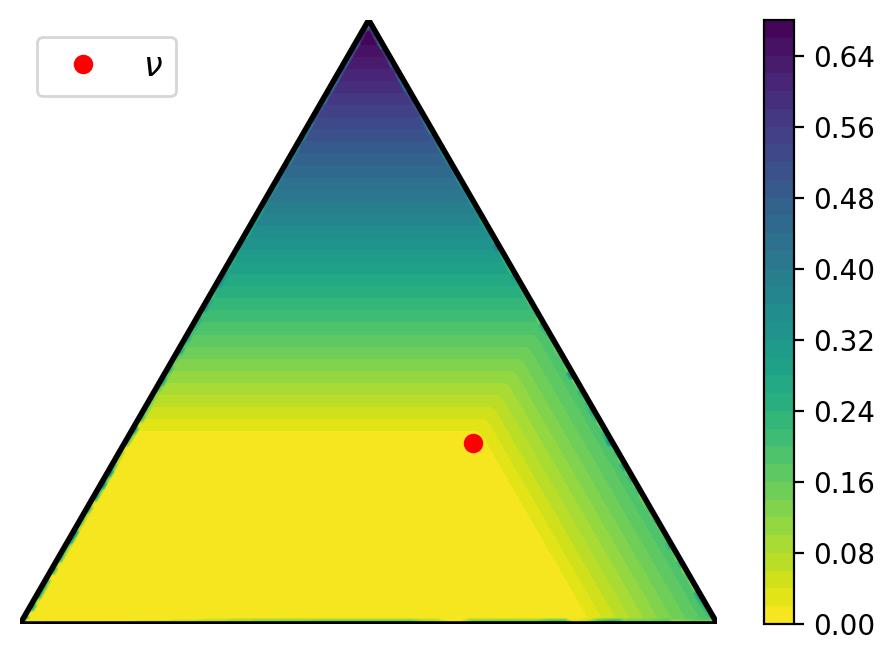}
        \caption{$\D_{\tv}^\mathcal{R}(\nu \Vert \mu)$}
        \label{fig:image3}
    \end{subfigure}
        \hfill
    
    \hfill
        \begin{subfigure}[b]{0.23\textwidth}
        \centering
        \includegraphics[width=\textwidth]{./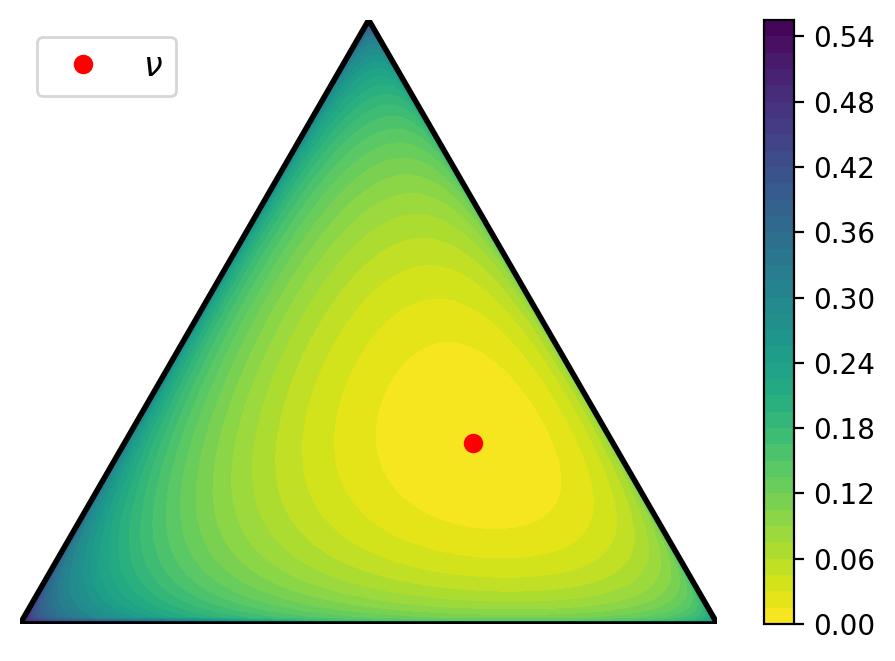}
        \caption{$\D_{\hl}(\mu \Vert \nu)$}
        \label{fig:image1}
    \end{subfigure}
    \hfill
    \begin{subfigure}[b]{0.23\textwidth}
        \centering
        \includegraphics[width=\textwidth]{./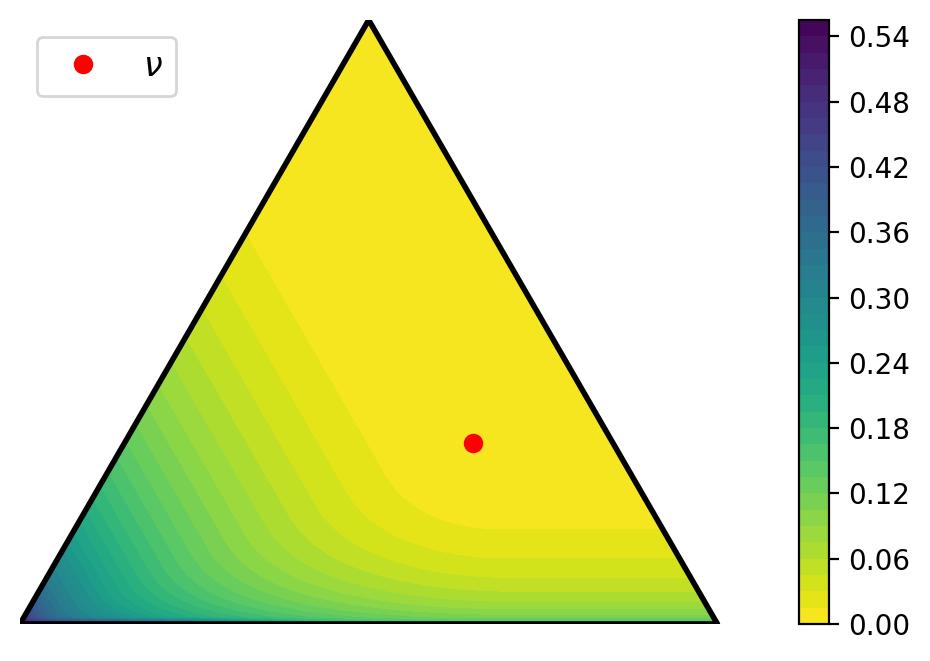}
        \caption{$\D_{\hl}^\mathcal{R}(\mu \Vert \nu)$}
        \label{fig:image2}
    \end{subfigure}
    \hfill
    \begin{subfigure}[b]{0.23\textwidth}
        \centering
        \includegraphics[width=\textwidth]{./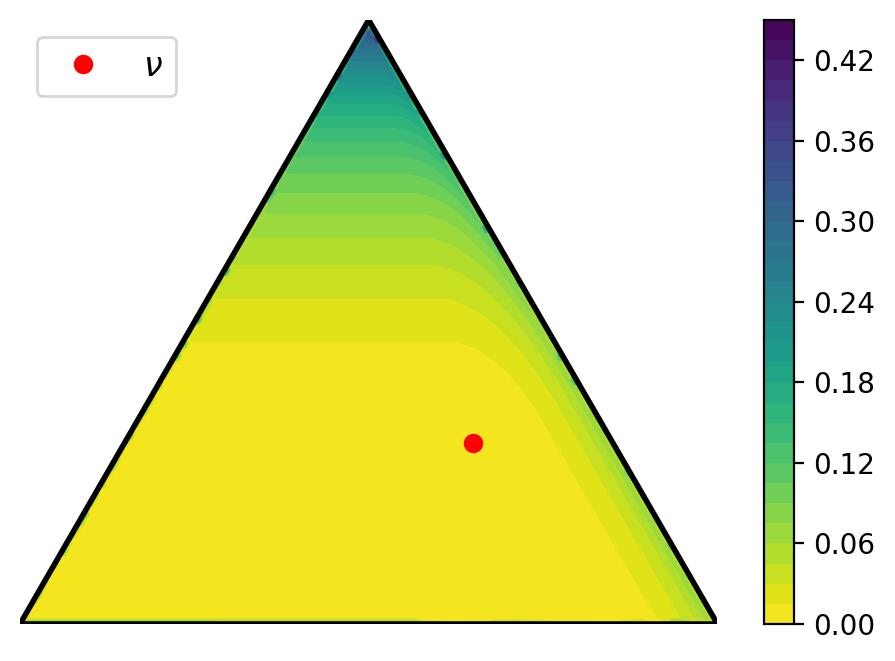}
        \caption{$\D_{\hl}^\mathcal{R}(\nu \Vert \mu)$}
        \label{fig:image3}
    \end{subfigure}
        \hfill
    \caption{40 Level curves of total variation $\D_{\tv}$, total variation over rays $\D_{\tv}^\mathcal{R}$, Hellinger distance $\D_{\hl}$ and Hellinger distance over rays $\D_{\hl}^\mathcal{R}$ for fixed $\nu=[0.2,0.5,0.3]$ as $\mu$ ranges over the simplex of distributions on a three-element set.}
    \label{fig:three_images}
\end{figure}

Evidently, the projection operator $\proj_{ G(\mathcal{R})}$ plays a key role in our definition of $f$-divergence over $\mathcal{R}$. The next two results show how it interacts with nondecreasing functions and with convex functions.
\begin{lemma}\label{lem:integral-proj-eq} 
Let $f : \mathbb{R} \to \mathbb{R}$ be nondecreasing. If $g\in L^2_\p (\nu)$ is almost surely continuous, then
\[
\int  (\proj_{ G(\mathcal{R})} g) \cdot (f\circ \proj_{ G(\mathcal{R})} g) \,\d\nu = \int   g \cdot (f\circ \proj_{ G(\mathcal{R})} g)\,\d\nu.
\]
\end{lemma}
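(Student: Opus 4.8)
The plan is to recognize the asserted identity as the statement that a certain signed measure annihilates $f\circ\proj_{ G(\mathcal{R})} g$. Write $\rho\coloneqq\proj_{ G(\mathcal{R})} g$, which is nonincreasing by Proposition~\ref{prop:FR}. Since $f$ is nondecreasing it is Borel, so $f\circ\rho$ is a Borel function of $\rho$ and hence $\sigma(\rho)$-measurable, where $\sigma(\rho)=\rho^{-1}(\mathcal{B})$; it is moreover itself nonincreasing. Subtracting the two sides, the claim is exactly
\[
\int (f\circ\rho)\,(\rho-g)\,\d\nu=0.
\]
I would introduce the finite signed measure $\lambda(A)\coloneqq\int_A(\rho-g)\,\d\nu$ for $A\in\mathcal{B}$, so that the goal becomes $\int (f\circ\rho)\,\d\lambda=0$.

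The crucial input is Corollary~\ref{cor:generate} (which rests on Theorem~\ref{thm:key-lemma}): it gives $\int_E\rho\,\d\nu=\int_E g\,\d\nu$ for every $E\in\rho^{-1}(\mathcal{B})=\sigma(\rho)$, that is, $\lambda(E)=0$ for every $E\in\sigma(\rho)$. Thus $\lambda$ vanishes on the entire sub-$\sigma$-algebra $\sigma(\rho)$. I would then argue that every $\sigma(\rho)$-measurable, $\lambda$-integrable function integrates to zero against $\lambda$: for a $\sigma(\rho)$-measurable simple function $\sum_k c_k\1_{E_k}$ with $E_k\in\sigma(\rho)$ one has $\int\sum_k c_k\1_{E_k}\,\d\lambda=\sum_k c_k\lambda(E_k)=0$, and $f\circ\rho$, being $\sigma(\rho)$-measurable, is a pointwise limit of such simple functions. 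It then only remains to pass to the limit.

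The main obstacle is precisely this limit passage, since $f\circ\rho$ need not be bounded. I would control it by truncation: set $\phi_M\coloneqq\max\{-M,\min\{f\circ\rho,M\}\}$, which is bounded and $\sigma(\rho)$-measurable, so $\int\phi_M\,\d\lambda=0$ for every $M$ by uniform approximation by $\sigma(\rho)$-simple functions together with finiteness of $\lambda$. Since $\phi_M\to f\circ\rho$ pointwise with $\lvert\phi_M\rvert\le\lvert f\circ\rho\rvert$, dominated convergence with respect to the total variation measure $\lvert\lambda\rvert$ yields $\int (f\circ\rho)\,\d\lambda=0$, provided $f\circ\rho\in L^1(\lvert\lambda\rvert)$. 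This integrability is exactly what makes the two sides of the lemma finite: $\lvert\lambda\rvert\le(\rho+g)\,\d\nu$, whence $\int\lvert f\circ\rho\rvert\,\d\lvert\lambda\rvert\le\int\lvert f\circ\rho\rvert\,\rho\,\d\nu+\int\lvert f\circ\rho\rvert\,g\,\d\nu$, which I take to be finite (otherwise the stated equality is read as an identity of infinities). This completes the argument.

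As an alternative more in keeping with the almost-sure continuity hypothesis, I would approximate $g$ from below by an increasing sequence of nonnegative ordered simple functions $(h_n)$ as in Proposition~\ref{prop:gf}, so that each $\proj_{ G(\mathcal{R})} h_n$ is ordered simple on the same partition by Proposition~\ref{prop:proj}. For a single ordered simple $h=\sum_i\alpha_i\1_{A_i}$ with projection weights $\beta_1\ge\dots\ge\beta_n\ge0$ and $\omega_i=\nu(A_i)$, the identity reduces to $\sum_i(\beta_i-\alpha_i)f(\beta_i)\omega_i=0$. Writing $P_k\coloneqq\sum_{i=1}^k\omega_i(\beta_i-\alpha_i)$ and summing by parts, this equals $P_nf(\beta_n)+\sum_{k=1}^{n-1}P_k\bigl(f(\beta_k)-f(\beta_{k+1})\bigr)$; here $P_n=0$ by Step~\ref{it:R} of Theorem~\ref{thm:key-lemma}, while each remaining summand vanishes because $f(\beta_k)=f(\beta_{k+1})$ when $\beta_k=\beta_{k+1}$ and, when $\beta_k>\beta_{k+1}$, the KKT complementary slackness \eqref{eq:lk} forces $P_k=0$. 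Passing to the limit $h_n\uparrow g$, after reducing to left-continuous $f$ (which changes neither side, by the same signed-measure reasoning applied to the countably supported difference), recovers the general case; this route also explains why almost-sure continuity is imposed.
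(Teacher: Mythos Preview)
Your primary approach via the signed measure $\lambda(A)=\int_A(\rho-g)\,\d\nu$ is correct and takes a genuinely different route from the paper. You invoke Corollary~\ref{cor:generate} to obtain $\lambda\equiv 0$ on all of $\sigma(\rho)=\rho^{-1}(\mathcal{B})$, then integrate the $\sigma(\rho)$-measurable function $f\circ\rho$ against $\lambda$ by truncation and dominated convergence with respect to $\lvert\lambda\rvert$. The paper instead works at the level of ordered simple functions: it rewrites $f\circ\proj_{G(\mathcal{R})}h$ as a telescoping combination $\sum_i\zeta_i\1_{E_i}$ with nested $E_i\in\mathcal{E}(h)$, applies Theorem~\ref{thm:key-lemma} termwise, and then passes to the limit $h_n\uparrow g$ via monotone convergence. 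Your signed-measure argument is cleaner and in fact more general: it never uses the almost-sure continuity of $g$ (which the paper needs in order to invoke Proposition~\ref{prop:gf} for a monotone approximating sequence), and it uses the monotonicity of $f$ only to guarantee that $f$ is Borel, whereas the paper relies on it to push $f\circ\proj_{G(\mathcal{R})}h_n\uparrow f\circ\proj_{G(\mathcal{R})}g$ in the limit step. Your alternative approach---ordered simple approximation together with Abel summation and the KKT identities $P_k=0$ from \eqref{eq:lk}---is essentially the paper's argument in different dress: your summation by parts is exactly the paper's telescoping $\zeta$-decomposition, and both reduce the simple case to $\int_{E_k}(\proj_{G(\mathcal{R})}h-h)\,\d\nu=0$ on the level sets $E_k\in\mathcal{E}(h)$.
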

\begin{proof} 
Let $h=\sum_{i=1}^{n} \alpha_i\1_{A_i}$ and $\proj_{ G(\mathcal{R})} h = \sum_{i=1}^{n} \beta_i \1_{A_i}$. Suppose $\proj_{ G(\mathcal{R})} h$ takes $m$ distinct values $\tau_1 >  \dots > \tau_m$ where $m \le n$. For each $k=1,\dots,m$, set
\[
E_k \coloneqq \bigcup_{i=1}^k\{\proj_{ G(\mathcal{R})} h = \tau_i\}\in \mathcal{E}(h).
\]
Let $\zeta_m \coloneqq f(\tau_m )$ and  $\zeta_{m-j} \coloneqq f(\tau_{m-j}) - \zeta_{m-j+1}-\zeta_{m-j+2}-\dots  -\zeta_m $, $j=1, \dots,m-1$.
Then
\begin{align*}
\int  (\proj_{ G(\mathcal{R})} h) \cdot (f\circ \proj_{ G(\mathcal{R})} h) \,\d\nu 
&= \int  (\proj_{ G(\mathcal{R})} h) \cdot \Bigl(\sum_{i=1}^{n} f(\beta_i)\1_{A_i}\Bigr) \,\d\nu \\
&= \int  (\proj_{ G(\mathcal{R})} h) \cdot \sum_{i=1}^{m} \zeta_i \1_{E_i} \,\d\nu  \\
&=\sum_{i=1}^{m} \zeta_i \int_{E_i} \proj_{ G(\mathcal{R})} h \,\d\nu =	\sum_{i=1}^{m} \zeta_i \int_{E_i}  h \,\d\nu \\
&=\int  h \cdot \sum_{i=1}^{m} \zeta_i \1_{E_i} \,\d\nu =\int  h \cdot (f\circ \proj_{ G(\mathcal{R})} h)\,\d\nu.
\end{align*}
Now let $h_n$ be a nondecreasing sequence of nonnegative ordered simple function that converges to $g$ almost surely and in $L^2$-norm. By passing through a subsequence if necessary, $\proj_{ G(\mathcal{R})} h_n$ is a nonincreasing sequence that converges to $\proj_{ G(\mathcal{R})} g$ almost surely. Since $f$ is  nondecreasing,
\[
f\circ \proj_{ G(\mathcal{R})} h_n\uparrow f\circ \proj_{ G(\mathcal{R})} g
\]
almost surely.  By the monotone convergence theorem,
\begin{multline*}
\int  (\proj_{ G(\mathcal{R})} g) \cdot (f\circ \proj_{ G(\mathcal{R})} g) \,\d\nu =\lim_{n\to\infty} \int  (\proj_{ G(\mathcal{R})} h_n) \cdot (f\circ \proj_{ G(\mathcal{R})} h_n) \,\d\nu \\
	=\lim_{n\to\infty} \int   h_n \cdot (f\circ \proj_{ G(\mathcal{R})} h_n) \,\d\nu = \int   g \cdot (f\circ \proj_{ G(\mathcal{R})} g)\,\d\nu. \qedhere
\end{multline*}
\end{proof}

\begin{proposition}\label{prop:cvxineq} 
Let $f : \mathbb{R} \to \mathbb{R}$ be convex. If $g\in L^2_\p (\nu)$ is almost surely continuous, then
\[
\int  f \circ { \proj_{ G(\mathcal{R})} g} \,\d\nu \le \int  f\circ g \,\d\nu.
\]
\end{proposition}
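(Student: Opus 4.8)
The plan is to use the supporting-line inequality for the convex function $f$ and to recognize that the cross term it produces is exactly the quantity that Lemma~\ref{lem:integral-proj-eq} forces to vanish. Conceptually, this is a disguised conditional Jensen inequality: Corollary~\ref{cor:generate} shows that $\proj_{ G(\mathcal{R})} g$ integrates to the same value as $g$ over every set in $(\proj_{ G(\mathcal{R})} g)^{-1}(\mathcal{B}) = \sigma(\proj_{ G(\mathcal{R})} g)$, and since $\proj_{ G(\mathcal{R})} g$ is measurable with respect to that $\sigma$-algebra, it is precisely the $\nu$-conditional expectation of $g$ given $\sigma(\proj_{ G(\mathcal{R})} g)$. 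The tangent-line computation below is just an explicit proof of the ensuing conditional Jensen inequality, tailored to the tools already developed.

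Concretely, let $\varphi \coloneqq f'_{+}$ denote the right derivative of $f$, which is finite and nondecreasing on $(0,\infty)$ because $f$ is convex. The supporting-line inequality gives, pointwise, $f(g) \ge f(\proj_{ G(\mathcal{R})} g) + \varphi(\proj_{ G(\mathcal{R})} g)\,\bigl(g - \proj_{ G(\mathcal{R})} g\bigr)$. Integrating against $\nu$ yields
\[
\int f\circ g\,\d\nu \ge \int f\circ \proj_{ G(\mathcal{R})} g\,\d\nu + \int (\varphi\circ \proj_{ G(\mathcal{R})} g)\cdot\bigl(g - \proj_{ G(\mathcal{R})} g\bigr)\,\d\nu,
\]
so it suffices to show the last integral is zero. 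Splitting it as $\int (\varphi\circ \proj_{ G(\mathcal{R})} g)\cdot g\,\d\nu - \int (\varphi\circ \proj_{ G(\mathcal{R})} g)\cdot \proj_{ G(\mathcal{R})} g\,\d\nu$ and applying Lemma~\ref{lem:integral-proj-eq} with the nondecreasing function $\varphi$ in place of $f$ makes the two integrals equal. This is exactly the point at which the almost sure continuity of $g$ is used, since that is the hypothesis of Lemma~\ref{lem:integral-proj-eq}.

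The main obstacle is integrability, concentrated at the boundary value $0$. For divergences such as Kullback--Leibler ($f(t) = t\log t$), one has $\varphi(t) = \log t + 1 \to -\infty$ as $t\downarrow 0$, so $\varphi\circ \proj_{ G(\mathcal{R})} g$ is unbounded and Lemma~\ref{lem:integral-proj-eq} cannot be invoked verbatim on $\{\proj_{ G(\mathcal{R})} g = 0\}$. I would dispose of this by observing that Theorem~\ref{thm:key-lemma} gives $\int_{\{\proj_{ G(\mathcal{R})} g = 0\}} g\,\d\nu = \int_{\{\proj_{ G(\mathcal{R})} g = 0\}} \proj_{ G(\mathcal{R})} g\,\d\nu = 0$; since $g\ge 0$, this forces $g = \proj_{ G(\mathcal{R})} g = 0$ almost everywhere on $\{\proj_{ G(\mathcal{R})} g = 0\}$, so the factor $g - \proj_{ G(\mathcal{R})} g$ annihilates the singular contribution and the cross term genuinely lives on $\{\proj_{ G(\mathcal{R})} g > 0\}$, where $\varphi$ is finite. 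The remaining concern is the right tail: if $f$ grows quickly and $\int f\circ g\,\d\nu = +\infty$, the inequality is trivially true, so one may assume $\int f\circ g\,\d\nu < \infty$; combined with the affine lower bound enjoyed by every convex $f$, this controls $f\circ \proj_{ G(\mathcal{R})} g$ and legitimizes the application of the lemma, after a truncation of $\varphi$ and a monotone passage to the limit if a fully rigorous justification is desired.
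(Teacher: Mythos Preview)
Your proof is correct and follows exactly the same route as the paper's: the supporting-line inequality for $f$ at $\proj_{G(\mathcal{R})} g$, integrated against $\nu$, with the cross term killed by Lemma~\ref{lem:integral-proj-eq} applied to the nondecreasing (sub)derivative. If anything, your version is more careful than the paper's own proof, which simply writes $f'$ and does not discuss the boundary or integrability issues you address.
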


\begin{proof} 
By convexity,
\[
f\bigl(\proj_{ G(\mathcal{R})} g(x) \bigr)-f\bigl(g(x)\bigr)\le f'\bigl(\proj_{ G(\mathcal{R})} g(x) \bigr)\cdot \bigl(\proj_{ G(\mathcal{R})} g(x)-g(x)\bigr)
\]
almost surely. Integrating with respect to $\nu$ and applying Lemma~\ref{lem:integral-proj-eq} to $f'$, we get
\[
	\int f \circ { \proj_{ G(\mathcal{R})} g} - f\circ g\,\d\nu \le \int f'(\proj_{ G(\mathcal{R})} g)\cdot (\proj_{ G(\mathcal{R})} g- g)\,\d\nu =0. \qedhere
\]
\end{proof}
Theorem~\ref{thm:prop}\ref{it:bd} follows from Proposition~\ref{prop:cvxineq} since any $f \in \mathcal{F}$ is convex and so
\[
\D_f^\mathcal{R}(\nu \Vert \mu) = \int  f\Bigl(\proj_{ G(\mathcal{R})} \frac{\d\mu}{\d\nu}\Bigr)\,\d\nu \le \int  f\Bigl(\frac{\d\mu}{\d\nu}\Bigr)\,\d\nu =  \D_f(\nu \Vert \mu).	
\]
It remains to establish Theorem~\ref{thm:prop}\ref{it:id}, whose proof will take four intermediate results: Propositions~\ref{prop:Rfdiv-in-fdiv} and \ref{prop:fdiv-in-Rfdiv}, Corollaries~\ref{cor:Rfdiv-iff-fdiv} and \ref{cor:lowerbound}, each of independent interest. We begin by showing how we may convert $f$-divergence over rays to standard $f$-divergence.
\begin{proposition}\label{prop:Rfdiv-in-fdiv}  
For any  Borel probability measures $\mu$ and $\nu$, there exists a Borel probability measure $\zeta$ such that for any $f\in\mathcal{F}$,
\[
\D_f^\mathcal{R}(\mu \Vert \nu) = \D_f(\zeta \Vert \nu).
\]
\end{proposition}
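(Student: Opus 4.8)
The plan is to produce $\zeta$ explicitly rather than abstractly, using the very function that already appears in the definition of $\D_f^\mathcal{R}$. Write $\rho_\mathcal{R} = \proj_{ G(\mathcal{R})}(\d\mu/\d\nu)$ and define a set function by
\[
\zeta(A) \coloneqq \int_A \rho_\mathcal{R}\,\d\nu, \qquad A \in \mathcal{B}.
\]
Once $\zeta$ is known to be a Borel probability measure, the claimed identity is essentially tautological: by construction $\zeta \ll \nu$ with $\d\zeta/\d\nu = \rho_\mathcal{R}$, so for every $f \in \mathcal{F}$,
\[
\D_f(\zeta \Vert \nu) = \int f\Bigl(\frac{\d\zeta}{\d\nu}\Bigr)\,\d\nu = \int f(\rho_\mathcal{R})\,\d\nu = \int f\Bigl(\proj_{ G(\mathcal{R})}\frac{\d\mu}{\d\nu}\Bigr)\,\d\nu = \D_f^\mathcal{R}(\mu \Vert \nu).
\]
Thus the real content of the proposition is the verification that this $\zeta$ is a genuine probability measure, which reduces to two points.

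First I would check nonnegativity of $\rho_\mathcal{R}$, so that $\zeta$ is an honest (nonnegative) measure. Since $\d\mu/\d\nu \ge 0$ and the zero function is nonincreasing, it lies in $G(\mathcal{R})$ by Proposition~\ref{prop:FR}, whence $\proj_{ G(\mathcal{R})} 0 = 0$. Applying the monotonicity of the projection in Proposition~\ref{prop:proj-monotone} to $0 \le \d\mu/\d\nu$ then gives
\[
0 = \proj_{ G(\mathcal{R})} 0 \le \proj_{ G(\mathcal{R})}\frac{\d\mu}{\d\nu} = \rho_\mathcal{R}
\]
almost everywhere. Second, the total mass: taking $E = \mathbb{R}$ in Theorem~\ref{thm:key-lemma} (equivalently, the $E = \mathbb{R}$ instance of Corollary~\ref{cor:RNP}) yields
\[
\zeta(\mathbb{R}) = \int \rho_\mathcal{R}\,\d\nu = \int \frac{\d\mu}{\d\nu}\,\d\nu = \mu(\mathbb{R}) = 1.
\]
As $\rho_\mathcal{R}$ is nonnegative, Borel measurable (it admits a nonincreasing representative), and $\nu$-integrable, the set function $\zeta$ is countably additive, and these facts together make it a Borel probability measure absolutely continuous with respect to $\nu$.

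The main---and essentially the only---obstacle is the nonnegativity step, since a priori the metric projection of a nonnegative function onto the cone of nonincreasing functions could dip below zero; Proposition~\ref{prop:proj-monotone} combined with $\proj_{ G(\mathcal{R})} 0 = 0$ dispatches it cleanly. (One could alternatively read it off the quadratic program in Step~\ref{it:sim} of Theorem~\ref{thm:key-lemma}, whose feasible set already imposes $\beta_n \ge 0$.) Everything else is bookkeeping: the defining identity is immediate from the definitions of $\D_f^\mathcal{R}$ and $\D_f$, and the unit-mass property is exactly the $E = \mathbb{R}$ case of the key lemma.
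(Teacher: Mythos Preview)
Your proof is correct and follows essentially the same route as the paper: define $\zeta(B)=\int_B \proj_{G(\mathcal{R})}(\d\mu/\d\nu)\,\d\nu$, observe that $\d\zeta/\d\nu=\proj_{G(\mathcal{R})}(\d\mu/\d\nu)$, and read off the identity from the definitions. The paper simply cites Corollary~\ref{cor:RNP} for the fact that $\zeta$ is a probability measure, whereas you spell out the nonnegativity via Proposition~\ref{prop:proj-monotone} and the unit mass via the $E=\mathbb{R}$ case of Theorem~\ref{thm:key-lemma}; this extra detail is welcome, since the paper leaves the nonnegativity implicit.
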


\begin{proof}
We define  $\zeta$ by
\[
\zeta(B) \coloneqq \int_B \proj_{ G(\mathcal{R})}\frac{\d\mu}{\d\nu} \,\d\nu
\]
for any $B\in\mathcal{B}$.
By Corollary~\ref{cor:RNP}, $\zeta$ is a probability measure absolutely continuous with respect to $\nu$. Hence
\[
\frac{\d\zeta}{\d\nu}=\proj_{ G(\mathcal{R})} \frac{\d\mu}{\d\nu}
\]
and so
\[
\D_f^\mathcal{R}(\mu \Vert \nu) = \int  f\Bigl(\proj_{ G(\mathcal{R})}\frac{\d\mu}{\d\nu}\Bigr)\,\d\nu = \int  f\Bigl(\frac{\d\zeta}{\d\nu}\Bigr)\,\d\nu =\D_f(\zeta \Vert \nu). \qedhere
\] 
\end{proof}
Next we do the reverse: converting standard $f$-divergence to $f$-divergence over rays.
\begin{proposition}\label{prop:fdiv-in-Rfdiv} 
For any  Borel probability measures $\mu$ and $\nu$, there exist Borel probability measures $\eta, \tau$ such that for any $f\in\mathcal{F}$,
\[
\D_f(\mu \Vert \nu) = \D_f^\mathcal{R}(\eta \Vert \tau).
\]	
\end{proposition}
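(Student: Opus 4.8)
The plan is to exploit the fact that a standard $f$-divergence depends on $\mu$ and $\nu$ only through the law of the Radon--Nikodym derivative $g \coloneqq \d\mu/\d\nu$ under $\nu$. Writing $\lambda \coloneqq g_\# \nu$ for the pushforward measure on $[0,\infty)$, the change-of-variables formula gives
\[
\D_f(\mu \Vert \nu) = \int f(g)\,\d\nu = \int_{[0,\infty)} f(t)\,\d\lambda(t)
\]
for every $f \in \mathcal{F}$. Thus it suffices to manufacture a probability measure $\tau$ and a \emph{nonincreasing} function $\rho \in L^2_\p(\tau)$ whose law under $\tau$ is again $\lambda$: for then setting $\d\eta = \rho\,\d\tau$ will do the job, as explained below.

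The function $\rho$ will be the monotone (decreasing) rearrangement of $g$. I would fix once and for all an atomless Borel probability measure $\tau$ with full support and continuous, strictly increasing distribution function $F_\tau$ --- the standard Gaussian will do. Let $Q_\lambda$ denote the quantile function (generalized inverse of the distribution function) of $\lambda$, which is nondecreasing and finite on $(0,1)$ since $\lambda$ is a probability measure on $[0,\infty)$. Then I would define
\[
\rho(x) \coloneqq Q_\lambda\bigl(1 - F_\tau(x)\bigr), \qquad x \in \mathbb{R}.
\]
Because $x \mapsto 1 - F_\tau(x)$ is decreasing and $Q_\lambda$ is nondecreasing, $\rho$ is nonincreasing, hence lies in $G(\mathcal{R})$ by Proposition~\ref{prop:FR} (here one uses $g \in L^2(\nu)$, so that $\int \rho^2\,\d\tau = \int g^2\,\d\nu < \infty$; this integrability is in any case needed for the right-hand side $\D_f^\mathcal{R}(\eta \Vert \tau)$ to be defined at all). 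If $X \sim \tau$ then $U \coloneqq F_\tau(X)$ is uniform on $(0,1)$, so $1 - U$ is uniform as well, and the quantile transform yields $\rho(X) = Q_\lambda(1-U) \sim \lambda$; that is, $\rho_\# \tau = \lambda$.

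Finally I would define $\eta$ by $\eta(B) \coloneqq \int_B \rho\,\d\tau$ for $B \in \mathcal{B}$. Since $\int \rho\,\d\tau = \int_{[0,\infty)} t\,\d\lambda(t) = \int g\,\d\nu = \mu(\mathbb{R}) = 1$, the measure $\eta$ is a Borel probability measure with $\eta \ll \tau$ and $\d\eta/\d\tau = \rho$. As $\rho$ already belongs to $G(\mathcal{R})$, its metric projection onto $G(\mathcal{R})$ is itself, so
\[
\D_f^\mathcal{R}(\eta \Vert \tau) = \int f\bigl(\proj_{G(\mathcal{R})}\rho\bigr)\,\d\tau = \int f(\rho)\,\d\tau = \int_{[0,\infty)} f(t)\,\d\lambda(t) = \D_f(\mu \Vert \nu),
\]
which is the claim. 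I expect the main obstacle to be the rearrangement step itself: one must orient the quantile transform (via $1 - F_\tau$ rather than $F_\tau$) so that $\rho$ comes out nonincreasing, as required for membership in $G(\mathcal{R})$, and then verify carefully that the pushforward $\rho_\# \tau$ recovers precisely $\lambda$. The remaining checks --- atomlessness of $\tau$, finiteness of $Q_\lambda$ on $(0,1)$, and the $L^2$ integrability placing $\rho$ in $G(\mathcal{R})$ --- are routine once this orientation is settled.
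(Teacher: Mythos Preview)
Your proposal is correct and follows essentially the same approach as the paper: both arguments realize $\D_f(\mu\Vert\nu)$ via a decreasing rearrangement of $\d\mu/\d\nu$, then transplant it onto $\mathbb{R}$ so that the resulting Radon--Nikodym derivative is already nonincreasing and hence fixed by $\proj_{G(\mathcal{R})}$. The paper phrases the construction via the decumulative distribution and an abstract measure-preserving map $T:[0,1]\to\mathbb{R}$, whereas you make the same thing concrete through the quantile function $Q_\lambda$ and a Gaussian reference measure; these are interchangeable packagings of the same monotone-rearrangement idea.
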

\begin{proof} 
The distribution function\footnote{In this article, we use the \emph{decumulative} distribution function instead of the more common cumulative distribution function. They are equivalent --- we could have used the latter but it introduces more notational clutter. Another advantage is consistency with our discussion of Choquet integrals in Section~\ref{sec:cho}.} $m : [0,\infty) \to [0,1]$ of $\d\mu / \d\nu$ is given by
\[
m(\alpha) \coloneqq \nu \Bigl(\Bigl\{ x \in \Omega : \frac{\d\mu}{\d\nu}(x) > \alpha \Bigr\} \Bigr)
\]
for any $\alpha \in [0,\infty)$. Since $m$ may not be injective, we define its generalized inverse $\rho : [0,1] \to [0,\infty)$,
\[
\rho (u) \coloneqq \inf \{ \alpha \in [0,\infty) : m(\alpha)\leq u\}
\]
for  any $u\in[0,1]$.
Thus $\rho$ may be viewed a decreasing rearrangement of $\d\mu / \d\nu$ on $[0,1]$ satisfying
\[
\nu \Bigl(\Bigl\{x \in \Omega : \frac{\d\mu}{\d\nu}(x)>\alpha \Bigr\}\Bigr) = \lambda\bigl(\{u\in [0,1] :\rho(u)>\alpha\}\bigr),
\]
with $\lambda$ the Lebesgue measure on $[0,1]$ \cite[p.~199]{folland1999}. Let $T: [0,1] \to \mathbb{R}$ be a strictly increasing measure-preserving transformation. Define the probability measures
\[
\tau(A) \coloneqq \lambda(T^{-1}(A)),\qquad
\eta(A) \coloneqq \int_{T^{-1}(A)} \rho \,\d\lambda
\]
for any $A \in \mathcal{B}$. Since $\rho$ is nonincreasing and $T^{-1}$ is increasing, the Radon--Nikodym derivative
\[
\frac{\d\eta}{\d\tau} = \rho\circ T^{-1}
\]
is also nonincreasing on $\mathbb{R}$, so $\d\eta/\d\tau = \proj_{G(\mathcal{R})} \d\eta/\d\tau$. Hence, for any $f\in\mathcal{F}$,
\[
\D_f(\mu \Vert \nu) = \int f\Bigl(\frac{\d\mu}{\d\nu}\Bigr)\,\d\nu =\int_{[0,1]} f(\rho)\,\d\lambda 
= \int f(\rho\circ T^{-1})\,\d\tau = \int  f\Bigl(\proj_{ G(\mathcal{R})}\frac{\d\eta}{\d\tau}\Bigr)\,\d\tau = \D_f^{\mathcal{R}} (\eta\Vert\tau)
\]
as required.
\end{proof}
Collectively, Propositions~\ref{prop:Rfdiv-in-fdiv} and \ref{prop:fdiv-in-Rfdiv} lead to the following conclusion: An inequality relation between two divergences hold if and only if that same relation hold for their counterparts over rays.
\begin{corollary}[Preservation of inequality relations]\label{cor:Rfdiv-iff-fdiv} 
Let $f, g\in\mathcal{F}$ and $\psi, \varphi : [0,\infty) \to \mathbb{R}$. Then
\[
\psi(\D_f(\mu \Vert \nu))\le \varphi(\D_g(\mu \Vert \nu))
\]
holds for all Borel probability measures $\mu$ and $\nu$ if and only if
\[
\psi(\D^\mathcal{R}_f(\mu \Vert \nu))\le \varphi(\D^\mathcal{R}_g(\mu \Vert \nu))
\]
holds for all Borel probability measures $\mu$ and $\nu$.
\end{corollary}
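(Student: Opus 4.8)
The plan is to treat this as a formal consequence of Propositions~\ref{prop:Rfdiv-in-fdiv} and \ref{prop:fdiv-in-Rfdiv}, applying each to the pair $f,g$ simultaneously. The two implications are perfectly symmetric, so I would prove them by the same substitution mechanism: whenever two divergence values are literally \emph{equal} to a pair of companion values, an inequality between arbitrary functions $\psi,\varphi$ of the first pair transfers verbatim to the second pair, regardless of any structural properties of $\psi$ and $\varphi$ (we need only that both divergences are nonnegative, so that they lie in the common domain $[0,\infty)$, which holds by Theorem~\ref{thm:prop}\ref{it:pos}).

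For the forward direction, assume $\psi(\D_f(\mu\Vert\nu))\le \varphi(\D_g(\mu\Vert\nu))$ for all Borel probability measures. Fix arbitrary $\mu,\nu$. Proposition~\ref{prop:Rfdiv-in-fdiv} supplies a single Borel probability measure $\zeta$ with
\[
\D_f^\mathcal{R}(\mu\Vert\nu)=\D_f(\zeta\Vert\nu),\qquad \D_g^\mathcal{R}(\mu\Vert\nu)=\D_g(\zeta\Vert\nu).
\]
Applying the hypothesis to the pair $(\zeta,\nu)$ then gives
\[
\psi(\D_f^\mathcal{R}(\mu\Vert\nu))=\psi(\D_f(\zeta\Vert\nu))\le \varphi(\D_g(\zeta\Vert\nu))=\varphi(\D_g^\mathcal{R}(\mu\Vert\nu)),
\]
which is the desired conclusion. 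For the reverse direction, assume the inequality over $\mathcal{R}$ for all measures, fix $\mu,\nu$, and invoke Proposition~\ref{prop:fdiv-in-Rfdiv} to obtain Borel probability measures $\eta,\tau$ with $\D_f(\mu\Vert\nu)=\D_f^\mathcal{R}(\eta\Vert\tau)$ and $\D_g(\mu\Vert\nu)=\D_g^\mathcal{R}(\eta\Vert\tau)$; applying the hypothesis to $(\eta,\tau)$ yields $\psi(\D_f(\mu\Vert\nu))\le\varphi(\D_g(\mu\Vert\nu))$ in the same fashion.

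The one point that genuinely must be checked — and the only place where the argument could fail if misread — is that the auxiliary measures $\zeta$ (respectively $\eta,\tau$) are the \emph{same} for $f$ and for $g$. This is precisely the strength of the two propositions as stated: the measure $\zeta$ in Proposition~\ref{prop:Rfdiv-in-fdiv} is built from $\proj_{G(\mathcal{R})}(\d\mu/\d\nu)$ and the measures $\eta,\tau$ in Proposition~\ref{prop:fdiv-in-Rfdiv} from the decreasing rearrangement of $\d\mu/\d\nu$, both entirely independent of the convex function, so a single choice serves all of $\mathcal{F}$ at once. Consequently there is no real obstacle beyond recording this uniformity; once it is noted, the corollary follows by the two displayed substitutions.
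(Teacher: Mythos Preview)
Your proof is correct and follows exactly the same approach as the paper's own proof: both directions are obtained by invoking Propositions~\ref{prop:Rfdiv-in-fdiv} and \ref{prop:fdiv-in-Rfdiv} and substituting into the assumed inequality. Your explicit observation that the auxiliary measures $\zeta$ and $(\eta,\tau)$ do not depend on the choice of $f\in\mathcal{F}$ is the essential point, and the paper relies on it in the same way (the ``for any $f\in\mathcal{F}$'' clauses in those propositions).
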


\begin{proof} 
Suppose $\psi(\D_f(\mu \Vert \nu))\le \varphi(\D_g(\mu \Vert \nu))$ for all $\mu$ and $\nu$. By Proposition~\ref{prop:Rfdiv-in-fdiv}, there exists $\zeta$ such that 
\[
\psi(\D_f^\mathcal{R}(\mu \Vert \nu)) = \psi(\D_f(\zeta \Vert \nu))\le \varphi(\D_g(\zeta \Vert \nu))=\varphi(\D_g^\mathcal{R}(\mu \Vert \nu)).
\]
Conversely, suppose $\psi(\D_f^\mathcal{R}(\mu \Vert \nu))\le \varphi(\D^\mathcal{R}_g(\mu \Vert \nu))$ for all $\mu$ and $\nu$. By Proposition~\ref{prop:fdiv-in-Rfdiv}, there exist $\eta$ and $\tau$ such that  
\[
\psi(\D_f(\mu \Vert \nu))= \psi(\D_f^\mathcal{R}(\eta \Vert \tau))\le \varphi(\D_g^\mathcal{R}(\eta \Vert \tau)) = \varphi(\D_g(\mu \Vert \nu)).\qedhere
\]
\end{proof}

As most readers familiar with $f$-divergences would suspect, the total variation distance plays a somewhat special role among all $f$-divergences. One striking property \cite{brocker2009}  is that it serves as a kind of universal lower bound for all $f$-divergence, in the sense that
\[
\bar{f}\bigl(\D_{\tv}(\mu \Vert \nu) \bigr) \leq \D_f(\mu \Vert \nu),
\]
where $\bar{f}(x) \coloneqq f(1+x) + f(1-x)$ is the symmetrized function of $f$. Note that $\D_{\tv}(\mu \Vert \nu)$ takes value in $[0,1]$, so the $f(1-x)$ term is always evaluated within the domain of $f \in \mathcal{F}$. A consequence of our definition of $\mathcal{F}$ in \eqref{eq:mathcalF} is that $\bar{f}$ is increasing and $\bar{f}(x)>0$ when $x>0$. So by Corollary~\ref{cor:Rfdiv-iff-fdiv}, the same result holds verbatim for divergences over rays.

\begin{corollary}[Total variation over rays as universal lower bound]\label{cor:lowerbound} 
Let $f \in \mathcal{F}$ and set $\bar{f}(x) \coloneqq f(1+x) + f(1-x)$ for $x \in [0,1]$. Then
\[
\bar{f}\bigl(\D^\mathcal{R}_{\tv}(\mu \Vert \nu)\bigr) \leq \D^\mathcal{R}_f(\mu \Vert \nu).
\]
\end{corollary}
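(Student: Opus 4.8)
The plan is to derive this directly from the classical universal lower bound of \cite{brocker2009} by invoking the transfer principle of Corollary~\ref{cor:Rfdiv-iff-fdiv}; once the right substitution is identified, no further analysis is required. Recall that the standard result asserts
\[
\bar{f}\bigl(\D_{\tv}(\mu \Vert \nu)\bigr) \le \D_f(\mu \Vert \nu)
\]
for all Borel probability measures $\mu, \nu$, where the total variation distance is the $f$-divergence generated by $f_{\tv}(t) = \lvert t-1 \rvert/2$. My strategy is to read this inequality as one of the relations covered by Corollary~\ref{cor:Rfdiv-iff-fdiv}.

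Concretely, I would apply Corollary~\ref{cor:Rfdiv-iff-fdiv} with its two generators taken to be $f_{\tv}$ and the given $f$, and with the outer functions $\psi = \bar{f}$ and $\varphi = \mathrm{id}$. With these choices the hypothesis of the corollary reads $\bar{f}(\D_{\tv}(\mu \Vert \nu)) \le \D_f(\mu \Vert \nu)$, which is precisely the Brocker bound and holds for all $\mu, \nu$. The corollary then produces the same relation for the divergences over rays, namely
\[
\bar{f}\bigl(\D^\mathcal{R}_{\tv}(\mu \Vert \nu)\bigr) \le \D^\mathcal{R}_f(\mu \Vert \nu),
\]
which is exactly the assertion.

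Two bookkeeping points make the instantiation legitimate. First, one checks that $f_{\tv}(t) = \lvert t-1\rvert/2$ genuinely belongs to $\mathcal{F}$: it is convex, vanishes at $1$, and is strictly convex at $1$ in the sense of \eqref{eq:mathcalF}, since for distinct $x, y \in [0,\infty)$ with $tx + (1-t)y = 1$ neither point can equal $1$, whence $t\lvert x-1\rvert/2 + (1-t)\lvert y-1\rvert/2 > 0$. Second, $\bar{f}(x) = f(1+x) + f(1-x)$ is a priori defined only for $x \in [0,1]$, whereas Corollary~\ref{cor:Rfdiv-iff-fdiv} expects $\psi$ to be defined on all of $[0,\infty)$. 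This is harmless: both $\D_{\tv}$ and $\D^\mathcal{R}_{\tv}$ take values in $[0,1]$---the latter because $\D^\mathcal{R}_{\tv}(\mu \Vert \nu) = \sup_{A \in \mathcal{R}}(\mu(A) - \nu(A)) \in [0,1]$ by Theorem~\ref{thm:main-theorem}---so one may extend $\bar{f}$ to $[0,\infty)$ in any manner without altering either side of the inequality.

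There is no genuine obstacle in this particular proof; all of the difficulty has been front-loaded into Propositions~\ref{prop:Rfdiv-in-fdiv} and \ref{prop:fdiv-in-Rfdiv}, which furnish the two-way correspondence between divergences and their over-rays versions that powers Corollary~\ref{cor:Rfdiv-iff-fdiv}. The only thing to guard against is a direction error when matching the quadruple $(f_{\tv}, f, \psi, \varphi)$ to the corollary's template, i.e. ensuring that the TV generator sits in the role of the left-hand divergence and the given $f$ in the role of the right-hand one.
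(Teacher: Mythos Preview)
Your proposal is correct and follows exactly the paper's own argument: the paper also derives Corollary~\ref{cor:lowerbound} by citing the classical bound $\bar{f}(\D_{\tv}(\mu\Vert\nu))\le \D_f(\mu\Vert\nu)$ from \cite{brocker2009} and then invoking Corollary~\ref{cor:Rfdiv-iff-fdiv}. Your additional bookkeeping---verifying $f_{\tv}\in\mathcal{F}$ and noting that the domain issue for $\bar f$ is moot because both $\D_{\tv}$ and $\D^\mathcal{R}_{\tv}$ land in $[0,1]$---is sound and in fact more explicit than what the paper writes.
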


Theorem~\ref{thm:prop}\ref{it:id} now follows directly from Corollary~\ref{cor:lowerbound}: If $\D^\mathcal{R}_f(\mu \Vert \nu) = 0$, then $\bar{f}\bigl(\D^\mathcal{R}_{\tv}(\mu \Vert \nu)\bigr) = 0$. So $\D^\mathcal{R}_{\tv}(\mu \Vert \nu) = 0$ as $\bar{f}$ is strictly increasing and $\bar{f}(0) = 0$. Since we have also assumed that $\D^\mathcal{R}_f(\nu \Vert \mu) = 0$, we get $\D^\mathcal{R}_{\tv}(\nu \Vert \mu) = 0$, and thus $\smash[t]{\widehat{\D}}_{\tv}^\mathcal{R}(\nu \Vert \mu) = 0$. By \eqref{eq:dist}, we deduce that $\mu = \nu$ over $\mathcal{R}$. But $\mathcal{R}$ is a $\pi$-system that generates $\mathcal{B}$. So Lemma~\ref{lem:pi-sys} assures that we have $\mu = \nu$.

We end this section with another demonstration of the utility of Corollary~\ref{cor:Rfdiv-iff-fdiv}: It allows us to extend essentially all known relations between divergences to their counterparts over rays.
\begin{theorem}[Relations between $f$-divergences over rays]\label{thm:rel}
The following inequalities hold verbatim when each $\D_f$ is replaced by $\D_f^\mathcal{R}$.
\begin{enumerate}[\upshape (a)]
\item\label{tvh} Total variation and Hellinger:
\begin{gather*}
\frac{1}{2} \D_{\hl}^2(\mu \Vert \nu) \le \D_{\tv}(\mu \Vert \nu) \le \D_{\hl}(\mu \Vert \nu) \sqrt{1-\frac{\smash[t]{\D_{\hl}^2}(\mu \Vert \nu)}{4}} \le 1, \\
\D_{\tv}(\mu \Vert \nu) \le \sqrt{-2 \log \Bigl(1-\frac{\smash[t]{\D_{\hl}^2}(\mu \Vert \nu)}{2}\Bigr)}.
\end{gather*}

\item\label{kltv} Kullback--Leibler and total variation:
\begin{gather*}
\D_{\tv}^2(\mu \Vert \nu) \le \frac{1}{2}\D_{\kl}(\mu \Vert \nu), \\
\log\Bigl( \frac{1+\D_{\tv}(\mu \Vert \nu)}{1-\D_{\tv}(\mu \Vert \nu)} \Bigr) -\frac{2 \D_{\tv}(\mu \Vert \nu)}{1+\D_{\tv}(\mu \Vert \nu)}  \le \D_{\kl}(\mu \Vert \nu), \\
\D_{\kl}(\mu \Vert \nu) \le \log \Bigl(1+\frac{1}{2\nu_{\min }} \D_{\tv}(\mu \Vert \nu)^2\Bigr) \le \frac{1}{2\nu_{\min}} \D_{\tv}(\mu \Vert \nu)^2,
\end{gather*}
where $\nu_{\min} \coloneqq \min _x \nu(x)$.
These are known respectively as Pinsker, strong Pinsker, and reverse Pinsker inequality.

\item\label{tvchi} Total variation and $\chi^2$:
\begin{gather*}
\D_{\tv}^2(\mu \Vert \nu) \le \frac{1}{4}\D_{\chi^2}(\mu, \nu), \qquad \D_{\tv}(\mu \Vert \nu) \le \max \biggl\{\frac{1}{2}, \frac{\D_{\chi^2}(\mu \Vert \nu)}{1+\D_{\chi^2}(\mu, \nu)}\biggr\}, \\
4 \D_{\tv}^2(\mu \Vert \nu)\le  \varphi (\D_{\tv}(\mu \Vert \nu)) \le \D_{\chi^2}(\mu \Vert \nu),
\end{gather*}
where
\[
\varphi (t)= \begin{cases}4 t^2 & t \le 1/2, \\ t/(1-t) & t \ge 1/2.
\end{cases}
\]

\item\label{klh} Kullback--Leibler and Hellinger:
\begin{gather*}
\D_{\hl}^2(\mu \Vert \nu)\le 2 \log \frac{2}{2-\D_{\hl}^2(\mu \Vert \nu)}  \le \D_{\kl}(\mu \Vert \nu), \\
\D_{\kl}(\mu \Vert \nu) \le \frac{\log \bigl(1/\nu_{\min}-1\bigr)}{1-2\nu_{\min}}\bigl(1 - ( 1-\D_{\hl}^2(\mu \Vert \nu))^2\bigr).
\end{gather*}

\item\label{klchi} Kullback--Leibler and $\chi^2$:
\[
\D_{\kl}(\mu \Vert \nu)\le \log \bigl(1+\D_{\chi^2}(\mu \Vert \nu)\bigr) \le  \D_{\chi^2}(\mu \Vert \nu).
\]
\item\label{lch}  Le~Cam and Hellinger:
\[
\frac{1}{2} \D_{\hl}^2(\mu \Vert \nu) \le \D_{\lc}(\mu \Vert \nu) \le \D_{\hl}^2(\mu \Vert \nu).
\]
\item\label{lcjs}  Le~Cam and Jensen--Shannon:
\[
\D_{\lc}(\mu \Vert \nu)  \le \D_{\js}(\mu \Vert \nu) \le 2 \log 2\cdot\D_{\lc}(\mu \Vert \nu).
\]
\end{enumerate}
\end{theorem}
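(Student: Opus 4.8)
The plan is to deduce every inequality from Corollary~\ref{cor:Rfdiv-iff-fdiv}, which already does the heavy lifting: it asserts that a relation $\psi(\D_f(\mu\Vert\nu))\le\varphi(\D_g(\mu\Vert\nu))$ holding for all $\mu,\nu$ transfers verbatim to $\psi(\D_f^{\mathcal{R}}(\mu\Vert\nu))\le\varphi(\D_g^{\mathcal{R}}(\mu\Vert\nu))$. Since each inequality in the list is a \emph{known} relation between standard $f$-divergences, and since the generators $t\mapsto\lvert t-1\rvert/2$, $(\sqrt t-1)^2$, $t\log t$, $(t-1)^2$, together with the Le~Cam and Jensen--Shannon generators, all lie in $\mathcal{F}$, it suffices to recast each inequality in the template $\psi(\D_f)\le\varphi(\D_g)$ and invoke the corollary. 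For the theorem we need only the forward implication, namely the one furnished by Proposition~\ref{prop:Rfdiv-in-fdiv}.

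Concretely, I would dispatch the list mechanically. A chain such as $\tfrac12\D_{\hl}^2\le\D_{\tv}\le\D_{\hl}\sqrt{1-\D_{\hl}^2/4}\le 1$ in \ref{tvh} splits into its consecutive links, each relating only two of the divergences: the first link uses $f=\hl$, $g=\tv$, $\psi(x)=x^2/2$, $\varphi(x)=x$; the second uses $f=\tv$, $g=\hl$, $\psi(x)=x$, $\varphi(x)=x\sqrt{1-x^2/4}$; and $\D_{\tv}\le 1$ is immediate from $\D_{\tv}^{\mathcal{R}}(\mu\Vert\nu)=\sup_{A\in\mathcal{R}}(\mu(A)-\nu(A))\le 1$. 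A one-divergence inequality such as $4\D_{\tv}^2\le\varphi(\D_{\tv})$ in \ref{tvchi} is covered by taking $f=g=\tv$ in the corollary; Pinsker in \ref{kltv} uses $f=\tv$, $g=\kl$, $\psi(x)=x^2$, $\varphi(x)=x/2$; and similarly for the remaining links of \ref{klh}, \ref{klchi}, \ref{lch}, and \ref{lcjs}. The only routine bookkeeping is to check that each chosen $\psi,\varphi$ is defined on the range actually attained by the divergence to which it is applied (e.g.\ $\D_{\hl}^2\in[0,2]$, so $1-\D_{\hl}^2/4\ge 1/2$), and that the handful of singular endpoints---such as $x=1$ in the strong Pinsker function $\log\frac{1+x}{1-x}-\frac{2x}{1+x}$---cause no trouble, since there the companion divergence is infinite and the bound is vacuous.

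The one genuine obstacle is the appearance of $\nu_{\min}=\min_x\nu(x)$ in the reverse Pinsker inequality of \ref{kltv} and in the upper bound of \ref{klh}: here the constants inside $\psi,\varphi$ depend on $\nu$ itself, whereas Corollary~\ref{cor:Rfdiv-iff-fdiv} is stated for fixed $\psi,\varphi$. I would resolve this by returning to Proposition~\ref{prop:Rfdiv-in-fdiv}, the engine behind the forward implication. The measure $\zeta$ it produces, with $\D_f^{\mathcal{R}}(\mu\Vert\nu)=\D_f(\zeta\Vert\nu)$, is built from $\proj_{G(\mathcal{R})}(\d\mu/\d\nu)$ and is therefore \emph{independent of $f$}, so a single $\zeta$ simultaneously realizes every divergence over $\mathcal{R}$ as a standard divergence against the \emph{same} second argument $\nu$. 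Consequently $\nu_{\min}$ is unchanged, and applying the known reverse Pinsker (respectively reverse Kullback--Leibler--Hellinger) inequality to the pair $(\zeta,\nu)$ yields the desired bound for $(\mu,\nu)$ over $\mathcal{R}$ with the identical constant. In short, Corollary~\ref{cor:Rfdiv-iff-fdiv} continues to hold verbatim when $\psi$ and $\varphi$ are allowed to depend on $\nu$, precisely because $\nu$ is preserved by the construction, and this is all that the $\nu_{\min}$-laden inequalities require.
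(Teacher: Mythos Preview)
Your proposal is correct and follows essentially the same approach as the paper: cite the known inequalities for standard $f$-divergences and transfer them to $\D_f^{\mathcal{R}}$ via Corollary~\ref{cor:Rfdiv-iff-fdiv}. Your handling of the $\nu_{\min}$-dependent bounds---returning to Proposition~\ref{prop:Rfdiv-in-fdiv} and noting that the auxiliary measure $\zeta$ is independent of $f$ while the second argument $\nu$ is preserved---is in fact more careful than the paper's own proof, which simply invokes the corollary without comment on this subtlety.
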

\begin{proof}
These are well-known inequalities between the respective $f$-divergences: the two inequalities in \ref{tvh} may be found in \cite[p.~124]{Polyanskiy_Wu_2024} and \cite[Theorem~12]{Gilardoni2010} respectively; the three inequalities in \ref{kltv} in \cite[p.~88]{Tsybakov2009}, \cite{Vajda1970}, and  \cite[Theorem~28]{Sason2016} respectively; those in \ref{tvchi}, \ref{klh}, and \ref{klchi} may be found in \cite[p.~133]{Polyanskiy_Wu_2024}; \ref{lch} in \cite[p.~48]{LeCam1986}; \ref{lcjs} in \cite[Theorem~3.2]{Topsoe2000}. It follows from Corollary~\ref{cor:Rfdiv-iff-fdiv} that these relations also hold for the corresponding $f$-divergences over $\mathcal{R}$.
\end{proof}
Ultimately, a definition is justified by the richness of results that can be proved with it. We view the results in this section as evidence that the $f$-divergence over rays as defined in Definition~\ref{def:Rfdiv} is the ``right'' one.  In this regard, the next section will supply another major piece of evidence.

\section{Glivenko--Cantelli theorem for $f$-divergences over rays}\label{sec:GC}

Put in our context, the original Glivenko--Cantelli theorem in Theorem~\ref{thm:GC} states that empirical distributions converge to the target distribution almost surely in total variation distance over $\mathcal{R}$, as defined in Definition~\ref{def:Rfdiv}. Here we generalize this result to all $f$-divergences over $\mathcal{R}$.

As in the case of total variation distance, this convergence does not always hold if we replace $\mathcal{R}$ with $\mathcal{B}$ or some other subsets of $\mathcal{B}$. Examples~\ref{ex:tv} and \ref{ex:gc}, where $\limsup_{n\to\infty} \D_{\tv}(\nu_n \Vert \nu)>0$, can be readily extended to any $f\in\mathcal{F}$ by way of Corollary~\ref{cor:lowerbound}:
\[
0<\bar{f}\bigl(\limsup_{n\to\infty} \D_{\tv}(\nu_n \Vert \nu)\bigr) = \limsup_{n\to\infty} \bar{f}\bigl(\D_{\tv}(\nu_n \Vert \nu)\bigr) \le \D_f(\nu_n \Vert \nu),
\]
i.e., the $f$-divergence between the empirical distribution and the target does not in general converge to $0$ almost surely. Our main result is that it holds for $f$-divergence over rays: For any $f\in\mathcal{F}$, Theorems~\ref{thm:GC1} and \ref{thm:GC2} collectively show that almost surely
\begin{equation}\label{eq:main}
\lim_{n\to\infty} \D_f^\mathcal{R}(\nu_n \Vert \nu) =0 = \lim_{n\to\infty} \D_f^\mathcal{R}(\nu \Vert \nu_n).
\end{equation}
Note that an $f$-divergence is generally not symmetric in its arguments so we need to establish both equalities.

We will examine more carefully why a sequence of empirical measures can converge on $\mathcal{R}$ but fails to converge on $\mathcal{B}$. It turns out that a necessary condition is that its corresponding sequence of Radon--Nikodym derivatives does not converge to $1$, but the sequence of their projections onto $G(\mathcal{R})$ does.
\begin{proposition}\label{prop:RN}
Let $\nu$ be a Borel probability measure and $\nu_n$ be the corresponding empirical measure, $n \in \mathbb{N}$.
Suppose the Radon--Nikodym derivative $\d\nu_n / \d\nu$ exists for all $n\in\mathbb{N}$ and are uniformly bounded. If
\[
\sup_{A\in\mathcal{B}}\bigl(\nu_n(A)-\nu(A)\bigr)\not\to 0 \quad\text{and}\quad \sup_{A\in\mathcal{R}}\bigl(\nu_n(A)-\nu(A)\bigr)\to 0,
\]
then
\[
 \frac{\d\nu_n}{\d\nu}\not\to 1 \quad\text{and}\quad \proj_{ G(\mathcal{R})}  \frac{\d\nu_n}{\d\nu} \to 1,
\]
where convergence are all in an almost sure sense.
\end{proposition}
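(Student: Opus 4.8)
The plan is to treat the two conclusions separately, since the first uses only the hypothesis on $\mathcal{B}$ together with the uniform boundedness, while the second uses only the hypothesis on $\mathcal{R}$ together with Theorem~\ref{thm:main-theorem}. First I would dispatch the assertion that $\d\nu_n/\d\nu$ does not converge to $1$ almost surely by contraposition. Recall the total variation identity $\int \frac{1}{2}\lvert \d\nu_n/\d\nu - 1\rvert\,\d\nu = \sup_{A\in\mathcal{B}}\bigl(\nu_n(A)-\nu(A)\bigr)$. If $\d\nu_n/\d\nu \to 1$ held $\nu$-almost surely, then the uniform boundedness hypothesis would let me invoke dominated convergence to conclude $\int \lvert \d\nu_n/\d\nu - 1\rvert\,\d\nu \to 0$, i.e.\ $\sup_{A\in\mathcal{B}}\bigl(\nu_n(A)-\nu(A)\bigr)\to 0$, contradicting the standing hypothesis. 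Hence $\d\nu_n/\d\nu \not\to 1$ almost surely.

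For the second conclusion, write $\rho_n \coloneqq \proj_{G(\mathcal{R})}(\d\nu_n/\d\nu)$. Theorem~\ref{thm:main-theorem} applied to $\mu=\nu_n$ gives $\int \frac{1}{2}\lvert \rho_n - 1\rvert\,\d\nu = \sup_{A\in\mathcal{R}}\bigl(\nu_n(A)-\nu(A)\bigr)\to 0$, so $\rho_n\to 1$ in $L^1(\nu)$. By Markov's inequality this yields, for every $\varepsilon>0$, that $\nu(\{\rho_n>1+\varepsilon\})\to 0$ and $\nu(\{\rho_n<1-\varepsilon\})\to 0$; that is, $\rho_n\to 1$ in $\nu$-measure. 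The remaining, and genuinely delicate, task is to upgrade this to $\nu$-almost sure convergence, which fails for in-measure convergence in general.

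Here I would exploit the fact that each $\rho_n\in G(\mathcal{R})$ is nonincreasing (Proposition~\ref{prop:FR}), so each superlevel set $A_n^{\p}\coloneqq \{\rho_n>1+\varepsilon\}$ is a left ray. The key observation is that if $x$ lies in $A_n^{\p}$ for infinitely many $n$, then, $A_n^{\p}$ being a left ray containing $x$, we have $A_n^{\p}\supseteq (-\infty,x]$ for those $n$, whence $\nu(A_n^{\p})\ge \nu((-\infty,x])$ infinitely often; combined with $\nu(A_n^{\p})\to 0$ this forces $\nu((-\infty,x])=0$. Thus $\limsup_{n\to\infty} A_n^{\p}$ is contained in the left ray $\{x:\nu((-\infty,x])=0\}$, which is $\nu$-null. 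A symmetric argument with the right rays $\{\rho_n<1-\varepsilon\}$ shows $\limsup_{n\to\infty}\{\rho_n<1-\varepsilon\}$ is $\nu$-null as well. Taking $\varepsilon=1/k$ and a countable union over $k\in\mathbb{N}$ then gives $\nu(\{x:\rho_n(x)\not\to 1\})=0$, i.e.\ $\rho_n\to 1$ almost surely. I expect this monotonicity-driven upgrade to be the main obstacle: the naive subsequence extraction recovers only convergence in measure, and one must use the ray structure---and, implicitly, the fact that an atom carried by infinitely many $A_n^{\p}$ would keep $\nu(A_n^{\p})$ bounded away from $0$---to rule out a positive-measure set of non-convergence.
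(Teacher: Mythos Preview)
Your first conclusion matches the paper exactly: contraposition via the total variation identity and dominated convergence under the uniform bound.

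For the second conclusion, both you and the paper begin by converting the hypothesis $\sup_{A\in\mathcal{R}}(\nu_n(A)-\nu(A))\to 0$ into $L^1$ convergence $\int\lvert\rho_n-1\rvert\,\d\nu\to 0$ via Theorem~\ref{thm:main-theorem}, but then diverge. The paper's route is to note, via Proposition~\ref{prop:proj-monotone}, that $\rho_n\le c$ uniformly and then invoke bounded convergence to pass the limit inside the integral---a step that, as written, tacitly presupposes the existence of the pointwise limit it is meant to establish. Your route instead upgrades from convergence in measure to almost-sure convergence by exploiting the order structure directly: since each $\{\rho_n>1+\varepsilon\}$ is a left ray and each $\{\rho_n<1-\varepsilon\}$ a right ray, a point lying in one of these sets infinitely often forces the corresponding one-sided tail of $\nu$ to vanish, confining $\limsup_n\{\lvert\rho_n-1\rvert>\varepsilon\}$ to a $\nu$-null set. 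This is a genuinely different and more self-contained argument; it does not use the uniform bound on $\rho_n$ at all, and it makes the almost-sure upgrade explicit rather than leaning on a dominated-convergence step whose hypothesis has not yet been verified. Your observation that monotonicity is what rescues the passage from in-measure to a.s.\ convergence is exactly the right diagnosis.
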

\begin{proof}
Suppose $\lim_{n\to\infty} \d\nu_n / \d\nu = 1$. Then
\begin{align*}
	0 = \int  \frac{1}{2}  \biggl\lvert \lim_{n\to\infty} \frac{\d\nu_n}{\d\nu} - 1\biggr\rvert \,\d\nu 
\ge \limsup_{n\to\infty} \int  \frac{1}{2} \biggl\lvert \frac{\d\nu_n}{\d\nu} - 1\biggr\rvert \,\d\nu = \limsup_{n\to\infty} \D_{\tv}(\nu_n \Vert \nu) \ge 0.
\end{align*}
So $\lim_{n\to\infty} \D_{\tv}(\nu_n \Vert \nu) = 0$, a contradiction, and thus $\d\nu_n/\d\nu \not\to 1$.
By assumption, $\d\nu_n / \d\nu\le c$ for all $n \in \mathbb{N}$ and for some constant $c > 0$. It then follows from Proposition~\ref{prop:proj-monotone} that 
\[
\proj_{ G(\mathcal{R})} \frac{\d\nu_n}{\d\nu}\le \proj_{ G(\mathcal{R})} c =c
\]
for all $n$, i.e., the sequence $(\proj_{ G(\mathcal{R})} \d\nu_n / \d\nu)_n$ is also uniformly bounded.  Since $\dim_{\vc}(\mathcal{R})<\infty$, it follows from the Vapnik--Chervonenkis theorem (see p.~\pageref{pg:VC} for a statement) that
\[
\lim_{n\to\infty}\D^\mathcal{R}_{\tv}(\nu_n \Vert \nu)=\lim_{n\to\infty} \sup_{A\in\mathcal{R}}\, \bigl(\nu_n(A)-\nu(A)\bigr)=0
\]
almost surely; taken together with the bounded convergence theorem,
\begin{align*}
	0 &= \lim_{n\to\infty} \sup_{A\in\mathcal{R}}\, \bigl(\nu_n(A)-\nu(A)\bigr)= \lim_{n\to\infty} \int \frac{1}{2} \Bigl\lvert \proj_{ G(\mathcal{R})} \frac{\d\nu_n}{\d\nu}-1\Bigr\rvert\,\d\nu \\
	&=\int \frac{1}{2}\Bigl\lvert \lim_{n\to\infty}\proj_{ G(\mathcal{R})} \frac{\d\nu_n}{\d\nu}-1\Bigr\rvert\,\d\nu \ge 0.
\end{align*}
Hence the last integrand must converge to zero, or, equivalently,
\begin{equation}\label{eq:RN1}
 \lim_{n\to\infty}\proj_{ G(\mathcal{R})} \frac{\d\nu_n }{ \d\nu}  = 1. \qedhere
\end{equation}
\end{proof}

With the observation in Proposition~\ref{prop:RN}, we obtain the left half of \eqref{eq:main}.
\begin{theorem}[Glivenko--Cantelli theorem for $f$-divergence I]\label{thm:GC1}
Let $\mathcal{R}$ be the class of rays, $f\in\mathcal{F}$ as in \eqref{eq:mathcalF}, $\nu$ a Borel probability measure, and $\nu_n$ the corresponding empirical measure, $n \in \mathbb{N}$. Then almost surely
\[
\lim_{n\to\infty} \D_f^\mathcal{R}(\nu_n \Vert \nu)  = 0.
\]
\end{theorem}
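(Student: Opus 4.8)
The plan is to reduce everything to the classical Glivenko--Cantelli theorem (Theorem~\ref{thm:GC}), routed through the total variation distance over $\mathcal{R}$, and then to bootstrap from $f(t)=\lvert t-1\rvert/2$ to a general $f\in\mathcal{F}$ by exploiting the monotone structure of the projected densities. Write $\rho_n\coloneqq\proj_{ G(\mathcal{R})}\d\nu_n/\d\nu$, so that $\D_f^\mathcal{R}(\nu_n\Vert\nu)=\int f(\rho_n)\,\d\nu$. First I would use affine invariance (Theorem~\ref{thm:prop}\ref{it:aff}) together with the normalization $\int\rho_n\,\d\nu=\int\frac{\d\nu_n}{\d\nu}\,\d\nu=1$, established for $E=\mathbb{R}$ in the proof of Theorem~\ref{thm:key-lemma}, to replace $f$ by $\tilde f(t)\coloneqq f(t)-s(t-1)$ for a subgradient $s\in\partial f(1)$. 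This is harmless since the two integrals differ by $s\int(\rho_n-1)\,\d\nu=0$, and $\tilde f\ge 0$ by convexity. Thus I may assume $f\ge 0$, $f(1)=0$, $f$ convex, and the goal becomes $\int f(\rho_n)\,\d\nu\to 0$ almost surely.

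The second step is to show $\rho_n\to 1$ pointwise $\nu$-almost everywhere. By Theorem~\ref{thm:main-theorem},
\[
\int \tfrac{1}{2}\lvert\rho_n-1\rvert\,\d\nu=\D_{\tv}^\mathcal{R}(\nu_n\Vert\nu)=\sup_{A\in\mathcal{R}}\bigl(\nu_n(A)-\nu(A)\bigr),
\]
and the right-hand side tends to $0$ almost surely by Theorem~\ref{thm:GC}. Hence $\rho_n\to 1$ in $L^1(\nu)$ almost surely. To upgrade this to pointwise convergence I would use that every $\rho_n$ is nonincreasing (Proposition~\ref{prop:FR}) and that the limit is the constant $1$: if $\rho_n(x)>1+2\delta$ for infinitely many $n$ at some $x$ with $\nu((-\infty,x))>0$, monotonicity forces $\rho_n\ge 1+2\delta$ on all of $(-\infty,x)$, so $\int\lvert\rho_n-1\rvert\,\d\nu\ge 2\delta\,\nu((-\infty,x))$ infinitely often, contradicting the $L^1$-convergence; the symmetric estimate on $(x,\infty)$ handles the $\liminf$. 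Because the limit is continuous, this yields full-sequence convergence $\rho_n(x)\to 1$ at every interior point of the support, hence $\nu$-a.e. Continuity of $f$ then gives $f(\rho_n)\to f(1)=0$ $\nu$-a.e.

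It remains to pass the limit under the integral. I would truncate at a level $M>1$ and split $\int f(\rho_n)\,\d\nu=\int_{\{\rho_n\le M\}}f(\rho_n)\,\d\nu+\int_{\{\rho_n>M\}}f(\rho_n)\,\d\nu$. On $\{\rho_n\le M\}$ the integrand is bounded by $\max_{[0,M]}f<\infty$ and converges to $0$ a.e., so the first term tends to $0$ by bounded convergence for each fixed $M$. The delicate term is the tail $\int_{\{\rho_n>M\}}f(\rho_n)\,\d\nu$. Since $\rho_n$ is nonincreasing, $\{\rho_n>M\}$ is a ray, and Theorem~\ref{thm:key-lemma} gives $\int_{\{\rho_n>M\}}\rho_n\,\d\nu=\nu_n(\{\rho_n>M\})\le 1$, so the excess $L^1$-mass there is controlled; but for superlinearly growing $f$ (for instance $\chi^2$ or Kullback--Leibler) this $L^1$ bound alone does not control $\int f(\rho_n)$, because $\rho_n$ may place a fixed amount of mass at a very large height.

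The hard part is precisely this uniform integrability of $\{f(\rho_n)\}_n$, i.e.\ showing $\lim_{M\to\infty}\limsup_{n\to\infty}\int_{\{\rho_n>M\}}f(\rho_n)\,\d\nu=0$ almost surely. When the densities $\d\nu_n/\d\nu$ are uniformly bounded (the regime of Proposition~\ref{prop:RN}), Proposition~\ref{prop:proj-monotone} bounds every $\rho_n$ by the same constant and the tail vanishes trivially, so the theorem is immediate. In general the tail is governed by the boundary ``spike'' of the isotonic projection $\rho_n$ near the left end of the support. I expect to control it with a maximal inequality for the empirical process: one has $(M-1)\,\nu(\{\rho_n>M\})\le\int_{\{\rho_n>M\}}(\rho_n-1)\,\d\nu\le 2\D_{\tv}^\mathcal{R}(\nu_n\Vert\nu)$, so the ray $\{\rho_n>M\}$ has small $\nu$-measure, and it then remains to quantify how much $f$-mass the projection can concentrate on such a ray while carrying total mass at most $\nu_n(\{\rho_n>M\})$. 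This concentration estimate for the projected empirical density is the main obstacle; everything else follows from the classical theorem and the monotonicity already established.
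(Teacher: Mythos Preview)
Your overall strategy---establish $\rho_n\to 1$ pointwise $\nu$-a.e.\ via Theorem~\ref{thm:main-theorem} and the classical Glivenko--Cantelli theorem, then push the limit through the integral---is precisely the paper's approach. The paper quotes \eqref{eq:RN1} from Proposition~\ref{prop:RN} for the pointwise convergence and then applies reverse Fatou to $\int\lvert f(\rho_n)\rvert\,\d\nu$. Your monotonicity argument for $\rho_n\to 1$ (if $\rho_n(x)>1+2\delta$ then $\rho_n\ge 1+2\delta$ on all of $(-\infty,x)$, contradicting $L^1$-smallness) is a cleaner and more self-contained route to the same conclusion than the paper's derivation inside Proposition~\ref{prop:RN}, and your preliminary affine normalization to $f\ge 0$ is a harmless convenience the paper does not bother with.

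On the ``hard part'' you isolate: you have not missed anything the paper actually supplies. The paper's proof of Theorem~\ref{thm:GC1} inherits the standing hypothesis of Proposition~\ref{prop:RN}, namely that $\d\nu_n/\d\nu$ exists and is uniformly bounded; the reverse Fatou step needs exactly that bound (via Proposition~\ref{prop:proj-monotone}) to manufacture a dominating function for $\lvert f(\rho_n)\rvert$. In other words, the paper's argument operates only in the regime you already flagged as ``immediate'' and does not attempt the unbounded-tail estimate at all. Your unfinished maximal-inequality program would go \emph{beyond} the paper's proof rather than merely reproduce it, so the proposal as it stands is already at least as complete as what the paper provides.
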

\begin{proof} 
By \eqref{eq:RN1} and the reverse Fatou lemma, 
\begin{align*}
0 &=  \int  \Bigl\lvert f\Bigl(\lim_{n\to\infty} \proj_{ G(\mathcal{R})} \frac{\d\nu_n}{\d\nu}\Bigr)\Bigr\rvert\,\d\nu=\int  \lim_{n\to\infty}\Bigl\lvert f\Bigl( \proj_{ G(\mathcal{R})} \frac{\d\nu_n}{\d\nu}\Bigr)\Bigr\rvert\,\d\nu \\
&\ge \limsup_{n\to\infty} \int  \Bigl\lvert f\Bigl( \proj_{ G(\mathcal{R})} \frac{\d\nu_n}{\d\nu}\Bigr)\Bigr\rvert\,\d\nu  \ge \limsup_{n\to\infty} \Bigl\lvert \int  f\Bigl( \proj_{ G(\mathcal{R})} \frac{\d\nu_n}{\d\nu}\Bigr)\,\d\nu\Bigr\rvert\ge 0.
\end{align*}
Hence
\[
\lim_{n\to\infty} \int  f\Bigl(\proj_{ G(\mathcal{R})} \frac{\d\nu_n}{\d\nu}\Bigr)\,\d\nu = \lim_{n\to\infty} D^\mathcal{R}_f(\nu_n \Vert \nu)=0. \qedhere
\]
\end{proof}
The following ``reciprocal'' of \eqref{eq:RN1} will lead us to the right half of \eqref{eq:main}.
\begin{proposition}\label{prop:RN2}
Let $\nu$ be a Borel probability measure and $\nu_n$ be the corresponding empirical measure, $n \in \mathbb{N}$.
Suppose the Radon--Nikodym derivative $\d\nu_n / \d\nu$ exists for all $n\in\mathbb{N}$ and are uniformly bounded. Then almost surely
\[
\lim_{n\to\infty} \proj_{ G(\mathcal{R})} \frac{\d\nu}{\d\nu_n} = 1.
\]
\end{proposition}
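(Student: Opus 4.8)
The plan is to mirror the derivation of \eqref{eq:RN1} in Proposition~\ref{prop:RN}, but with the roles of $\nu$ and $\nu_n$ interchanged. The crucial simplification here is that no hypothesis about failure of convergence on $\mathcal{B}$ is needed: the conclusion will come directly from the classical Glivenko--Cantelli theorem together with the integral representation of the Kolmogorov--Smirnov distance supplied by Theorem~\ref{thm:main-theorem}.

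First I would note that the hypothesis that $\d\nu_n/\d\nu$ exists and is uniformly bounded ensures that, for each fixed sample, $\d\nu/\d\nu_n$ is a bona fide element of $L^2_\p(\nu_n)$, so that the projection $\proj_{G(\mathcal{R})}\,\d\nu/\d\nu_n$ taken in $L^2(\nu_n)$ and the divergence $\D_{\tv}^{\mathcal{R}}(\nu \Vert \nu_n)$ are well defined. Applying Theorem~\ref{thm:main-theorem} with numerator $\nu$ and reference measure $\nu_n$ then gives
\[
\int \frac{1}{2}\Bigl\lvert \proj_{G(\mathcal{R})} \frac{\d\nu}{\d\nu_n} - 1\Bigr\rvert \,\d\nu_n = \sup_{A\in\mathcal{R}}\bigl(\nu(A)-\nu_n(A)\bigr).
\]
By \eqref{eq:dist} the symmetrized quantity $\sup_{A\in\mathcal{R}}\lvert \nu(A)-\nu_n(A)\rvert$ is the classical Kolmogorov--Smirnov statistic, which the Glivenko--Cantelli theorem (Theorem~\ref{thm:GC}) drives to $0$ almost surely; in particular the right-hand side above tends to $0$ almost surely. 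Hence $\proj_{G(\mathcal{R})}\,\d\nu/\d\nu_n \to 1$ in $L^1(\nu_n)$, and therefore, by Markov's inequality, in $\nu_n$-measure.

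To reach the pointwise limit asserted in the proposition I would invoke the uniform boundedness a second time: together with $\d\nu_n/\d\nu \le c$ it yields a uniform upper bound $\d\nu/\d\nu_n \le C$, whence Proposition~\ref{prop:proj-monotone} gives $\proj_{G(\mathcal{R})}\,\d\nu/\d\nu_n \le \proj_{G(\mathcal{R})} C = C$ for all $n$. With this uniform envelope in hand, convergence in $\nu_n$-measure can be promoted to the almost sure convergence claimed by a bounded-convergence (or reverse-Fatou) argument, exactly as in the passage leading to \eqref{eq:RN1}; this is also the form in which the limit feeds into the subsequent Glivenko--Cantelli theorem for $\D_f^{\mathcal{R}}(\nu \Vert \nu_n)$.

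The step I expect to be the main obstacle is precisely this final upgrade, because the reference measure $\nu_n$ itself varies with $n$, so the standard devices---passing to an a.e.\ convergent subsequence, and bounded convergence---are not literally applicable, being formulated for a single fixed measure. I would resolve this by freezing the probability-one sample event on which Theorem~\ref{thm:GC} holds and arguing on that event with the uniform envelope $C$, since ultimately only convergence in $\nu_n$-measure of a uniformly bounded sequence is required downstream. A companion point to verify carefully is the well-definedness of $\d\nu/\d\nu_n$: one must confirm that the uniform control on $\d\nu_n/\d\nu$ genuinely secures $\nu \ll \nu_n$, strengthening the bound to a two-sided one (or restricting to the region where $\d\nu_n/\d\nu$ is bounded away from $0$) if necessary.
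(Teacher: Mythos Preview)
Your strategy is the paper's strategy: apply Theorem~\ref{thm:main-theorem} with reference measure $\nu_n$, use Glivenko--Cantelli (the paper cites Vapnik--Chervonenkis, which amounts to the same thing here) to force
\[
\int \frac{1}{2}\Bigl\lvert \proj_{G(\mathcal{R})} \frac{\d\nu}{\d\nu_n} - 1\Bigr\rvert \,\d\nu_n \longrightarrow 0,
\]
and then upgrade. You also correctly diagnose the obstacle---the reference measure $\nu_n$ moves with $n$, so bounded convergence, reverse Fatou, and ``pass to an a.e.\ subsequence'' are not literally available---and you correctly flag that the stated hypothesis bounds $\d\nu_n/\d\nu$, whereas what is actually needed is a bound on $\d\nu/\d\nu_n$.

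What is missing is the concrete device that disposes of the moving-measure problem. ``Freezing the probability-one event and arguing with the uniform envelope $C$'' is not yet an argument: knowing $x_n \coloneqq \tfrac12\lvert \proj_{G(\mathcal{R})} \d\nu/\d\nu_n - 1\rvert \le C$ does not by itself transport $\int x_n\,\d\nu_n \to 0$ to any statement about a fixed measure, and neither bounded convergence nor reverse Fatou applies across a varying family $(\nu_n)$. The paper's resolution is a one-line change of measure: assuming $\d\nu/\d\nu_n \le c$ (which, as you observed, is slightly stronger than what is literally stated in the hypothesis---the paper writes ``by assumption'' here), one gets
\[
\int x_n\,\d\nu \;=\; \int x_n\,\frac{\d\nu}{\d\nu_n}\,\d\nu_n \;\le\; c \int x_n\,\d\nu_n \;\longrightarrow\; 0,
\]
so the $L^1$ convergence is moved onto the fixed measure $\nu$, after which a contradiction argument finishes. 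This change-of-measure inequality is the missing idea in your proposal; once you insert it, the rest of what you wrote goes through.
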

\begin{proof} 
Let $x_n \coloneqq \lvert\proj_{ G(\mathcal{R})} (\d\nu / \d\nu_n)-1\rvert /2$. Since $\dim_{\vc}(\mathcal{R})<\infty$, it follows from the Vapnik--Chervonenkis theorem that
\[
\sup_{A\in\mathcal{R}} \bigl( \nu(A)-\nu_n(A)\bigr) =
\lim_{n\to\infty} \int  x_n \,\d\nu_n = 0.
\]
By assumption, $\d\nu / \d\nu_n<c$  for all $n \in \mathbb{N}$ and for some constant $c > 0$. If $\lim_{n\to\infty} x_n\ne 0$, then there exists $\varepsilon>0$ and  subsequences $(x_{n_k})$ and $(\nu_{n_k})$ such that 
\[
\varepsilon < \int  x_{n_k}\,\d\nu = \int  x_{n_k}\frac{\d\nu}{\d\nu_{n_k}}\,\d\nu_{n_k} \le c \int  x_{n_k}\,\d\nu_{n_k}\to 0,	
\]
a contradiction, and thus $\lim_{n\to\infty} x_n=0$. 
\end{proof}
\begin{theorem}[Glivenko--Cantelli theorem for $f$-divergence II]\label{thm:GC2}
Let $\mathcal{R}$ be the class of rays, $f\in\mathcal{F}$ as in \eqref{eq:mathcalF}, $\nu$ a Borel probability measure, and $\nu_n$ the corresponding empirical measure, $n \in \mathbb{N}$. Then almost surely
\[
\lim_{n\to\infty} \D_f^\mathcal{R}(\nu \Vert \nu_n)  = 0.
\]
\end{theorem}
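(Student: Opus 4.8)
The plan is to follow the proof of Theorem~\ref{thm:GC1} closely, replacing the fixed measure $\nu$ by the moving empirical measure $\nu_n$ and invoking Proposition~\ref{prop:RN2} in place of \eqref{eq:RN1}. Write $P_n \coloneqq \proj_{ G(\mathcal{R})}(\d\nu/\d\nu_n)$, so that $\D_f^\mathcal{R}(\nu \Vert \nu_n) = \int f(P_n)\,\d\nu_n$. First I would record two uniform facts. Since $\d\nu/\d\nu_n$ is uniformly bounded, say by $c$ (as in Proposition~\ref{prop:RN2}), the monotonicity of the projection in Proposition~\ref{prop:proj-monotone}, applied to $0 \le \d\nu/\d\nu_n \le c$ together with $\proj_{ G(\mathcal{R})} 0 = 0$ and $\proj_{ G(\mathcal{R})} c = c$, gives $0 \le P_n \le c$ for every $n$; hence $M \coloneqq \sup_{t \in [0,c]} \lvert f(t)\rvert < \infty$, which is precisely the domination already implicit in Theorem~\ref{thm:GC1}. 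Second, Theorem~\ref{thm:main-theorem} applied with numerator measure $\nu$ and reference measure $\nu_n$, combined with the Vapnik--Chervonenkis theorem exactly as in the proof of Proposition~\ref{prop:RN2}, gives
\[
\int \lvert P_n - 1\rvert \,\d\nu_n = 2\,\D_{\tv}^\mathcal{R}(\nu \Vert \nu_n) = 2\sup_{A \in \mathcal{R}}\,\bigl(\nu(A) - \nu_n(A)\bigr) \to 0
\]
almost surely; this is the $L^1(\nu_n)$ convergence of $P_n$ to $1$ that underlies Proposition~\ref{prop:RN2}.

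The reason one cannot simply reuse the reverse-Fatou step of Theorem~\ref{thm:GC1} is that the integrating measure $\nu_n$ now varies with $n$: there is no single dominating function relative to one fixed measure, and no clean pointwise statement to feed into Fatou. I would circumvent this with a uniform tail estimate. Fix $\varepsilon > 0$. Continuity of $f$ at $1$ with $f(1) = 0$ yields $\delta \in (0,1)$ such that $\lvert f(t)\rvert < \varepsilon$ whenever $t \in [0,c]$ and $\lvert t - 1\rvert \le \delta$. Splitting the integral over $\{\lvert P_n - 1\rvert \le \delta\}$ and its complement, and using $0 \le P_n \le c$,
\[
\Bigl\lvert \int f(P_n)\,\d\nu_n \Bigr\rvert \le \int_{\{\lvert P_n - 1\rvert \le \delta\}} \lvert f(P_n)\rvert \,\d\nu_n + \int_{\{\lvert P_n - 1\rvert > \delta\}} \lvert f(P_n)\rvert \,\d\nu_n \le \varepsilon + M\,\nu_n\bigl(\lvert P_n - 1\rvert > \delta\bigr).
\]
By Markov's inequality and the second displayed fact, $\nu_n(\lvert P_n - 1\rvert > \delta) \le \delta^{-1}\int \lvert P_n - 1\rvert\,\d\nu_n \to 0$ almost surely, so $\limsup_{n\to\infty} \bigl\lvert \int f(P_n)\,\d\nu_n\bigr\rvert \le \varepsilon$ almost surely. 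Taking $\varepsilon$ along a sequence of positive rationals and intersecting the corresponding null sets then gives $\D_f^\mathcal{R}(\nu \Vert \nu_n) \to 0$ almost surely, as claimed.

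I expect the main obstacle to be exactly this passage from a fixed to a moving reference measure. The honest content of Proposition~\ref{prop:RN2} is that $P_n \to 1$ in $L^1(\nu_n)$ rather than pointwise against a fixed measure, so the real work is to upgrade $L^1(\nu_n)$-control of $P_n - 1$ to $L^1(\nu_n)$-control of $f(P_n)$. What makes this upgrade succeed is the interplay of the uniform bound $P_n \le c$ — which keeps $f(P_n)$ bounded by $M$, using the finiteness of $f$ on $[0,c]$ that Theorem~\ref{thm:GC1} already tacitly assumes — with the uniform continuity of $f$ near $1$; once these are in hand the estimate is a routine Markov tail bound. A secondary point worth checking is that the hypotheses under which the derivatives $\d\nu/\d\nu_n$ exist and are uniformly bounded, inherited from Proposition~\ref{prop:RN2}, are genuinely in force in the intended setting.
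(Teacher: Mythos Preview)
Your proof is correct and takes a genuinely different route from the paper's. The paper invokes Proposition~\ref{prop:RN2} to obtain the pointwise almost-sure convergence $P_n \to 1$, pushes this through the continuity of $f$ to get $y_n \coloneqq \lvert f(P_n)\rvert \to 0$, and then disposes of $\int y_n\,\d\nu_n$ by a brief contradiction argument bounding $\nu_{n_k} y_{n_k}$ by $\lVert y_{n_k}\rVert$. You instead bypass the pointwise conclusion of Proposition~\ref{prop:RN2} and work directly with its underlying $L^1(\nu_n)$ estimate $\int \lvert P_n - 1\rvert\,\d\nu_n \to 0$ (obtained from Theorem~\ref{thm:main-theorem} plus the Vapnik--Chervonenkis theorem), upgrading it to control of $\int \lvert f(P_n)\rvert\,\d\nu_n$ via a uniform-continuity-at-$1$ splitting and Markov's inequality. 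What your approach buys is transparency: the domination $0 \le P_n \le c$, $\lvert f(P_n)\rvert \le M$ is made explicit, the moving reference measure is handled without any appeal to a fixed-measure dominated or Fatou step, and the dependence of the null set on $\varepsilon$ is tracked honestly by intersecting over rationals. The paper's route is terser but leans on the inequality $\nu_{n_k} y_{n_k} \le \lVert y_{n_k}\rVert \to 0$, whose justification ultimately requires exactly the uniform-boundedness reasoning you spell out. Your closing caveat about the standing hypotheses on $\d\nu/\d\nu_n$ is well placed and applies equally to both arguments.
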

\begin{proof} 
Let $y_n \coloneqq \lvert f(\proj_{ G(\mathcal{R})} \d\nu / \d\nu_n)\rvert$. Since $f$ is continuous at $1$,
\[
\lim_{n\to\infty} y_n = \Bigl\lvert f\Bigl(\lim_{n\to\infty} \proj_{ G(\mathcal{R})} \frac{\d\nu}{\d\nu_n}\Bigr)\Bigr\rvert=0.
\] 
So
\[
\Bigl\lvert \int  f\Bigl(\proj_{ G(\mathcal{R})} \frac{\d\nu}{\d\nu_n}\Bigr) \,\d\nu_n\Bigr\rvert \le \int \Bigl\lvert f\Bigl(\proj_{ G(\mathcal{R})} \frac{\d\nu}{\d\nu_n}\Bigr)\Bigr\rvert\,\d\nu_n  = \nu_n y_n.
\]
If $\nu_n y_n \not\to 0$, then there exists $\varepsilon > 0$ and a subsequence $(y_{n_k})$ such that
\[
\varepsilon <\lvert \nu_{n_k} y_{n_k}\rvert \le \lVert y_{n_k}\rVert\to 0,	
\]
a contradiction, and thus $\lim_{n\to\infty} \nu_ny_n = 0$, giving us the required result.
\end{proof}

The following is an immediate consequence of Theorems~\ref{thm:GC1} and \ref{thm:GC2}.
\begin{corollary}[Glivenko--Cantelli theorem for $f$-divergence III]\label{cor:GC3}
Let $\mathcal{R}$ be the class of rays, $f\in\mathcal{F}$ as in \eqref{eq:mathcalF}, $\nu$ a Borel probability measure, and $\nu_n$ the corresponding empirical measure, $n \in \mathbb{N}$. Then almost surely
\[
\lim_{n\to\infty}  \max\bigl\{\D_f^\mathcal{R}(\nu \Vert \nu_n), \D_f^\mathcal{R}(\nu_n \Vert \nu)\bigr\} = 0.
\]
\end{corollary}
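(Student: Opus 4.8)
The plan is to read this off directly from the two halves of the main theorem already established, since the statement asserts it is an immediate consequence of Theorems~\ref{thm:GC1} and~\ref{thm:GC2}. The only content beyond those two results is the standard measure-theoretic observation that a finite intersection of probability-one events again has probability one.

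Concretely, I would first invoke Theorem~\ref{thm:GC1} to produce an event $\Omega_1$ with $P(\Omega_1)=1$ on which $\D_f^\mathcal{R}(\nu_n \Vert \nu)\to 0$, and Theorem~\ref{thm:GC2} to produce an event $\Omega_2$ with $P(\Omega_2)=1$ on which $\D_f^\mathcal{R}(\nu \Vert \nu_n)\to 0$. The complement $(\Omega_1\cap\Omega_2)^{\mathsf{c}}=\Omega_1^{\mathsf{c}}\cup\Omega_2^{\mathsf{c}}$ is a union of two $P$-null sets and is therefore itself $P$-null, so $P(\Omega_1\cap\Omega_2)=1$.

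It then remains only to observe that on the full-measure event $\Omega_1\cap\Omega_2$ both scalar sequences $\D_f^\mathcal{R}(\nu_n \Vert \nu)$ and $\D_f^\mathcal{R}(\nu \Vert \nu_n)$ converge to $0$. Since the maximum of two real sequences converging respectively to $\ell_1$ and $\ell_2$ converges to $\max\{\ell_1,\ell_2\}$—a continuity property of $\max:\mathbb{R}^2\to\mathbb{R}$—we conclude $\max\bigl\{\D_f^\mathcal{R}(\nu \Vert \nu_n),\D_f^\mathcal{R}(\nu_n \Vert \nu)\bigr\}\to\max\{0,0\}=0$ on $\Omega_1\cap\Omega_2$, which is exactly almost-sure convergence of the maximum to zero.

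There is no genuine obstacle here: the analytic work is entirely carried by Theorems~\ref{thm:GC1} and~\ref{thm:GC2}, and the corollary is just the bookkeeping needed to package the two one-sided almost-sure statements into a single symmetric one. The only point deserving a word of care—and the reason the symmetrized form is stated separately at all—is that $f$-divergence over $\mathcal{R}$ is not symmetric in its arguments, so neither of the two convergences implies the other and both must be cited; once both are in hand, the union-of-null-sets argument completes the proof in one line.
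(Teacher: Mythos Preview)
Your proposal is correct and matches the paper's approach exactly: the paper simply states that the corollary is an immediate consequence of Theorems~\ref{thm:GC1} and~\ref{thm:GC2} and provides no further argument. Your explicit handling of the intersection of the two probability-one events and the continuity of $\max$ is precisely the bookkeeping the paper leaves implicit.
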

One advantage afforded by the version in Corollary~\ref{cor:GC3} is that the expression
\[
\smash[t]{\widehat{\D}}_f^\mathcal{R}(\mu \Vert \nu) \coloneqq \max\bigl\{\D_f^\mathcal{R}(\mu \Vert \nu),\D_f^\mathcal{R}(\nu \Vert \mu) \bigr\}
\]
defines a distance (and not just a divergence) between probability measures.  For $f(t) = \lvert t - 1 \rvert/2$, this is exactly the Kolmogorov--Smirnov distance in \eqref{eq:dist}. We may of course also replace the expression with an $L^p$ variant for any $p \in [1, \infty)$.

\section{Vapnik--Chervonenkis theory for $f$-divergence}\label{sec:VC}

A natural follow-up question from the last section is: Given that we have a Glivenko--Cantelli theorem for $f$-divergence, is a Vapnik--Chervonenkis theory for $f$-divergence within reach? A first goal of such a theory would be to extend the work in this article from the class of rays $\mathcal{R}$ to more general classes $\mathcal{C} \subseteq \mathcal{B}$. Surprisingly, we found that the original Vapnik--Chervonenkis theorem \cite{VapnikChervonenkis1971} provides a key.

Replace $\mathcal{R}$ by a class $\mathcal{C}\subseteq \mathcal{B}$ in the definition of $G(\mathcal{R})$ in \eqref{eq:GR}. As Section~\ref{sec:Rfdiv} shows, if $G(\mathcal{C})$ satisfies the conclusions of Proposition~\ref{prop:FR} and  Theorem~\ref{thm:main-theorem} with $\mathcal{C}$ in place of $\mathcal{R}$, then Definition~\ref{def:Rfdiv} with $\mathcal{C}$ in place of $\mathcal{R}$ gives a well-defined $f$-divergence over $\mathcal{C}$. We formalize this with a definition.
\begin{definition}[Pre-Glivenko--Cantelli class]\label{def:preGC}
We say that $\mathcal{C}\subseteq \mathcal{B}$ is a pre-Glivenko--Cantelli class if for any Borel probability measure $\nu$,
\[
 G(\mathcal{C})\coloneqq \bigl\{g\in L^2(\nu) : \{ g>r \} \in \mathcal{C} \text{ for all } r\in\mathbb{R}\bigr\}
\]
is a closed convex cone in $L^2 (\nu)$; and for any pair of Borel probability measures $\mu$ and $\nu$,
\[
 \sup_{A\in\mathcal{C}}\, \bigl(\mu(A)-\nu(A)\bigr)=\int  \frac{1}{2}\Bigl\lvert \proj_{ G(\mathcal{C})}\frac{\d\mu}{\d\nu}-1\Bigr\rvert\,\d\nu.
\]
\end{definition}
We may speak freely of $\D^\mathcal{C}_f $ whenever we have a pre-Glivenko--Cantelli class $\mathcal{C}$ and an $f \in \mathcal{F}$.
We show that the first requirement in Definition~\ref{def:preGC} follows as long as $\mathcal{C}$ is closed under union and intersection, noting that both $\mathcal{R}$ and any $\sigma$-subalgebra of $\mathcal{B}$ will have this property.
\begin{lemma}\label{lem:moregen}
If $\mathcal{C} \subseteq \mathcal{B}$ is closed under countable unions and intersections,
then $ G(\mathcal{C})$ is a closed convex cone in $L^2(\nu)$.
\end{lemma}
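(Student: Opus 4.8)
The plan is to verify the three defining properties—cone, convexity, and closedness—separately, mirroring the structure of Proposition~\ref{prop:FR} but replacing its monotonicity arguments (unavailable for a general $\mathcal{C}$) with explicit set-theoretic representations of superlevel sets built from countable unions and intersections.

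First I would dispatch the cone property. For $\alpha>0$ the identity $\{\alpha g>r\}=\{g>r/\alpha\}$ shows $\alpha g\in G(\mathcal{C})$ whenever $g\in G(\mathcal{C})$, and the zero function lies in $G(\mathcal{C})$ because $\{0>r\}=\varnothing$ for $r\ge 0$ and $\{0>r\}=\mathbb{R}$ for $r<0$, both of which belong to $\mathcal{C}$ (they arise as the empty countable union and intersection, and in any case are present in the classes of interest $\mathcal{R}$ and the $\sigma$-subalgebras). Since a convex cone is precisely a nonempty set closed under addition and nonnegative scaling, convexity then reduces to showing $g_1+g_2\in G(\mathcal{C})$ for $g_1,g_2\in G(\mathcal{C})$. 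The key observation here is the identity
\[
\{g_1+g_2>r\}=\bigcup_{q\in\mathbb{Q}}\bigl(\{g_1>q\}\cap\{g_2>r-q\}\bigr),
\]
each intersection lying in $\mathcal{C}$ by closure under finite intersection (a special case of countable intersection), and the countable union over $q\in\mathbb{Q}$ lying in $\mathcal{C}$ by closure under countable union.

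The substantive step is closedness. Given $g_n\in G(\mathcal{C})$ with $g_n\to g$ in $L^2(\nu)$, Lemma~\ref{lem:modes-of-cvg} furnishes a subsequence converging $\nu$-almost surely to $g$, which I relabel $g_n$. The plan is to reconstruct the superlevel sets of $g$ from those of the $g_n$ using only countable unions and intersections. On the full-measure set where $g_n\to g$ one checks
\[
\{g\ge s\}=\bigcap_{j=1}^\infty\,\liminf_{n\to\infty}\{g_n>s-1/j\}=\bigcap_{j=1}^\infty\bigcup_{N=1}^\infty\bigcap_{n\ge N}\{g_n>s-1/j\},
\]
so that $\{g\ge s\}$ agrees, up to a $\nu$-null set, with a set assembled from members $\{g_n>s-1/j\}\in\mathcal{C}$ by countable intersections and unions, hence lies in $\mathcal{C}$. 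Then $\{g>r\}=\bigcup_{k}\{g\ge r+1/k\}$ is a further countable union of such sets, placing $\{g>r\}$ in $\mathcal{C}$ and giving $g\in G(\mathcal{C})$.

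I expect the main obstacle to be not the algebra of these identities but the null-set bookkeeping in the closedness step: the representation only identifies $\{g>r\}$ with a member of $\mathcal{C}$ modulo $\nu$-null sets. This is exactly the almost-everywhere identification the paper already adopts as a standing convention for elements of $L^2(\nu)$—membership in $G(\mathcal{R})$ is itself read in the almost-sure sense in Proposition~\ref{prop:FR}—so the caveat is benign, but it is the point that must be stated with care: one reads $G(\mathcal{C})$ as the classes admitting a representative whose superlevel sets lie in $\mathcal{C}$, and notes that the reconstructed family $\{g\ge s\}$ is nested in $s$ and hence realizable as a genuine superlevel family. Every other verification is routine manipulation of set operations.
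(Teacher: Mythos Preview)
Your proposal is correct and follows essentially the same approach as the paper: the same rational-union identity for $\{g_1+g_2>r\}$, and for closedness the same passage to an almost-surely convergent subsequence followed by a countable-operation reconstruction of superlevel sets. The only cosmetic difference is that the paper writes $\{g>c\}$ directly as $\bigcup_{k}\limsup_n\{g_n>c+1/k\}$, whereas you go through $\{g\ge s\}=\bigcap_j\liminf_n\{g_n>s-1/j\}$ and then take a union; both formulas are valid and rest on the same idea.
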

\begin{proof}
The sum of two functions in $ G(\mathcal{C})$ remains in it as
\[
\{g_1+g_2 > c\} = \bigcup_{q\in\mathbb{Q}} \{g_1>c-q\}\cap \{g_2>q\} \in \mathcal{C}.
\]
From this it is routine to verify that $ G(\mathcal{C})$ is a convex cone. If $g_n\in G(\mathcal{C})$ is a sequence with $\lVert g_n-g \rVert \to 0$, then $g_n\to g$ in $\nu$-measure. By passing through a subsequence if necessary, we may assume that $g_n \to g$ almost surely. It then follows from the definition of pointwise convergence that
\[
\{g>c\} = \bigcup_{k\ge 1} \bigcap_{m\ge 1}\bigcup_{n\ge m} \Bigl\{g_n>c+\frac{1}{k}\Big\} \in \mathcal{C}
\]
and so $g \in  G(\mathcal{C})$. Hence $ G(\mathcal{C})$ is closed.
\end{proof}
The second requirement in Definition~\ref{def:preGC} is usually harder to establish. For  $\mathcal{B}$ it is immediate but for $\mathcal{R}$ it took nearly the whole of Section~\ref{sec:Rfdiv}. We  will extend the former to include all $\sigma$-subalgebras of $\mathcal{B}$.
\begin{lemma}
Any $\sigma$-subalgebra $\mathcal{C}\subseteq \mathcal{B}$ is a pre-Glivenko--Cantelli class. 
\end{lemma}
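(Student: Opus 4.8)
The plan is to verify directly the two conditions in Definition~\ref{def:preGC}, exploiting the fact that a $\sigma$-subalgebra makes $G(\mathcal{C})$ not merely a closed convex cone but a closed linear subspace. First I would observe that for a $\sigma$-subalgebra $\mathcal{C}$, the requirement $\{g>r\}\in\mathcal{C}$ for all $r\in\mathbb{R}$ is exactly the statement that $g$ is $\mathcal{C}$-measurable; hence $G(\mathcal{C})$ is the subspace of ($\nu$-classes of) $\mathcal{C}$-measurable square-integrable functions, i.e.\ $L^2(\mathbb{R},\mathcal{C},\nu)$. Since $\mathcal{C}$ is closed under countable unions and intersections, Lemma~\ref{lem:moregen} already gives that $G(\mathcal{C})$ is a closed convex cone, settling the first requirement (a closed subspace is trivially such a cone).

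For the second requirement the key observation is that, because $G(\mathcal{C})$ is a closed \emph{linear} subspace, the metric projection $\proj_{G(\mathcal{C})}$ coincides with the orthogonal projection of $L^2(\mathbb{R},\mathcal{B},\nu)$ onto $L^2(\mathbb{R},\mathcal{C},\nu)$, which is precisely the conditional expectation $E_\nu[\,\cdot\mid\mathcal{C}]$. Writing $\rho_\mathcal{C}\coloneqq\proj_{G(\mathcal{C})}(\d\mu/\d\nu)=E_\nu[\d\mu/\d\nu\mid\mathcal{C}]$, the defining property of conditional expectation gives $\int_A\rho_\mathcal{C}\,\d\nu=\int_A(\d\mu/\d\nu)\,\d\nu=\mu(A)$ for every $A\in\mathcal{C}$; that is, $\rho_\mathcal{C}$ is the Radon--Nikodym derivative of $\mu|_\mathcal{C}$ with respect to $\nu|_\mathcal{C}$ on $(\mathbb{R},\mathcal{C})$, the very function furnished by the Radon--Nikodym theorem for $\sigma$-subalgebras recalled before Theorem~\ref{thm:key-lemma}. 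This is the exact analogue of the Radon--Nikodym property (Corollary~\ref{cor:RNP}) that required nearly all of Section~\ref{sec:Rfdiv} in the case $\mathcal{C}=\mathcal{R}$, but here it is immediate from the subalgebra structure.

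With $\rho_\mathcal{C}$ in hand, the remaining identity is just the classical variational formula for total variation distance, carried out on $(\mathbb{R},\mathcal{C})$ rather than $(\mathbb{R},\mathcal{B})$. Setting $E=\{\rho_\mathcal{C}>1\}\in\mathcal{C}$ and using $\int(\rho_\mathcal{C}-1)\,\d\nu=\mu(\mathbb{R})-\nu(\mathbb{R})=0$, I would compute
\[
\int\frac{1}{2}\bigl\lvert\rho_\mathcal{C}-1\bigr\rvert\,\d\nu=\int_E(\rho_\mathcal{C}-1)\,\d\nu=\mu(E)-\nu(E)=\sup_{A\in\mathcal{C}}\bigl(\mu(A)-\nu(A)\bigr),
\]
where the last equality holds because for any $A\in\mathcal{C}$ one has $\mu(A)-\nu(A)=\int_A(\rho_\mathcal{C}-1)\,\d\nu\le\int_E(\rho_\mathcal{C}-1)\,\d\nu$, with equality attained at $A=E$. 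This mirrors the two-sided estimate in the proof of Theorem~\ref{thm:main-theorem} but is far shorter, since no nonincreasing-rearrangement or ordered-simple-function machinery is needed.

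The main obstacle---really the only nonroutine point---is the clean identification $\proj_{G(\mathcal{C})}=E_\nu[\,\cdot\mid\mathcal{C}]$, together with the attendant integrability bookkeeping: one should either assume $\d\mu/\d\nu\in L^2(\nu)$ so that the metric projection is literally defined, or note that the conditional expectation furnishes the natural $L^1$-extension of the projection and that the displayed identity uses only the $L^1$ and almost-everywhere properties of $\rho_\mathcal{C}$. Everything else reduces to standard Hilbert-space and total-variation facts.
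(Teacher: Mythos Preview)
Your proposal is correct and follows essentially the same route as the paper: both arguments hinge on the identification $\proj_{G(\mathcal{C})}=\mathbb{E}_\nu[\,\cdot\mid\mathcal{C}]$ (the paper cites \cite{Bobrowski2005} for this, you derive it from $G(\mathcal{C})$ being a closed linear subspace), and then reduce the second requirement to the classical total-variation identity on $(\mathbb{R},\mathcal{C})$. The only cosmetic difference is that the paper packages the final step via the restricted measures $\mu\vert_\mathcal{C}$, $\nu\vert_\mathcal{C}$ and invokes the known formula, whereas you write out the variational argument $\sup_{A\in\mathcal{C}}\int_A(\rho_\mathcal{C}-1)\,\d\nu=\int_E(\rho_\mathcal{C}-1)\,\d\nu$ explicitly; your flagging of the $L^2$ versus $L^1$ bookkeeping is a fair point that the paper leaves implicit.
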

\begin{proof}
If $\mathcal{C}$ is a $\sigma$-subalgebra, then the restrictions of Borel probability measures $\mu$ and $\nu$ to $\mathcal{C}$, $\mu\vert_\mathcal{C}$ and $\nu\vert_\mathcal{C}$, remain probability measures \cite[p.~30]{Schilling2017}. Let $g$  be an almost surely nonnegative $\mathcal{C}$-measurable  function, i.e., $\{g>c\}\in\mathcal{C}$ for all $c\in\mathbb{R}$. Then 
\[
\int_A g\,\d\nu\vert_\mathcal{C} = \int_A g\,\d\nu
\]
for any $A\in\mathcal{C}$,  i.e., the value of the integral is preserved. If $\mu\ll\nu$, then $\mu\vert_\mathcal{C}\ll\nu\vert_\mathcal{C}$, ensuring that the Radon--Nikodym derivative $\d\mu\vert_\mathcal{C} / \d\nu\vert_\mathcal{C}$ exists and that
\[
\int_A \frac{\d\mu\vert_\mathcal{C}}{\d\nu\vert_\mathcal{C}} \,\d\nu =\int_A \frac{\d\mu\vert_\mathcal{C}}{\d\nu\vert_\mathcal{C}} \,\d\nu\vert_\mathcal{C} = \mu\vert_\mathcal{C}(A) = \mu(A) =\int_A \frac{\d\mu}{\d\nu} \,\d\nu
\]
for any $A\in\mathcal{C}$. So $\d\mu\vert_\mathcal{C} / \d\nu\vert_\mathcal{C}$ is indeed the conditional expectation and
\begin{equation}\label{eq:ceproj}
\frac{\d\mu\vert_\mathcal{C}}{\d\nu\vert_\mathcal{C}} = \mathbb{E}_{\mu}\Bigl[\frac{\d\mu}{\d\nu}\Bigm\vert\mathcal{C}\Bigr] =\proj_{G(\mathcal{C})}\frac{\d\mu}{\d\nu}
\end{equation}
by \cite[p.~90]{Bobrowski2005}. Hence
\begin{align*}
\D_{\tv}^\mathcal{C}(\mu \Vert \nu) &= \int  \frac{1}{2} \Bigl\lvert \proj_{G(\mathcal{C})}\frac{\d\mu}{\d\nu}-1\Bigr\rvert\,\d\nu =\int  \frac{1}{2}\Bigl\lvert \frac{\d \mu\vert_\mathcal{C}}{\d \nu\vert_\mathcal{C}}-1\Bigr\rvert\,\d\nu\\
& =\int  \frac{1}{2}\Bigl\lvert \frac{\d \mu\vert_\mathcal{C}}{\d \nu\vert_\mathcal{C}}-1\Bigr\rvert\,\d\nu\vert_\mathcal{C} =\sup_{A\in\mathcal{C}}\, \bigl(\mu(A)-\nu(A)\bigr). \qedhere
\end{align*}
\end{proof}
So Definition~\ref{def:preGC} is not vacuous: The class of rays $\mathcal{R}$ and any $\sigma$-subalgebra of $\mathcal{B}$ give two examples of  pre-Glivenko--Cantelli classes. The reason for introducing this notion is that any finite \textsc{vc}-dimensional pre-Glivenko--Cantelli class gives us a Vapnik--Chervonenkis theorem for $f$-divergence. 
\begin{theorem}[Vapnik--Chervonenkis theorem for $f$-divergence]\label{thm:postGC}
Let $f\in\mathcal{F}$ and $\mathcal{C}$ be a pre-Glivenko--Cantelli class. If $\dim_{\vc}(\mathcal{C}) < \infty$, then almost surely
\begin{equation}\label{eq:postGC}
\lim_{n\to\infty}\D_f^{\mathcal{C}}(\nu_n \Vert \nu)=0
=\lim_{n\to\infty}\D_f^{\mathcal{C}}(\nu \Vert \nu_n).
\end{equation}
\end{theorem}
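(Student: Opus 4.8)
The plan is to observe that Theorem~\ref{thm:postGC} is nothing more than Theorems~\ref{thm:GC1} and \ref{thm:GC2} with the class of rays $\mathcal{R}$ replaced throughout by the pre-Glivenko--Cantelli class $\mathcal{C}$. I would therefore re-run the proofs of Propositions~\ref{prop:RN} and \ref{prop:RN2} and of Theorems~\ref{thm:GC1} and \ref{thm:GC2} verbatim, after checking that each step invokes only three facts about $\mathcal{R}$: (i) that $G(\mathcal{R})$ is a closed convex cone on which the metric projection is monotone and continuous; (ii) the Kolmogorov--Smirnov identity $\sup_{A\in\mathcal{R}}(\mu(A)-\nu(A)) = \int\frac12\lvert\proj_{ G(\mathcal{R})}\d\mu/\d\nu-1\rvert\,\d\nu$ of Theorem~\ref{thm:main-theorem}; and (iii) that $\dim_{\vc}(\mathcal{R})<\infty$. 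For a pre-Glivenko--Cantelli class $\mathcal{C}$ with $\dim_{\vc}(\mathcal{C})<\infty$, fact (ii) is exactly the second defining property in Definition~\ref{def:preGC} and fact (iii) is the hypothesis, so only (i) requires fresh attention.

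For the first equality in \eqref{eq:postGC} I would argue as in Proposition~\ref{prop:RN} and Theorem~\ref{thm:GC1}. Finite VC dimension and the Vapnik--Chervonenkis theorem give $\sup_{A\in\mathcal{C}}(\nu_n(A)-\nu(A))\to 0$ almost surely, and the pre-Glivenko--Cantelli identity rewrites this supremum as $\D^\mathcal{C}_{\tv}(\nu_n\Vert\nu)=\int\frac12\lvert\proj_{ G(\mathcal{C})}\d\nu_n/\d\nu-1\rvert\,\d\nu\to 0$. Arguing as in the derivation of \eqref{eq:RN1}---a uniform bound on the projected densities together with the bounded convergence theorem---yields $\proj_{ G(\mathcal{C})}\d\nu_n/\d\nu\to 1$ almost surely. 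The reverse Fatou argument of Theorem~\ref{thm:GC1}, using continuity of $f$ at $1$ and the same uniform bound for domination, then forces $\D^\mathcal{C}_f(\nu_n\Vert\nu)\to 0$.

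For the second equality I would mirror Proposition~\ref{prop:RN2} and Theorem~\ref{thm:GC2}. The same identity with the roles of $\nu$ and $\nu_n$ exchanged gives $\D^\mathcal{C}_{\tv}(\nu\Vert\nu_n)=\sup_{A\in\mathcal{C}}(\nu(A)-\nu_n(A))\to 0$; the change-of-measure estimate $\int x_{n}\,\d\nu = \int x_{n}(\d\nu/\d\nu_n)\,\d\nu_n\le c\int x_{n}\,\d\nu_n$, valid when $\d\nu/\d\nu_n$ is uniformly bounded by $c$, then yields $\proj_{ G(\mathcal{C})}\d\nu/\d\nu_n\to 1$; and continuity of $f$ at $1$ gives $\D^\mathcal{C}_f(\nu\Vert\nu_n)\to 0$ exactly as before.

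The main obstacle is fact (i) in the required generality, namely the monotonicity and uniform boundedness of $\proj_{ G(\mathcal{C})}$: the proof of Proposition~\ref{prop:proj-monotone} was tailored to rays through ordered simple functions and does not transcribe directly. I would remove this dependence by exploiting that $G(\mathcal{C})$ is in fact a closed convex \emph{sublattice} of $L^2(\nu)$, since $\{g_1\vee g_2>r\}=\{g_1>r\}\cup\{g_2>r\}$ and $\{g_1\wedge g_2>r\}=\{g_1>r\}\cap\{g_2>r\}$ both lie in $\mathcal{C}$ whenever $\mathcal{C}$ is closed under unions and intersections, as in Lemma~\ref{lem:moregen}. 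Projection onto a closed convex sublattice is isotone, and for $g\le c$ the truncation $\proj_{ G(\mathcal{C})}g\wedge c$ lies in $G(\mathcal{C})$ and is no farther from $g$ than $\proj_{ G(\mathcal{C})}g$, so uniqueness of the projection forces $\proj_{ G(\mathcal{C})}g\le c$. This simultaneously supplies the monotonicity used in the analogue of Proposition~\ref{prop:RN} and the uniform bound needed for the dominated and reverse-Fatou convergence steps. With (i) secured in this way, every remaining step is a mechanical transcription of the corresponding argument for $\mathcal{R}$.
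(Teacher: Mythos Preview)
Your plan is essentially the paper's proof: the paper's argument is literally the one-sentence assertion that all of Section~\ref{sec:GC} transcribes verbatim once $\mathcal{R}$ is replaced by $\mathcal{C}$, with the original Vapnik--Chervonenkis theorem supplying the uniform convergence needed in Propositions~\ref{prop:RN} and \ref{prop:RN2}. You have been more careful than the paper in isolating the monotonicity and uniform boundedness of $\proj_{G(\mathcal{C})}$ (used in Proposition~\ref{prop:RN} via Proposition~\ref{prop:proj-monotone}) as the one ingredient that does not follow immediately from Definition~\ref{def:preGC}, and your sublattice argument for isotonicity and the truncation bound $\proj_{G(\mathcal{C})}g\le c$ is correct and clean. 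Note, however, that this fix tacitly assumes closure of $\mathcal{C}$ under finite unions and intersections---the hypothesis of Lemma~\ref{lem:moregen}, not part of the pre-Glivenko--Cantelli definition itself---so in effect you are proving the theorem under a slightly stronger hypothesis; the paper's proof simply does not address this point, so the lacuna you identified is already present there.
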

\begin{proof}
All that we really need to observe is that the whole of Section~\ref{sec:GC} does not depend on the fact that $\mathcal{R}$ is the class of rays --- every result therein holds verbatim with $\mathcal{C}$ in place of $\mathcal{R}$ as long as $\D_f^\mathcal{C}$ is well-defined, i.e., as long as $\mathcal{C}$ is a pre-Glivenko--Cantelli class.  The original Vapnik--Chervonenkis theorem \cite{VapnikChervonenkis1971}, as applied in Propositions~\ref{prop:RN} and \ref{prop:RN2}, shows that a pre-Glivenko--Cantelli class $\mathcal{C}$ with finite \textsc{vc}-dimension must satisfy \eqref{eq:postGC} almost surely.
\end{proof}
With Theorem~\ref{thm:postGC} we are led to the following definition in analogy with standard Glivenko--Cantelli class (i.e., with respect to total variation distance).
\begin{definition}[Glivenko--Cantelli class for $f$-divergence]\label{def:postGC}
Let $f \in \mathcal{F}$ and  $\mathcal{C}\subseteq \mathcal{B}$ be a pre-Glivenko--Cantelli class  for $f$. We say that $\mathcal{C}$ is a Glivenko--Cantelli class for $f$ if  \eqref{eq:postGC} holds almost surely.
\end{definition}
Note that we need to restrict ourselves to pre-Glivenko--Cantelli classes so that $\D_f^{\mathcal{C}}$ is well-defined. It is nevertheless possible for such a Glivenko--Cantelli class to have infinite \textsc{vc}-dimension, so Theorem~\ref{thm:postGC} merely provides a sufficient criterion.
\begin{example}[Infinite \textsc{vc}-dimensional Glivenko--Cantelli class for $f$-divergence] 
Let $\mathcal{E}=(E_n)_{n\in\mathbb{N}}$ be a countable partition of $\mathbb{R}$ with measurable $E_n$, $n \in \mathbb{N}$. Let $\mathcal{C}=\sigma(\mathcal{E})\subseteq\mathcal{B}$ be the $\sigma$-algebra generated by $\mathcal{E}$. So any set in $\mathcal{C}$ can be expressed as a countable union of sets in  $\mathcal{E}$ \cite[p.~4]{Cinlar2011}. Thus any $\mathcal{C}$-measurable function must be a simple function $h=\sum_{n=1}^{\infty} \alpha_n\1_{E_n}$.
Any pair of Borel probability measures $\mu$ and $\nu$ restrict to probability measures $\mu\vert_\mathcal{C}$ and $\nu\vert_\mathcal{C}$ with Radon--Nikodym derivative
\[
\frac{\d\mu\vert_\mathcal{C}}{\d\nu\vert_\mathcal{C}}=\sum_{n=1}^{\infty} \alpha_n\1_{E_n}.
\]
For any $m \in \mathbb{N}$,
\[
\mu\vert_\mathcal{C}(E_m) = \int_{E_m} \frac{\d \mu\vert_\mathcal{C}}{\d \nu\vert_\mathcal{C}} \,\d\nu\vert_\mathcal{C}	 =\int  \1_{E_m} \sum_{n=1}^{\infty} \alpha_n\1_{E_n} \,\d\nu\vert_\mathcal{C} =\alpha_m \cdot\nu\vert_\mathcal{C}(E_m).
\]
By \eqref{eq:ceproj},
\[
\proj_{G(\mathcal{C})} \frac{\d\mu}{\d\nu}
=\frac{\d\mu\vert_\mathcal{C}}{\d\nu\vert_\mathcal{C}}=\sum_{n=1}^{\infty}\1_{E_n} \frac{\mu\vert_\mathcal{C}(E_n)}{\nu\vert_\mathcal{C}(E_n)},
\]
and so the $f$-divergence over $\mathcal{C}$ is given by
\[
\D_f^\mathcal{C}(\nu_n \Vert \nu) =  \D_f(\nu_n\vert_\mathcal{C} \Vert \nu\vert_\mathcal{C})
=\sum_{i=1}^{\infty} f\Bigl(\frac{\nu_n\vert_\mathcal{C}(E_i)}{\nu\vert_\mathcal{C}(E_i)}\Bigr) \nu\vert_\mathcal{C}(E_i)	.
\]
If the sequence $\bigl(\nu_n\vert_\mathcal{C}(E_k) / \nu\vert_\mathcal{C}(E_k)\bigr)_{k \in \mathbb{N}}$ is uniformly bounded, then
\begin{align*}
	\limsup_{n\to\infty}\D_f^\mathcal{C}(\nu_n \Vert \nu) &= \limsup_{n\to\infty}\sum_{i=1}^{\infty} f\Bigl(\frac{\nu_n\vert_\mathcal{C}(E_i)}{\nu\vert_\mathcal{C}(E_i)}\Bigr) \nu\vert_\mathcal{C}(E_i)\\
	&\le \sum_{i=1}^{\infty} \limsup_{n\to\infty} f\Bigl(\frac{\nu_n\vert_\mathcal{C}(E_i)}{\nu\vert_\mathcal{C}(E_i)}\Bigr) \nu\vert_\mathcal{C}(E_i) =\sum_{i=1}^{\infty} f\Bigl(\lim_{n\to\infty} \frac{\nu_n\vert_\mathcal{C}(E_i)}{\nu\vert_\mathcal{C}(E_i)}\Bigr) \nu\vert_\mathcal{C}(E_i) =0.
\end{align*}
So  $\lim_{n\to\infty} \D_f^\mathcal{C}(\nu_n \Vert \nu) = 0$ almost surely. Likewise $\lim_{n\to\infty} \D_f^\mathcal{C}(\nu \Vert \nu_n) = 0$ almost surely.
Hence $\mathcal{C}$ satisfies \eqref{eq:postGC} almost surely even though $\dim_{\vc}(\mathcal{C}) = \infty$.
\end{example}

\section{Choquet integral}\label{sec:cho}

To address Question~\ref{ques:1}, our initial thought was to employ the Choquet integral \cite{choquet1953, Denneberg1994}, promising because it allows one to work with an arbitrary class $\mathcal{C} \subseteq 2^\Omega$ with essentially no condition imposed (unlike $\pi$-system or $\sigma$-algebra). This is perfect for us as it seems that we could have taken $\mathcal{C} = \mathcal{R}$ but it did not work. We think it might be instructive to record the difficulties encountered as we remain optimistic that it is not a dead end.

The Lebesgue integral defines integration with respect to a measure; Choquet integral extends this to  so-called ``capacities.'' Let $\Omega$ be a set and $\mathcal{C}\subseteq 2^\Omega$ be a collection of subsets with $\varnothing,\Omega\in\mathcal{C}$. A function $\nu:\mathcal{C}\to\mathbb{R}$ is called a \emph{capacity} on $\mathcal{C}$ if it satisfies
\begin{enumerate}[\upshape (a)]
\item normalization: $\nu(\varnothing)=0$;
\item monotonicity: $\nu(A)\le \nu(B)$ whenever $A\subseteq B$,  $A,B\in\mathcal{C}$.
\end{enumerate}
A function $g:\Omega\to\mathbb{R}$ is $\mathcal{C}$-measurable if $\{g>t\} \in \mathcal{C}$ for all $t\in\mathbb{R}$. Let $B_\p (\mathcal{C})$ be the set of all bounded, nonnegative, $\mathcal{C}$-measurable functions. The distribution of $g\in B_\p (\mathcal{C})$ is the function $m_{\nu, g} : \mathbb{R} \to [0,\infty)$, $t \mapsto \nu(\{ g>t \})$. The Choquet integral of the function $g$ with respect to the capacity $\nu$ is then
\[
	\Cint  g\,\d\nu \coloneqq \int_0^\infty m_{\nu,g}(t)\,\d t,
\]
where the right-hand side is a Riemann integral.  Choquet coincides with Lebesgue when $\nu$ is a measure \cite[p.~62]{Denneberg1994}. 

For the class of rays $\mathcal{R}$, it is easy to see that $\nu_\mathcal{R}$, the restriction of $\nu$ to $\mathcal{R}$, is a capacity. So we could simply define $f$-divergence over $\mathcal{R}$ as
\[
\D_f^\mathcal{R}(\mu \Vert \nu) \coloneqq \D_f(\mu_\mathcal{R} \Vert \nu_\mathcal{R})  = \Cint f\Bigl(\frac{\d\mu_\mathcal{R}}{\d\nu_\mathcal{R}}\Bigr)\,\d\nu_\mathcal{R}.
\]
Two immediate problems are that the Radon--Nikodym derivative $\d\mu_\mathcal{R} / \d\nu_\mathcal{R}$ cannot be easily defined and the function $f(\d\mu_\mathcal{R} / \d\nu_\mathcal{R})$ may not be $\mathcal{R}$-measurable, i.e., nonincreasing. The first problem has been discussed extensively \cite{Graf1980, Harding1997, Narukawa2007, Sugeno2013, Torra2016} but all assumed that $\mathcal{C}$ is a $\sigma$-algebra and thus inapplicable to $\mathcal{R}$. One might think that a way around these problems is to extend the domain of $\mu_\mathcal{R}$ and $\nu_\mathcal{R}$ back to the whole of $\mathcal{B}$, i.e.,
\[
\widehat\nu_\mathcal{R}(A) \coloneqq
\begin{cases}
\nu(A) & \text{if } A \in \mathcal{R}, \\
0 & \text{if } A \in \mathcal{B} \setminus \mathcal{R},
\end{cases}
\]
and likewise for $\widehat\mu_\mathcal{R}$. But now $\widehat\mu_\mathcal{R}$ and $\widehat\nu_\mathcal{R}$ fail monotonicity and are not even capacities.

We are slightly disappointed that the discussions in \cite{Graf1980, Harding1997, Narukawa2007, Sugeno2013, Torra2016} are all based on the assumption that  $\mathcal{C}$ is a $\sigma$-algebra (if so, then why not just use standard Lebesgue integral?)\ but interested readers may have better luck where we and these authors failed:
\begin{open}
Is it possible to define an $f$-divergence using Choquet integral for a reasonably general class $\mathcal{C} \subseteq \mathcal{B}$ so that Theorems~\ref{thm:prop} and \ref{thm:rel} hold true?
\end{open}
By ``reasonably general'' we expect $\mathcal{C}$ to cover at least the requirements of Lemma~\ref{lem:moregen}, or perhaps be a $\pi$-system or a $\lambda$-system, certainly not just a $\sigma$-algebra. A positive answer to this open problem should provide an alternative to Definition~\ref{def:Rfdiv} and could potentially be useful for developing the Vapnik--Chervonenkis theory for $f$-divergence in Section~\ref{sec:VC}.

\section{Conclusion}

As we have noted in Section~\ref{sec:intro}, the total variation distance plays an outsize role in traditional theoretical statistics but modern AI applications often relies on $f$-divergences other than the total variation distance. The work in this article would hopefully shed light on how one may extend classical results in statistics to other $f$-divergences.

\bibliographystyle{abbrv}

\begin{thebibliography}{10}

\bibitem{Ali1966}
S.~M. Ali and S.~D. Silvey.
\newblock A general class of coefficients of divergence of one distribution
  from another.
\newblock {\em Journal of the Royal Statistical Society. Series B
  (Methodological)}, 28(1):131--142, 1966.

\bibitem{Alimov2021}
A.~R. Alimov and I.~G. Tsar'kov.
\newblock {\em Geometric approximation theory}.
\newblock Springer Monographs in Mathematics. Springer, Cham, 2021.

\bibitem{Bishop2006}
C.~M. Bishop.
\newblock {\em Pattern recognition and machine learning}.
\newblock Information Science and Statistics. Springer, New York, 2006.

\bibitem{Bobrowski2005}
A.~Bobrowski.
\newblock {\em Functional analysis for probability and stochastic processes}.
\newblock Cambridge University Press, Cambridge, 2005.

\bibitem{bolley2007}
F.~Bolley, A.~Guillin, and C.~Villani.
\newblock Quantitative concentration inequalities for empirical measures on
  non-compact spaces.
\newblock {\em Probab. Theory Related Fields}, 137(3--4):541--593, 2007.

\bibitem{Bousquet2002}
O.~Bousquet and A.~Elisseeff.
\newblock Stability and generalization.
\newblock {\em J. Mach. Learn. Res.}, 2(3):499--526, 2002.

\bibitem{brocker2009}
J.~Br\"{o}cker.
\newblock A lower bound on arbitrary {$f$}-divergences in terms of the total
  variation, 2009.
\newblock Preprint, arXiv:0903.1765.

\bibitem{cantelli1933}
F.~P. Cantelli.
\newblock Sulla determinazione empirica delle leggi di probabilita.
\newblock {\em Giorn. Ist. Ital. Attuari}, 4:421--424, 1933.

\bibitem{Cinlar2011}
E.~\c{C}inlar.
\newblock {\em Probability and stochastics}, volume 261 of {\em Graduate Texts
  in Mathematics}.
\newblock Springer, New York, 2011.

\bibitem{Chernoff1952}
H.~Chernoff.
\newblock A measure of asymptotic efficiency for tests of a hypothesis based on
  the sum of observations.
\newblock {\em Ann. Math. Statistics}, 23:493--507, 1952.

\bibitem{choquet1953}
G.~Choquet.
\newblock Theory of capacities.
\newblock {\em Ann. Inst. Fourier (Grenoble)}, 5:131--295, 1953/54.

\bibitem{Cohn2013}
D.~L. Cohn.
\newblock {\em Measure theory}.
\newblock Birkh\"auser Advanced Texts: Basel Textbooks. Birkh\"auser/Springer,
  New York, second edition, 2013.

\bibitem{Csiszar1964}
I.~Csiszar.
\newblock Eine informationstheoretische ungleichung und ihre anwendung auf
  beweis der ergodizitaet von markoffschen ketten.
\newblock {\em Magyer Tud. Akad. Mat. Kutato Int. Koezl.}, 8:85--108, 1964.

\bibitem{Dehling2011}
H.~Dehling.
\newblock Almost sure convergence of random variables.
\newblock In M.~Lovric, editor, {\em International Encyclopedia of Statistical
  Science}, pages 32--35. Springer, Berlin Heidelberg, 2011.

\bibitem{Denneberg1994}
D.~Denneberg.
\newblock {\em Non-additive measure and integral}, volume~27 of {\em Theory and
  Decision Library. Series B: Mathematical and Statistical Methods}.
\newblock Kluwer Academic Publishers Group, Dordrecht, 1994.

\bibitem{Dudley2024}
R.~M. Dudley.
\newblock {\em Uniform central limit theorems}, volume 142 of {\em Cambridge
  Studies in Advanced Mathematics}.
\newblock Cambridge University Press, New York, second edition, 2014.

\bibitem{Dynkin1965}
E.~B. Dynkin.
\newblock {\em Markov processes. {V}ol. {II}}, volume Band 122 of {\em Die
  Grundlehren der mathematischen Wissenschaften}.
\newblock Springer-Verlag, Berlin-G\"ottingen-Heidelberg; Academic Press, Inc.,
  Publishers, New York, 1965.
\newblock Translated with the authorization and assistance of the author by J.
  Fabius, V. Greenberg, A. Maitra, G. Majone.

\bibitem{Efron1993}
B.~Efron and R.~J. Tibshirani.
\newblock {\em An introduction to the bootstrap}, volume~57 of {\em Monographs
  on Statistics and Applied Probability}.
\newblock Chapman and Hall, New York, 1993.

\bibitem{Eguchi1985}
S.~Eguchi.
\newblock A differential geometric approach to statistical inference on the
  basis of contrast functionals.
\newblock {\em Hiroshima Math. J.}, 15(2):341--391, 1985.

\bibitem{folland1999}
G.~Folland.
\newblock {\em Real Analysis: Modern Techniques and Their Applications}.
\newblock Pure and Applied Mathematics: A Wiley Series of Texts, Monographs and
  Tracts. Wiley, 1999.

\bibitem{Gilardoni2010}
G.~L. Gilardoni.
\newblock On {P}insker's and {V}ajda's type inequalities for {C}sisz\'ar's
  {$f$}-divergences.
\newblock {\em IEEE Trans. Inform. Theory}, 56(11):5377--5386, 2010.

\bibitem{glivenko1933}
V.~Glivenko.
\newblock Sulla determinazione empirica delle leggi di probabilita.
\newblock {\em Gion. Ist. Ital. Attauri.}, 4:92--99, 1933.

\bibitem{JS-goodfellow2014}
I.~J. Goodfellow, J.~Pouget-Abadie, M.~Mirza, B.~Xu, D.~Warde-Farley, S.~Ozair,
  A.~Courville, and Y.~Bengio.
\newblock Generative adversarial nets.
\newblock In Z.~Ghahramani, M.~Welling, C.~Cortes, N.~Lawrence, and K.~Q.
  Weinberger, editors, {\em Advances in Neural Information Processing Systems},
  volume~27. Curran Associates, Inc., 2014.

\bibitem{Graf1980}
S.~Graf.
\newblock A {R}adon--{N}ikod\'ym theorem for capacities.
\newblock {\em J. Reine Angew. Math.}, 320:192--214, 1980.

\bibitem{Harding1997}
J.~Harding, M.~Marinacci, N.~T. Nguyen, and T.~Wang.
\newblock Local {R}adon--{N}ikod\'ym derivatives of set functions.
\newblock {\em Internat. J. Uncertain. Fuzziness Knowledge-Based Systems},
  5(3):379--394, 1997.

\bibitem{Hellinger1909}
E.~Hellinger.
\newblock Neue {B}egr\"undung der {T}heorie quadratischer {F}ormen von
  unendlichvielen {V}er\"anderlichen.
\newblock {\em J. Reine Angew. Math.}, 136:210--271, 1909.

\bibitem{Henze2024}
N.~Henze.
\newblock {\em Asymptotic Stochastics: An Introduction with a View Towards
  Statistics}.
\newblock Mathematics Study Resources. Springer, Berlin Heidelberg, 2024.

\bibitem{Hunter2001}
J.~K. Hunter and B.~Nachtergaele.
\newblock {\em Applied analysis}.
\newblock World Scientific Publishing Co., Inc., River Edge, NJ, 2001.

\bibitem{Jeffreys1998}
H.~Jeffreys.
\newblock {\em Theory of probability}.
\newblock Oxford Classic Texts in the Physical Sciences. The Clarendon Press,
  Oxford University Press, New York, 1998.

\bibitem{kelbert2023}
M.~Kelbert.
\newblock Survey of distances between the most popular distributions.
\newblock {\em Analytics}, 2(1):225--245, 2023.

\bibitem{KL-kingma2014}
D.~P. Kingma and M.~Welling.
\newblock Auto-encoding variational bayes.
\newblock In {\em 2nd International Conference on Learning Representations
  (ICLR)}, 2014.

\bibitem{LeCam1986}
L.~Le~Cam.
\newblock {\em Asymptotic methods in statistical decision theory}.
\newblock Springer Series in Statistics. Springer, New York, 1986.

\bibitem{Renyi-li2016}
Y.~Li and R.~E. Turner.
\newblock R\'{e}nyi divergence variational inference.
\newblock In D.~Lee, M.~Sugiyama, U.~Luxburg, I.~Guyon, and R.~Garnett,
  editors, {\em Advances in Neural Information Processing Systems}, volume~29,
  pages 1081--1089. Curran Associates, Inc., 2016.

\bibitem{Lin1991}
J.~Lin.
\newblock Divergence measures based on the {S}hannon entropy.
\newblock {\em IEEE Trans. Inform. Theory}, 37(1):145--151, 1991.

\bibitem{liu2019}
A.~Liu, J.-G. Liu, and Y.~Lu.
\newblock On the rate of convergence of empirical measure in
  {$\infty$}-{W}asserstein distance for unbounded density function.
\newblock {\em Quart. Appl. Math.}, 77(4):811--829, 2019.

\bibitem{Stanislav2017}
S.~A. Molchanov.
\newblock {\it {G}eometric modeling in probability and statistics} [book review
  of {MR}3308142].
\newblock {\em Bull. Amer. Math. Soc. (N.S.)}, 55(1):109--111, 2018.

\bibitem{Morimoto1963}
T.~Morimoto.
\newblock Markov processes and the h-theorem.
\newblock {\em Journal of the Physical Society of Japan}, 18(3):328--331, 1963.

\bibitem{Narukawa2007}
Y.~Narukawa.
\newblock Distances defined by {C}hoquet integral.
\newblock In {\em 2007 IEEE International Fuzzy Systems Conference}, 2007.

\bibitem{Pearson1992}
K.~Pearson.
\newblock On the criterion that a given system of deviations from the probable
  in the case of a correlated system of variables is such that it can be
  reasonably supposed to have arisen from random sampling.
\newblock In S.~Kotz and N.~L. Johnson, editors, {\em Breakthroughs in
  Statistics: Methodology and Distribution}, pages 11--28. Springer, New York,
  1992.

\bibitem{Polyanskiy_Wu_2024}
Y.~Polyanskiy and Y.~Wu.
\newblock {\em Information Theory: From Coding to Learning}.
\newblock Cambridge University Press, 2024.

\bibitem{renyi1961}
A.~R{\'e}nyi.
\newblock On measures of entropy and information.
\newblock In {\em Proceedings of the fourth Berkeley symposium on mathematical
  statistics and probability, volume 1: contributions to the theory of
  statistics}, volume~4, pages 547--562. University of California Press, 1961.

\bibitem{Sason2016}
I.~Sason and S.~Verd\'u.
\newblock {$f$}-divergence inequalities.
\newblock {\em IEEE Trans. Inform. Theory}, 62(11):5973--6006, 2016.

\bibitem{Schilling2017}
R.~L. Schilling.
\newblock {\em Measures, integrals and martingales}.
\newblock Cambridge University Press, Cambridge, second edition, 2017.

\bibitem{Schumacher2017}
C.~Schumacher, F.~Schwarzenberger, and I.~Veseli\'c.
\newblock A {G}livenko--{C}antelli theorem for almost additive functions on
  lattices.
\newblock {\em Stochastic Process. Appl.}, 127(1):179--208, 2017.

\bibitem{shalev2014}
S.~Shalev-Shwartz and S.~Ben-David.
\newblock {\em Understanding Machine Learning: From Theory to Algorithms}.
\newblock Cambridge University Press, 2014.

\bibitem{Shorack2009}
G.~R. Shorack and J.~A. Wellner.
\newblock {\em Empirical processes with applications to statistics}, volume~59
  of {\em Classics in Applied Mathematics}.
\newblock Society for Industrial and Applied Mathematics (SIAM), Philadelphia,
  PA, 2009.

\bibitem{Shortt1984}
R.~M. Shortt.
\newblock Empirical measures.
\newblock {\em Amer. Math. Monthly}, 91(6):358--360, 1984.

\bibitem{Sugeno2013}
M.~Sugeno.
\newblock A note on derivatives of functions with respect to fuzzy measures.
\newblock {\em Fuzzy Sets and Systems}, 222:1--17, 2013.

\bibitem{Taylor1997}
J.~C. Taylor.
\newblock {\em An introduction to measure and probability}.
\newblock Springer, New York, 1997.

\bibitem{Topsoe2000}
F.~Tops{\o}e.
\newblock Some inequalities for information divergence and related measures of
  discrimination.
\newblock {\em IEEE Trans. Inform. Theory}, 46(4):1602--1609, 2000.

\bibitem{Torra2016}
V.~Torra, Y.~Narukawa, and M.~Sugeno.
\newblock On the {$f$}-divergence for non-additive measures.
\newblock {\em Fuzzy Sets and Systems}, 292:364--379, 2016.

\bibitem{Tsybakov2009}
A.~B. Tsybakov.
\newblock {\em Introduction to nonparametric estimation}.
\newblock Springer Series in Statistics. Springer, New York, 2009.

\bibitem{Vajda1970}
I.~Vajda.
\newblock Note on discrimination information and variation.
\newblock {\em IEEE Trans. Inform. Theory}, IT-16:771--773, 1970.

\bibitem{Vapnik1998}
V.~N. Vapnik.
\newblock {\em Statistical learning theory}.
\newblock Adaptive and Learning Systems for Signal Processing, Communications,
  and Control. John Wiley \& Sons, Inc., New York, 1998.

\bibitem{VapnikChervonenkis1971}
V.~N. Vapnik and A.~Y. Chervonenkis.
\newblock On the uniform convergence of relative frequencies of events to their
  probabilities.
\newblock {\em Theor. Probability Appl.}, 16(2):264--280, 1971.

\end{thebibliography}

\end{document}